\newcounter{claim-counter}
\theoremstyle{plain}
\newtheorem{theorem}{Theorem}[section] 
\newtheorem{cor}[theorem]{Corollary}
\newtheorem{corollary}[theorem]{Corollary}
\newtheorem{lem}[theorem]{Lemma}
\newtheorem{lemma}[theorem]{Lemma}
\newtheorem{prop-defi}[thm]{Definition \& Proposition}
\newtheorem{prop}[theorem]{Proposition}
\newtheorem*{thm*}{Theorem}
\newtheorem*{prop*}{Proposition}
\newtheorem*{cor*}{Corollary}
\newtheorem{proposition}[theorem]{Proposition}
\theoremstyle{definition}
\newtheorem{defi}[theorem]{Definition}
\newtheorem{definition}[theorem]{Definition}
\newtheorem{example}[theorem]{Example}
\newtheorem{notation}[theorem]{Notation}
\newtheorem{rem}[theorem]{Remark}
\newtheorem{remark}[theorem]{Remark}
\newtheorem{claim}[claim-counter]{Claim}
\newtheorem*{claim*}{Claim}
\newcommand{\NN}{{\mathbb N}}
\newcommand{\ZZ}{{\mathbb Z}}
\newcommand{\BB}{{\mathbb B}}
\newcommand{\RR}{{\mathbb R}}
\newcommand{\CC}{{\mathbb C}}
\newcommand{\F}{{\mathcal F}}
\newcommand{\E}{{\mathcal E}}
\newcommand{\A}{{\mathcal A}}
\renewcommand{\H}{\mathcal{H}}
\renewcommand{\L}{{\mathcal L}}
\newcommand{\K}{{\mathcal K}}
\renewcommand{\max}{{\operatorname{max}}}
\newcommand{\varps}{{\varepsilon}}
\newcommand{\tens}{\otimes}
\newcommand{\spann}{{\operatorname{span}}}
\newcommand{\To}{\longrightarrow}
\newcommand{\Hom}{\operatorname{Hom}}
\newcommand{\id}{\operatorname{id}}
\newcommand{\ev}{\operatorname{ev}}
\newcommand{\del}{{\partial}}
\newcommand{\Pol}{{\operatorname{Pol}}}
\renewcommand{\min}{{\operatorname{min}}}
\renewcommand{\leq}{\leqslant}
\renewcommand{\geq}{\geqslant}
\newcommand{\bbb}{{\mathbf{1}}}
 \newcommand{\mfg}{{\mathfrak{g}}}
\newcommand{\Cohom}{{\operatorname{H}}}
\newcommand{\Cohomred}{\underline{\Cohom}}
\renewcommand{\restriction}{\rvert}
\numberwithin{equation}{section}
\newcommand{\PolCohom}[5]{\operatorname{H}_{(#1) #3}^{#2}(#4,#5)}
\newcommand{\ldiff}{\operatorname{\partial}}
\newcommand{\rdiff}{\operatorname{\scalebox{-1}[1]{$\partial$}}}
\newcommand{\minusdot}{\dot{-}}
\newcommand{\plusdot}{\dot{+}}
\newcommand{\multiindex}[1]{\mathbf{#1}}
\newcommand{\Multiindex}[3]{\multiindex{#1}_{#2,#3}}
\newcommand{\Bultiindex}[2]{\multiindex{#1}_{#2}}
\newcommand{\multiclass}{\operatorname{\mathbf{cl}}}
\newcommand{\multirank}{\operatorname{\mathbf{rk}}}
\newcommand{\multidim}{\operatorname{\mathbf{dim}}}
\newcommand{\inv}{{\operatorname{inv}}}
\declaretheorem[style=theorem,name={Theorem}]{theoremletter}
\newtheorem{introcorollary}[theoremletter]{Corollary}
\title{Polynomial cohomology and polynomial maps on nilpotent groups}
\author{David Kyed}
\address{David Kyed, Department of Mathematics and Computer Science, University of Southern Denmark, Campusvej 55, DK-5230 Odense M, Denmark}
\email{dkyed@imada.sdu.dk}
\author{Henrik Densing Petersen}
\subjclass[2010]{57M07, 22E41, 22E25} 
\keywords{Locally compact groups, nilpotent groups, group  cohomology, polynomials}
\begin{document}

\begin{abstract}
We  introduce a refined version of group cohomology  and relate it  to the space of polynomials on the group in question. We show that the polynomial cohomology with trivial coefficients admits a description in terms of ordinary cohomology with polynomial coefficients, and that the degree one polynomial cohomology with trivial coefficients admits a description directly in terms of polynomials.  Lastly, we give a complete description of the polynomials on a connected, simply connected nilpotent Lie group  by showing that these are exactly the maps that pull back to classical polynomials via the exponential map.

\end{abstract}

\maketitle
%\tableofcontents

\section{Introduction}
Group cohomology is by now a standard tool with a wide range of applications spanning from finite to locally compact groups and across a variety of disciplines, including differential geometry,  ergodic theory, topology and operator algebras. 
The aim of the present paper is to introduce a refined version of group cohomology, dubbed polynomial cohomology, which consists of a family of functors $\Cohom_{(d)}^n(G, -)$ for which the case $d=1$ corresponds to the ordinary cohomology $\Cohom^n(G, -)$ of the group $G$ in question. As the name suggests, this cohomology theory is intimately linked with polynomials  on  groups (see Section \ref{sec:polynomial_maps_on_groups} for definitions), a notion that dates back to (at least)  the works of Passi from the 1960's \cite{Passi-I, Passi-II, Passi-functor}. Passi's work  is primarily concerned with polynomials  on discrete abelian groups, and already in his works the connection to cohomology theory appears, in that he obtains information about  circle-valued 2-cohomology of abelian groups as a consequence of his results \cite[Theorem 4.1]{Passi-II}. Polynomial maps also appear in the work of Buckley \cite{Buckley-1} regarding nilpotency of wreath products, and later in the work of Leibman \cite{Leibman:Polynomial_mappings} where emphasis is also on the case of nilpotent groups.
The setting of the present paper will be that of  locally compact, second countable groups
and in the case of trivial coefficients the relationship between polynomial cohomology and actual polynomials is made precise by means of the following theorem.

\begin{theoremletter}[see Proposition \ref{prop:PolCohom_linear_description} and \ref{prop:HdP1_R_description}]\label{thm:A}
Let $G$ be a locally compact, second countable group and  denote by $\Pol_d(G)$ the space of polynomials on $G$ of degree at most $d$.  
Then   there exists an isomorphism of topological vector spaces
\[
\Cohom_{(d)}^n\left(G,\RR\right) \simeq \Cohom^n\left(G, \Pol_{d-1}(G)\right),  \ \text{ for all } n\in \NN_0.
\]
Moreover, in degree 1 there exists an isomorphism of topological vector spaces 
\[
\Cohom_{(d)}^1\left(G,\RR\right)   \simeq \Pol_d(G)/\Pol_{d-1}(G).
\]
\end{theoremletter}
In the case where $G$ is a discrete abelian group,  $ \Pol_d(G)/\Pol_{d-1}(G)$ was actually studied from a functorial point of view already by Passi \cite{Passi-functor} although it was not  considered as a cohomology theory in the sense of the present paper. As a consequence of Theorem \ref{thm:A} we deduce the following:

\begin{introcorollary}[See Corollary \ref{cor:cohom_fin_dim_implies_pol_fin_dim}]
If $G$ is a cohomologically finite dimensional, locally compact, second countable group then $\Pol_d(G)$ is finite dimensional for each $d\in \NN_0$.
\end{introcorollary}
Here $G$ is called  \emph{cohomologically finite dimensional} if its cohomology groups with coefficients in finite dimensional $G$-vector spaces are finite dimensional themselves, and we remark that this for instance includes  finitely generated discrete groups whose classifying space is a finite CW complex as well as connected simply connected nilpotent Lie groups.\\

Often the polynomial cohomology captures no new information about the group (this is for instance the case if the group has compact abelianization; cf.~Remark \ref{rem:compact-abelianization}), but for nilpotent groups we show that  the situation is quite different:

\begin{theoremletter}[see Theorem \ref{thm:pol_Malcev_description} \& Remark \ref{rem:exp-description}]\label{thm:B} 
For a connected, simply connected, nilpotent Lie group $G$ the  polynomials on $G$ are exactly the functions that pull back to ordinary polynomials on the Lie algebra of $G$ via the exponential map.
\end{theoremletter}

%To be very precise, the statement in Theorem \ref{thm:B} is that $\xi\colon G\to \RR$ is a polynomial if and only if for some/any choice of linear basis of the Lie algebra $\mathfrak{g}:=T_eG$, the composed map 
%\[
%\RR^{\dim(G)} \simeq \mathfrak{g} \overset{\exp}{\To} G \overset{\xi}{\To} \RR
%\]
 %is a classical polynomial, where $\RR^{\dim(G)} \simeq \mathfrak{g}$ is the linear isomorphism corresponding to the choice of basis. 
In the setting of Theorem \ref{thm:B},   by considering $G$ as the set of real points of a linear algebraic group, this shows, in particular, that the space of polynomials in this case is nothing but the classical set of regular functions on the algebraic group in question (see also Remark \ref{rm:alg-geom-rem}).
%Theorem \ref{thm:A} and Theorem \ref{thm:B}   therefore show  that, adorned with the additional data coming from the degree of the polynomials, the  classical algebra oregular functions can  be described naturally via the cohomological functors $\Cohom_{(-)}^{1}(-,\RR)$.  
%The set of polynomials on a connected, simply connected Lie group is known to be a Hopf algebra (see Section \ref{sec:algebra_pol} for details) and using Theorem \ref{thm:B}, we deduce that this is a complete invariant of the Lie group in question:
Using Theorem  \ref{thm:B}, we also  deduce  that the space of all polynomials on a connected simply connected Lie group $G$ is a Hopf algebra and that this is a complete invariant of the Lie group in question:

\begin{introcorollary}[See Theorem \ref{thm:hom_existence}]\label{cor:intro-cor}
Let $G$ and $H$ be %residually Mal'cev groups
connected, simply connected, nilpotent Lie groups
 and suppose that $\Psi \colon \operatorname{Pol}(G) \rightarrow \operatorname{Pol}(H)$ is a Hopf algebra homomorphism. Then there is a unique continuous group homomorphism $\varphi\colon H \rightarrow G$ such that $\Psi$ is induced by $\varphi$, and $\varphi$ is an isomorphism if and only if $\Psi$ is. %a properly degree-preserving isomorphism.
%bijective.
\end{introcorollary}
We note that the result in Corollary \ref{cor:intro-cor} is not new, in the sense that Theorem \ref{thm:B}  allows us to think of $\Pol(G)$ as the algebra of regular functions on $G$ when the latter is considered as the set of  real points of an algebraic group, and after adapting this point of view,   the result in Corollary \ref{cor:intro-cor} is then 
 a classical fact in algebraic geometry (cf.~\cite[Chapter 1]{Borel-GTM}). Note, however, that the proof of  Corollary \ref{cor:intro-cor} provided in Section \ref{sec:algebra_pol} makes no (explicit) use of algebraic geometry, and it is our hope that this will make the result accessible to a different audience.

 %The proof, however, does not make use of any algebraic geometry, and we hope that some readers will find this

\subsection*{Acknowledgements}
We are grateful to Nicolas Monod and Ryszard Nest for several interesting discussions, and for their hospitality and generosity. We would also like to thank Yves Cornulier and Peter Schlicht for a number of useful comments. The first named author gratefully acknowledges the financial support from the Lundbeck foundation (grant R69-A7717), the Villum foundation (grant 7423) and DFF (grant 7014-00145B). Lastly, we would like to thank the anonymous referee for a very thorough reading of the paper, for pointing out a number of relevant references and for numerous comments and corrections that significantly improved the exposition of the paper.

\section{Notation and conventions}\label{sec:notation}

\subsubsection*{Topological vector spaces} 
Unless explicitly stated otherwise, all generic topological vector spaces are implicitly assumed to be Hausdorff. Our primary need for treating non-Hausdorff topological vector spaces stems from the fact that the cohomology of a topological group is (generally) a non-Hausdorff topological vector space, but this will not lead to any confusion, as it will always be clear from the context whether or not the space in question is (assumed) Hausdorff.
A \emph{morphism} $\varphi \colon \mathcal{E} \rightarrow \mathcal{F}$ between (not necessarily Hausdorff) topological vector spaces $\mathcal{E}$ and $\mathcal{F}$ is a continuous linear map, and an \emph{isomorphism} is a morphism such that there exists an inverse morphism $\varphi^{-1}\colon \mathcal{F}\rightarrow \mathcal{E}$.

\subsubsection*{Topological groups} The term `group'  will always mean an abstract group without any topology, and we will follow the standard convention and abbreviate `locally compact second countable  by `lcsc'.%, and whenever $G$ is a lcscu group we will denote by $\lambda_G$ a choice of (bi-invariant) Haar measure on it. 
 We denote the identity element in a group $G$ by $\mathbf{1}_G$, leaving out the subscript whenever this does not lead to ambiguity. Lastly, we will denote the inversion map $g\mapsto g^{-1}$ by $\inv\colon G\to G$ whenever notationally convenient, and  the center of $G$ by $Z(G)$. The product map is occasionally denoted by $m\colon G\times G \to G$ and the map $(g,h) \mapsto gh^{-1}$ by $\tilde{m}$.

\subsubsection*{Topological $G$-modules} Let $G$ be a lcsc group. By a \emph{topological (or continuous) $G$-module} we shall mean a (Hausdorff) topological vector space $\E$ over either $\RR$ or $\CC$ together with an action of $G$ by invertible linear maps such that the action map $G\times \E \to \E $ is continuous. Note that when $\E$ is a Hilbert space this, a priori quite strong continuity requirement,  coincides with the more familiar notion of a strongly continuous $G$-action \cite[Lemme D8]{Guichardet:Cohomologie}.
A \emph{morphism} of topological $G$-modules is a morphism of the underlying topological vector spaces which intertwines their respective $G$-actions.

\subsubsection*{Extended natural numbers} We denote by $\NN_0$ the set $\NN\cup \{0\}$, and, following Leibman \cite[3.3]{Leibman:Polynomial_mappings}, we denote  the set $\{ -\infty \} \cup \mathbb{N}_0$ by $\mathbb{Z}_*$ and define
\begin{align*}
x\plusdot y &:= \left\{ \begin{array}{cc} x+y, & x,y\in \mathbb{N}_0 \subseteq \mathbb{Z}_* \\ -\infty, & \textrm{if either of } \, x,y = -\infty \end{array} \right. \\
x\minusdot y& := \left\{ \begin{array}{cc} x-y, & x\geq y\in \mathbb{N}_0 \subseteq \mathbb{Z}_* \\ -\infty, & x = -\infty \textrm{ or } x < y \in \mathbb{N}_0\end{array} \right.
\end{align*}
We leave $x\minusdot y$ undefined if $y=-\infty$.

\subsubsection*{Multi-index notation}
Let  $I=\{i_1,\dots, i_l\}$  be a finite set endowed with a fixed total order such that $i_1<i_2< \dots < i_l$.  By a \emph{multi-index} over a $I$
 %$I=\{i_1,\dots, i_l\}$ (which will always be assumed endowed with a fixed total order)
 we mean  an element 
 \[
 \multiindex{d} = (d_i)_{i\in I}=(d_{i_1},\dots, d_{i_l}) \in \mathbb{N}_0^I. 
 \]
 For a Mal'cev group $G$ (see Section \ref{sec:Malcev} for the definition of Mal'cev groups; in particular this includes connected, simply connected, nilpotent Lie groups) we denote by $\multiclass(G)$ the multi-index $(1,\dots ,\operatorname{cl}(G))\in \mathbb{N}_0^{\operatorname{cl}(G)}$ and by $\multirank(G)$ the multi-index $(\dim_{\mathbb{R}} \mathfrak{g}_{[i]}/\mathfrak{g}_{[i+1]})_{i=1,\dots ,\operatorname{cl}(G)} \in \mathbb{N}_0^{\operatorname{cl}(G)}$. For a multi-index $\multiindex{k}$ over $I$ we denote by $\mathbb{N}_0^{\multiindex{k}}$ the product set $\prod_{i\in I} \mathbb{N}_0^{k_i}= \mathbb{N}_0^{k_{i_1}}\times \dots \times \mathbb{N}_0^{k_{i_l}}$, and for a Mal'cev group $G$ we denote by $\multidim(G)$ the multi-index (where we write $\multiindex{m}:=\multirank(G)$)
\begin{equation}
\multidim(G) := ( (1)_{j=1,\dots ,m_1},\dots ,(\operatorname{cl}(G))_{j=1,\dots ,m_{\operatorname{cl}(G)}} ) \in \mathbb{N}^{\multiindex{m}}_0. \notag
\end{equation}
For any $d\in \mathbb{N}_0$ and any multi-index $\multiindex{k}$ over $I$ we define
\begin{equation}
\Multiindex{D}{d}{\multiindex{k}} := \Big\{ \multiindex{d}\in \mathbb{N}_0^I \mid  \sum_{i\in I} k_id_i \leq d \Big\}, \notag
\end{equation}
and denote by $\Multiindex{D}{d}{\multiindex{k}}^{=}$ the subset for which equality holds. Finally, we set
\begin{equation}
\Bultiindex{B}{\multiindex{k}} := \dot{\bigcup}_{i\in I} \{ 1,2,\dots ,k_i \}, \notag
\end{equation}
(disjoint union) and consider on this set the lexicographic order. 

\subsubsection*{Product notation}
For a group $G$, a finite (totally) ordered set $I=\{i_1, \dots, i_l\}$ with $i_1< \dots < i_l$ and a map $I \ni i \mapsto g_i \in G $ we write $\prod_{i\in I} g_i$ for the element 
$g_{i_1}g_{i_2}\cdots g_{i_l}\in G$.
%$g$ in $G$ defined recursively by $g = g_{i_0}\cdot \prod_{i\in I\setminus \{ i_0\}} g_i$, where $i_0$ is the smallest element in $I$. In other words, if $I=\{i_1, \dots, i_n\}$ with $i_1<i_2< \cdots< i_l$ then $\prod_{i\in I} g_i:=g_{i_1}g_{i_2}\cdots g_{i_l}$.

\section{Polynomial cohomology of lcsc groups} \label{sec:Polynomial_cohomology}
In this section we recall the definition of continuous cohomology for locally compact groups,  and define, more generally, a notion of \emph{polynomial cohomology}, for which the ``linear'' (or degree one) case coincides with the usual cohomology.

%\subsection{Basic definitions}

\begin{definition}[strengthened morphism] \label{def:strengthened_morphism}
A morphism $v\colon \E\to \F$ between topological $G$-modules is said to be \emph{strengthened} if there exists a morphism of topological vector spaces $\eta\colon \F\to \E$ such that $ v \circ \eta \circ v = v$. 
\end{definition}
We emphasize that it is \emph{not} part of the definition that the map $\eta$ be $G$-equivariant. The definition of a strengthened morphism just given might not be completely standard, but is easily seen to be equivalent with the one used e.g.~in \cite[Chapter III \& Appendix D]{Guichardet:Cohomologie}; in particular, for injective morphisms, being strengthened is the same as being left invertible in the category of topological vector spaces.

\begin{definition}[relative injectivity] \label{def:relatively_injective}
A continuous $G$-module $\mathcal{E}$ is said to be \emph{relatively injective} if, given any diagram
\begin{equation}
\xymatrix{ 0 \ar[r] & \mathcal{F}_1 \ar[d]^v \ar[r]^u & \mathcal{F}_2 \ar@{-->}[dl]^{\exists  w} \\ & \mathcal{E} } \notag
\end{equation}
where $u\colon \mathcal{F}_1 \rightarrow \mathcal{F}_2$ is a strengthened injective morphism, there exists a morphism $w\colon \mathcal{F}_2 \rightarrow \mathcal{E}$ such that the augmented diagram commutes.
\end{definition}

%\begin{observation}
For $G$ a lcsc group and $X$ a locally compact space on which $G$ acts continuously by homeomorphisms, the space of continuous functions $C(X,\mathcal{E})$ is a continuous $G$-module for every continuous $G$-module $\mathcal{E}$, when endowed the standard action $(g.f)(x) = g.f(g^{-1}.x)$. Recall that the topology on $C(X,\mathcal{E})$ is the projective topology generated by the restriction maps $C(X,\mathcal{E}) \rightarrow C(K,\mathcal{E})$ over all compact subsets $K$ of $X$, that is, the topology of uniform convergence on compact sets. In particular, note that if $X$ is second countable and $\mathcal{E}$ is a Fr{\'e}chet space (as will often be the case in this paper), then $C(X,\mathcal{E})$ is a Fr{\'e}chet space as well.
%\end{observation}

\begin{lemma}[{\cite[III, Proposition 1.2]{Guichardet:Cohomologie}}]
Let $G$ be a lcsc group and $\mathcal{E}$ be a continuous $G$-module. Then $C(G,\mathcal{E})$ is relatively injective. In particular, the category of continuous $G$-modules contains sufficiently many relatively injectives. Further, there is a strengthened, relatively injective resolution 
%(with the usual coboundary maps \cite{Guichardet:Cohomologie})
\begin{equation}
\xymatrix{ 0\ar[r] & \mathcal{E} \ar[r]^>>>>>{\varps} & C(G,\mathcal{E}) \ar[r]^>>>>{d^0} & C(G\times G,\mathcal{E}) \ar[r]^>>>>{d^1} & \cdots \ar[r]^>>>>{d^{n-1}} & C(G^{n+1},\mathcal{E}) \ar[r]^>>>>{d^n} & \cdots }.\notag
\end{equation}
where the coboundary maps are given by $\varps(\xi)(g):=\xi$ and 
\begin{align}\label{standard-resolution-coboundary-map}
d^n(f) (g_0,\dots, g_{n+1}) &:= \sum_{i=0}^{n+1} (-1)^i f(g_0,\dots, \hat{g}_i,\dots, g_{n+1}),
\end{align}
where the symbol  $\hat{g}_i$ denotes omission of the element $g_i$.
\end{lemma}

%\begin{lemma}[{\cite[Chapter III]{Guichardet:Cohomologie}}] \label{lma:cochain_lifting}
%Let $G$ be a lcsc group and let $\mathcal{E}, \mathcal{F}$ be continuous $G$-modules. Then for any morphism $\varphi\colon \mathcal{E} \rightarrow \mathcal{F}$ and any two relatively injective strengthened resolutions $\xymatrix{ 0 \ar[r] & \mathcal{E} \ar[r] & (\mathcal{E}^{\bullet}) }$ and $\xymatrix{ 0 \ar[r] & \mathcal{F} \ar[r] & (\mathcal{F}^{\bullet}) }$ there is a lift $\varphi^{\bullet} \colon \mathcal{E}^{\bullet} \rightarrow \mathcal{F}^{\bullet}$ to a (continuous) morphism of complexes of $G$-modules, which is unique up to $G$-equivariant continuous cochain homotopy.
%\end{lemma}

\begin{definition}[differential notation and higher order invariants]\label{def:differential_notation}
Let $G$ be a lcsc group and $\mathcal{E}$ be a continuous $G$-module. For $g\in G$, we denote by $\ldiff_g\colon \E\to \E $ the continuous, linear map $\xi \mapsto g.\xi-\xi$, and for $d\in \mathbb{N}$ we define the \emph{$d$'th order invariants} in $\E$ as
\begin{equation}
\mathcal{E}^{G(d)} := \{ \xi \in \mathcal{E} \mid \forall g_1,\dots , g_d \in G : \ldiff_{g_1} \circ \cdots \circ \ldiff_{g_d} (\xi) = 0 \} .\notag
\end{equation}
\end{definition}
Note that $\mathcal{E}^{G(d)}$ is the pre-image under the quotient map $\E\to \E/\E^{G(d-1)}$ of the subspace $\left( \mathcal{E} / \mathcal{E}^{G(d-1)} \right)^G$ --- an observation we will be using frequently (without reference) in the sequel.\\
The relation $ (g_1-\bbb)\cdots (g_d-\bbb)(gx)= g(g^{-1}g_1g-\bbb)\cdots (g^{-1}g_dg-\bbb)x $ shows that $\E^{G(d)}$ is a (closed) $G$-invariant subspace in $\E$ and hence $(-)^{G(d)}$ defines an endo-functor on the category of topological $G$-modules, which recovers the classical invariants functor when $d=1$.
Furthermore it is easy to see that $(-)^{G(d)}$ is left exact, and thus has well defined right-derived functors, and these are the object of study in this section. We spell out this construction by means of the following:

\begin{definition}[continuous polynomial cohomology]
Let $G$ be a lcsc group and let $\mathcal{E}$ be a continuous $G$-module. For  $d\in \mathbb{N}$, we define the \emph{$d$-th order continuous polynomial cohomology of $G$ with coefficients in $\mathcal{E}$} as
\begin{equation}
\PolCohom{d}{n}{}{G}{\mathcal{E}} := \frac{\ker \left(d^n \vert_{\mathcal{E}_n^{G(d)}} \right)} {\operatorname{im} \Big( d^{n-1} \vert_{\mathcal{E}_{n-1}^{G(d)}} \Big)}, \quad n\in \mathbb{N}_0, \notag
%\PolCohom{d}{n}{}{G}{\mathcal{E}} := {\ker \left(d^n \vert_{\mathcal{E}_n^{G(d)}} \right)}\Big/ {\operatorname{im} \left( d^{n-1} \vert_{\mathcal{E}_{n-1}^{G(d)}} \right)}, \quad n\in \mathbb{N}_0, \notag
\end{equation}
where $\xymatrix{ 0 \ar[r] & \mathcal{E} \ar[r] & (\mathcal{E}_{\bullet}, d^{\bullet}) }$ is any strengthened, relatively injective resolution of $\mathcal{E}$. 
The space $\ker (d^n \vert_{\mathcal{E}_n^{G(d)}} )$ is denoted by $\operatorname{Z}^n_{(d)}(G,\E)$ and referred to as the space of \emph{homogeneous (degree $d$) polynomial $n$-cocycles} and the space
$\operatorname{im} ( d^{n-1} \vert_{\mathcal{E}_{n-1}^{G(d)}} )$ is denoted by $\operatorname{B}^n_{(d)}(G,\E)$ and referred to as the space of \emph{homogeneous (degree $d$) polynomial $n$-coboundaries}. 
\end{definition}
The left-exactness of $(-)^{G(d)}$ combined with standard arguments in relative homological algebra  (cf.~\cite[III, Corollaire 1.1]{Guichardet:Cohomologie}) implies that the polynomial cohomology $\PolCohom{d}{n}{}{G}{\mathcal{E}}$ is indeed well  defined as a (generally non-Hausdorff) topological vector space; that is, using different relatively injective, strengthened resolutions to compute $\PolCohom{d}{n}{}{G}{\mathcal{E}}$ yields bijective, bi-continuous, linear maps between the resulting polynomial cohomology spaces. \\

When $d=1$, one sees that we recover the ordinary cohomology of $G$ (see e.g.~\cite[III]{Guichardet:Cohomologie}) which we will denote by $\Cohom^n(G,\E)$, as is more standard. Moreover, we will denote by $\underline{\Cohom}^n(G,\E)$ the \emph{reduced} cohomology of $G$; i.e.~the maximal Hausdorff quotient of the topological vector space $\Cohom^n(G,\E)$. Before we continue our investigation, we record a definition that will be needed in the sections to follow.

\begin{definition} \label{def:cohom_fin_dim}
We say that a lcsc group $G$ is \emph{cohomologically finite dimensional} if $\Cohom^n(G, \mathcal{E})$ is finite dimensional for every $n \in \mathbb{N}_0$ whenever $\E$  is a finite dimensional continuous $G$-module.
  \end{definition}
Note that any finitely generated discrete group whose classifying space is a finite  CW complex is an example of a cohomologically finite dimensional group. So are connected Lie groups (e.g.~by the van Est theorem \cite[III, Corollaire 7.2]{Guichardet:Cohomologie}).

\begin{remark}\label{rem:compact-abelianization}
A direct computation shows that, given any topological $G$-module $\mathcal{E}$ and any $\xi\in \mathcal{E}^{G(2)}$, the map $g\mapsto \ldiff_g.\xi$ is a continuous homomorphism from $G$ to $\mathcal{E}$. Thus, if $G$ has compact abelianization we conclude that $\mathcal{E}^{G(2)} = \mathcal{E}^G$ for every topological $G$-module $\mathcal{E}$, and inductively that $\mathcal{E}^{G(d)} = \mathcal{E}^G$ for all  $d$. Hence, for such $G$, the continuous polynomial cohomology coincides with the ordinary continuous cohomology in the sense that $\Cohom^n_{(d)}(G,\E)= \Cohom^n(G,\E)$ for all $d\in \NN$.

\end{remark}

\subsection{Polynomial cohomology in terms of ordinary cohomology}\label{sec:taut-self-coupling}
Our next aim is the following proposition which 
 gives a description of polynomial cohomology in terms of ordinary cohomology.  In the statement we  write $\operatorname{Pol}_{d-1}(G)$ for the space $C(G,\mathbb{R})^{G(d)}$, where the $G(d)$-invariants are taken  with respect to the right-regular representation; see  Section \ref{sec:polynomial_maps_on_groups} below, for an explanation of this terminology.

\begin{proposition} \label{prop:PolCohom_linear_description}
Let $G$ be a lcsc group and let $d\in \mathbb{N}$. Then:
\begin{enumerate}[(i)]
\item There are isomorphisms $\tau^{\bullet}\colon \Cohom^\bullet (G, \operatorname{Pol}_{d-1}(G)) \overset{\sim}{\To}\PolCohom{d}{\bullet}{}{G}{\mathbb{R}}$, given on continuous cochains by
\begin{equation}
(\tau^n\xi)(g_0,\dots ,g_n) = \xi(g_0,\dots ,g_n)(\bbb),\notag
\end{equation}
and with inverse defined, also at the level of continuous cochains, by
\begin{equation}
(\tau^n)^{-1}(\xi)(g_0,\dots ,g_n)(t) = \xi( t^{-1}g_0,\dots ,t^{-1}g_n ). \notag
\end{equation}

\item More generally, let $G_1=G_2=G$ 
 and let  $\mathcal{E}$ be a continuous $G$-module. Considering $C(G,\E)$ as a $G_1\times G_2$-module with the action $(g_1,g_2).f(g) := g_1.f(g_1^{-1}gg_2)$ there is 

 an  isomorphism
\begin{equation}\label{linear-description-iso-2}
\chi^{\bullet} \colon \Cohom^{\bullet}\big(G_1,C(G,\mathcal{E})^{G_2(d)}\big) \overset{\sim}{\To} 
\Cohom_{(d)}^\bullet \left(G_2, C(G,\E)^{G_1} \right)= \PolCohom{d}{\bullet}{}{G}{\mathcal{E}}.
\end{equation}

\end{enumerate}
\end{proposition}

\begin{proof}
We first prove (ii). For the sake of clarity, denote by  $X$ a third copy of $G$ and write the coefficient module as $C(X,\E)$. Now define two complexes $(C^n, d_C^n)_{n\in \NN_0}$ and $(D^n, d_D^n)_{n\in \NN_0}$ of $G_1\times G_2$-modules as follows:
\[
C^n:=C(G_1^{n+1}, C(X,\E)) \ \text{ and } \ D^n:=C(G_2^{n+1}, C(X,\E)),
\]
with $G_1\times G_2$ actions
\begin{align*}
((g,h).f)(g_0,\dots, g_n)(x)&:= g(f(g^{-1}g_0,\dots, g^{-1}g_n )(g^{-1}xh)), \ f\in C^n;\\
((g,h).f)(h_0,\dots, h_n)(x)&:=g(f(h^{-1}h_0,\dots, h^{-1}h_n )(g^{-1}xh)), \ f\in D^n.
\end{align*}
In both cases, the coboundary maps are the standard inhomogeneous ones; i.e.~
\[
d_C^n(f)(g_0,\dots, g_{n+1})=\sum_{i=0}^{n+1} (-1)^i f(g_0,\dots, \hat{g}_i, \dots, g_{n+1}),
\]
and similarly for $d^n_D$. Augmenting $(C^\bullet, d_C^\bullet)$ with $\varps_C\colon C(X,\E)\to C^0$ given by $\varps_C(\xi)(g):=\xi$ and similarly for $(D^\bullet,d_D^\bullet)$ we obtain two complexes of $G_1\times G_2$-modules:
\begin{align}
0 &\to C(X,\E) \to (C^\bullet, d_C^\bullet)\label{C-reso}\\
0 &\to C(X,\E) \to (D^\bullet, d_D^\bullet)\label{D-reso}
\end{align}
Considering $C^n$ as a $G_2$-module, the action is only from the right in the $X$-variable, so
\[
(C^n)^{G_2(d)}=C\left(G_1^{n+1}, C(X,\E)^{G_2(d)}\right).
\]
Thus, as a complex of $G_1$-modules, $((C^n)^{G_2(d)}, d^n_C\restriction)$ is exactly the standard, relative injective resolution of the $G_1$-module $C(X,\E)^{G_2(d)}$, and we have therefore proved:
\begin{claim}\label{claim:first-claim}
Upon passing to $G_1$-invariants and cohomology, the complex $((C^n)^{G_2(d)}, d^n_C\restriction)$  computes $\Cohom^n(G_1, C(X,\E)^{G_2(d)})$. 
\end{claim}
\noindent Similarly, passing to $G_1$-invariants in the \eqref{D-reso} we see that 
\[
(D^n)^{G_1}=C(G_2^{n+1}, C(X,\E)^{G_1}),
\]
and hence, as a complex of $G_2$-modules,  $((D^\bullet)^G, d_D^\bullet\restriction)$ is the standard,  relatively injective resolution of the $G_2$-module $C(X,\E)^{G_1}$. The latter $G_2$-module identifies with $\E$ (as a $G$-module) via the map $\alpha\colon \E \to C(X,\E)^{G_1}$ given by $\alpha(\xi)(x):=x\xi$. %with inverse given by $\ev_\bbb$.
This proves:
\begin{claim}\label{claim:second-claim}
Upon passing to $G_2(d)$-invariants and cohomology, the complex $((D^n)^{G_1}, d^n_D\restriction)$ computes $\Cohom^n_{(d)}(G, \E)$.
\end{claim}
\noindent Lastly we want to relate the two complexes; this is done by means of
\begin{claim}\label{claim:third-claim}
The map $\kappa^n\colon C^n \to D^n$ given by $\kappa^n(f)(h_0,\dots, h_n)(x):=f(xh_0,\dots, x h_n)(x)$ is an isomorphism of $G_1\times G_2$-complexes.
\end{claim}
\noindent To see this, we first note that a direct computation shows that $\kappa^\bullet$ is indeed a map of complexes commuting with the $G_1\times G_2$-actions, and that the map $(\kappa^n)^{-1}\colon D^n\to C^n$ given by $(\kappa^n)^{-1}(f)(g_0,\dots, g_n)(x)=f(x^{-1}g_0,\dots, x^{-1}g_n)(x)$ is its inverse. 
Thus, by composition we get an isomorphism: 
\begin{align*}
\tau^n\colon  \Cohom^n\left(G_1, C(X,\E)^{G_2(d)}\right)&= \Cohom^n\left(\left((C^\bullet)^{G_2(d)}\right)^{G_1},d^\bullet_C\restriction\right) \tag{Claim \ref{claim:first-claim}} \\
& \overset{\kappa^\bullet}{\simeq}\Cohom^n\left(\left((D^\bullet)^{G_2(d)}\right)^{G_1},d^\bullet_D\restriction\right) \tag{Claim \ref{claim:third-claim}}  \\
&= \Cohom^n\left(\left((C^\bullet)^{G_1}\right)^{G_2(d)},d^\bullet_D\restriction\right) \\
&=\Cohom^n_{(d)}(G,\E). \tag{Claim \ref{claim:second-claim}}
\end{align*}
This proves (ii), and to obtain (i) we simply put $\E=\RR$ and note that for an inhomogeneous cochain
\[
\xi\colon G^{n+1} \to \Pol_{d-1}(G)=C(X,\RR)^{G_2(d)}
\] 
the class $\tau^n([\xi])$ is represented by the cocycle 
\[
\ev_\bbb\circ \kappa(\xi)(h_0,\dots, h_n)=\xi(h_0,\dots, h_n)(\bbb).
\]
Conversely, for an inhomogeneous polynomial cocycle $\xi\colon G^{n+1} \to \RR=C(X,\RR)^{G_2}$  the image under $(\tau^n)^{-1}$ is represented by the inhomogeneous cocycle 
\[
(\kappa^n)^{-1} (\xi)\in C\left(G_1^{n+1}, C(X,\RR)^{G_2(d)}\right)=C\left(G^{n+1}, \Pol_{d-1}(G)\right)
\]
given by
%\begin{align*}
$
(\kappa^n)^{-1} (\xi)(g_0,\dots, g_n) (x)= \xi(x^{-1}g_0,\dots, x^{-1}g_n)(x).
$
%\end{align*}
\end{proof}

\subsection{Inhomogeneous polynomial 1-cocycles}
In this section we give a different picture of polynomial 1-cohomology, analogous to the picture of ordinary cohomology in terms of inhomogeneous cocycles. 
This, in turn, will allow us to describe the first polynomial cohomology with trivial coefficients concretely in terms of polynomial maps, which is done in Section \ref{sec:polynomial_maps_on_groups} below.\\

Let $G$ be a lcsc group, $\mathcal{E}$ a continuous $G$-module, and consider the standard relatively injective resolution introduced in Section \ref{sec:Polynomial_cohomology}.
\begin{equation}
\xymatrix{ 0\ar[r] & \mathcal{E} \ar[r]^<<<<{d^{-1}} & C(G,\mathcal{E}) \ar[r]^{d^0} & C(G^2,\mathcal{E}) \ar[r]^{d^1} & C(G^3,\mathcal{E}) \ar[r] & \cdots }.\notag
\end{equation}
For functions $\xi\colon G\rightarrow \mathcal{E}$ we  define  the \emph{unitized difference operator} (see also Definition \ref{def:differential_notation} for notation) by
\begin{equation}
(\bar{\ldiff}_g\xi)(h) := (\ldiff_g\xi)(h) - (\ldiff_g\xi)(\bbb).\notag
\end{equation}

\begin{lem}\label{lem:diff-og-diff-bar}
For $\xi\in C(G,\E)$  and $d\in \NN$ the following are equivalent:
\begin{itemize}
\item[(i)] For all $g_1,\dots, g_d$ one has $ \bar{\del}_{g_1}\circ\cdots \circ \bar{\del}_{g_d}\xi=0 $
\item[(ii)] For all $g_1,\dots, g_d$ one has that $ {\del}_{g_1}\circ\cdots \circ {\del}_{g_d}\xi $ is a constant function into $\E$.
\end{itemize}
\end{lem}
\begin{proof}
For $d=1$, we have $\bar{\del}_{g_1}\xi:=\del_{g_1}\xi - \del_{g_1}\xi(\bbb)$, so if $\bar{\del}_{g_1}\xi=0$ then clearly $\del_{g_1}\xi$ is constant,
%(equal to $r_1=\del_{g_1}(\xi)(\bbb)$ 
and, conversely, if $\del_{g_1}\xi$ is constant then it equals ${\del}_{g_{1}}\xi(\bbb)$, so $\bar{\del}_{g_1}(\xi)=0$. For the general case, one first observes that
\begin{align*}
\del_{g_1}\circ \cdots \circ \del_{g_d}(\xi) &= \del_{g_1}\circ \cdots \circ \del_{g_{d-1}}\left(\del_{g_d}(\xi)-\del_{g_d}(\xi)(\bbb) +\del_{g_d}(\xi)(\bbb)   \right)\\
&= \del_{g_1}\circ \cdots \circ \del_{g_{d-1}}(\bar{\del}_{g_d}(\xi)) + \underbrace{\del_{g_1}\circ \cdots \circ \del_{g_{d-1}}(  \del_{g_d}(\xi)(\bbb) )}_{\text{constant as function into $\E$}}  \\
\end{align*}
and by iterating this argument we see that $\ldiff_{g_1}\circ \cdots \ldiff_{g_n}(\xi)$ and  $\bar{\ldiff}_{g_1}\circ \cdots \circ \bar{\ldiff}_{g_d}(\xi)$ differ by a constant function. Thus, if 
$\bar{\ldiff}_{g_1}\circ \cdots \circ \bar{\ldiff}_{g_d}(\xi) =0$ then  $\ldiff_{g_1}\circ \cdots \ldiff_{g_n}(\xi)$ is constant and, conversely, if $\ldiff_{g_1}\circ \cdots \ldiff_{g_n}(\xi)$ is constant then so is $\bar{\ldiff}_{g_1}\circ \cdots \circ \bar{\ldiff}_{g_d}(\xi)$, and since the latter function is normalized to be 0 at $\bbb$, it follows that $\bar{\ldiff}_{g_1}\circ \cdots \circ \bar{\ldiff}_{g_d}(\xi)=0$.
\end{proof}

Note that when the $G$-action on $\E$ is trivial,  condition (ii) in Lemma \ref{lem:diff-og-diff-bar} is equivalent to $\xi\in C(G,\E)^{G(d+1)}$. We will also need a bit of information regarding the kernel of $d^1$. To this end, note that  $d^1(\xi)(s,t,u):=\xi(t,u)-\xi(s,u)+\xi(s,t)$ so $d^1(\xi)=0$ implies
\begin{equation}\label{d1=0-eq}
\xi(s,u)=\xi(s,t)+\xi(t,u).
\end{equation}
Using this, it easily follows that for $\xi\in \ker(d^1)\subset C(G^2,\E)$  we have
\begin{align}
\xi(\bbb,\bbb)&=0 \label{cocycle-eq-1}\\
\xi(g,h)&= \xi(g,\bbb)+\xi(\bbb,h)\label{cocycle-eq-2}\\
\xi(\bbb,g)&=-\xi(g,\bbb)\label{cocycle-eq-3}
\end{align}
In order to give an inhomogeneous picture of polynomial cohomology we need a bit of notation.
\begin{notation}\label{not:beta}
For $\xi\in C(G^2,\mathcal{E})$ we denote by $\bar{\xi} \in C(G,\mathcal{E})$ the map $\bar{\xi}(g) := \xi(\bbb,g)$ and by $\beta\colon C(G^2,\mathcal{E}) \to C(G,\mathcal{E})$ the  map $\xi \mapsto \bar{\xi}$.
\end{notation}
The following proposition now generalizes the usual description of cohomology in terms of inhomogeneous $1$-cocycles; 
%there and in the sequel we will for $\xi\in C(G^2,\mathcal{E})$ denote by $\bar{\xi} \in C(G,\mathcal{E})$ the map $\bar{\xi}(g) = \xi(\bbb,g)$.

\begin{prop}\label{prop:1-cocycle_description}
The map $\beta\colon C(G^2,\E)\to C(G,\E)$ restricts to a continuous bijection from  $\operatorname{Z}^1_{(d)}(G,\E):=C(G^2,\E)^{G(d)}\cap \ker(d^1)$ onto
\[
\mathcal{P}:=\{\eta\in C(G, \E) \mid \eta(\bbb)=0 \text{ and } \bar{\del}_{g_1}\circ \cdots \circ \bar{\del}_{g_d}\eta=0 \text{ for all } g_1,\dots, g_d\in G\}.
\]
\end{prop}
\begin{proof}
%Denote the map $\xi\mapsto \bar{\xi}$ by  $\beta$.  
Since the topology on $C(G^2,\E)$  and $C(G,\E)$ is given by uniform convergence on compact subsets the continuity of $\beta$ is clear. It is furthermore injective on $\ker(d_1)$, because if $d^1(\xi)=0$ and $\bar{\xi}=0$ then $\xi(\bbb,g)=0$  and, by \eqref{cocycle-eq-3}, also $\xi(g,\bbb)=0$ for all $g\in G$. Thus, by \eqref{cocycle-eq-2}, $\xi(g,h)=\xi(g,\bbb)+\xi(\bbb,h)=0$. We now need to prove that $\beta$ takes values in the prescribed set. So, let $\xi\in \operatorname{Z}^1_{(d)}(G,\E)$ and $g_1,\dots, g_d\in G$ be given. Since $\xi(\bbb,\bbb)=0$, we have $\bar{\xi}(\bbb)=0$ so we only need to prove that
$
\bar{\del}_{g_1}\circ \cdots\circ \bar{\del}_{g_d}\bar{\xi}=0.
$
Using the three basic cocycle properties above, we now get:
\begin{align*}
\overline{\del_g\xi}(h) &:= \del_{g}\xi(\bbb,h)= g\xi(g^{-1},g^{-1}h)-\xi(\bbb,h)=\\
&= g\Big(\xi(g^{-1},\bbb)+\xi(\bbb,g^{-1}h)  \Big)-\xi(\bbb,h)\\
&=g\xi(\bbb,g^{-1}h)-\xi(\bbb,h) - g\xi(\bbb,g^{-1}) +\xi(\bbb,\bbb)\\
&=\del_g\bar{\xi}(h) -\del_g\bar{\xi}(\bbb)=\bar{\del}_g(\bar{\xi})(h).
\end{align*}
Inductively we therefore get  that
\begin{align}\label{bar-compatibility}
\overline{\del_{g_1}\circ \cdots \circ \del_{g_d}(\xi)}=\bar{\del}_{g_1}\circ \cdots \bar{\del}_{g_d}(\bar{\xi}),
\end{align}
for $\xi \in \ker(d^1)$. 
Thus, if $\xi\in \operatorname{Z}^1_{(d)}(G,\E)$ then the left hand side of  \eqref{bar-compatibility} vanishes and hence so does the right hand side; i.e.~$\bar{\xi}\in \mathcal{P}$.  To prove that $\beta$ is surjective, let $\eta\in \mathcal{P}$ be given and set $\tilde{\eta}(g,h):= \eta(h)-\eta(g)$. Then clearly $\tilde{\eta}\in C(G^2,\E)$ and a direct computation shows that $d^1\tilde{\eta}=0$. Since $\eta(\bbb)=0$, it is furthermore clear that $\bar{\tilde{\eta}}=\eta$ so all we have to prove is that $\tilde{\eta}\in C(G,\E)^{G(d)}$. Since we have already established that $\tilde{\eta}\in \ker(d^1)$ we may use \eqref{bar-compatibility} to conclude that
$
\overline{\del_{g_1}\circ \cdots \circ \del_{g_d}(\tilde{\eta})}=0.
$
However, as $\tilde{\eta}\in \ker(d^1)$ and $\ker(d^1)$ is a $G$-invariant subspace, also $\del_{g_1}\circ \cdots \circ \del_{g_d}(\tilde{\eta})\in \ker(d^1)$, and since $\beta$ is injective on $\ker(d^1)$ we conclude that $\del_{g_1}\circ \cdots \circ \del_{g_d}(\tilde{\eta})=0$ as desired.
\end{proof}
\color{black}

\begin{example}[quadratic 1-cocycles] \label{ex:quadratic_desc}
By the Lemma \ref{lem:diff-og-diff-bar} and Proposition \ref{prop:1-cocycle_description}, we may describe the inhomogeneous ``quadratic'' 1-cocycles $\xi\colon G\rightarrow \mathcal{E}$ as precisely those unital maps for which, for all $g,h\in G$, $({\ldiff}_g\circ {\ldiff}_h) \xi$ is constant. Computing this we get
\begin{align*}
(\ldiff_g\circ\ldiff_h)(\xi)(k) & = gh.\xi((gh)^{-1}k) -g.\xi(g^{-1}k) - h.\xi(h^{-1}k) + \xi(k) \\
 & = gh.\xi((gh)^{-1}) -g.\xi(g^{-1}) - h.\xi(h^{-1}),
\end{align*}
where the second equality follows by letting $k=\bbb$. This can be rewritten as
\begin{equation} \label{eq:quadratic_desc}
\xi(ghk) = \xi(gh) + g.\xi(hk) + ghg^{-1}.\xi(gk) - ghg^{-1}.\xi(g) - g.\xi(h) - gh.\xi(k), \quad g,h,k\in G.
\end{equation}
\end{example}

%% new section %%
\section{Polynomial maps on groups} \label{sec:polynomial_maps_on_groups}
In this section we study the space of polynomials on a group, which was already ad hoc introduced in the previous section,  and show  that $\Cohom^1_{(d)}(G,\RR)$ can be described directly in terms of the polynomials on $G$.
% But before restricting attention to the case of trivial coefficients, we first prove a few preparatory results of general nature, with the aim of providing a simpler picture of the space of polynomial 1-cocycles, corresponding to the description of classical cohomology in terms of inhomogeneous cocycles.
%\subsection{Polynomial maps and polynomial cohomology with trivial coefficients}\label{subsec:poly-maps}
%In this section we focus on polynomial $1$-cohomology with trivial coefficients; i.e.~the case of $\mathcal{E} = \mathbb{R}$ with trivial action. In this case, the maps  satisfying the conditions in Lemma \ref{lem:diff-og-diff-bar} are called polynomial maps on $G$, see e.g.~\cite{Leibman:Polynomial_mappings}.  
As already mentioned in the introduction, the (abstract) notion of polynomial maps on groups goes  back (at least) to the work of Passi \cite{Passi-I,Passi-II, Passi-functor}  and has since then appeared in a number of different contexts; see e.g. \cite{ Babakhanian:Cohomological_Methods, Buckley-1, Leibman:Polynomial_mappings, Szekelyhidi} and references therein.
We now formally define the space of polynomial maps:

\begin{definition}[polynomial maps]\label{def:pol-maps}
Let $G$ be a lcsc group and let $\xi\in C(G,\mathbb{R}) \setminus\{0\}$. We say that $\xi$ is a \emph{polynomial} of degree at most $d\in \mathbb{\NN}_0$ if for all $g_1,\dots ,g_{d+1}\in G$ we have 
\begin{equation} \label{eq:polynomial_map_def}
(\ldiff_{g_1}\circ \cdots \circ \ldiff_{g_{d+1}})(\xi) = 0,
\end{equation}
where $C(G,\RR)$ is considered a $G$-module for the left regular action. The \emph{degree} $\deg \xi$ of a polynomial map $\xi$ is the smallest number $d$ such that $\xi$ satisfies \eqref{eq:polynomial_map_def} for all $g_1,\dots ,g_{d+1} \in G$. 
%that is, the smallest $d\in \mathbb{Z}_*$ such that $\xi\in C(G,\mathbb{R})^{G(d+1)}$, with the convention that the degree of the constant zero polynomial is $\deg 0 = -\infty$. 
Moreover, the zero-map is formally included in the set of polynomials and assigned the degree $-\infty$. We denote the set of polynomials of degree at most $d$ by $\operatorname{Pol}_d(G) := C(G,\mathbb{R})^{G(d+1)}$ and by $\operatorname{Pol}(G)$ the set  $\cup_{d\in \mathbb{Z}_{\ast}} \operatorname{Pol}_d(G)$. Lastly, a polynomial $\xi$ is said to be \emph{unital} if $\xi(\bbb)=0$;  we denote the set of unital polynomials by $\Pol^0(G)$ and those of degree at most  $d$ by $\Pol_d^0(G)$.
\end{definition}

\begin{remark}\label{rem:right-diff}
%In what follows we investigate alternative definitions of the space of polynomials and show that they agree with the one given in Definition \ref{def:pol-maps}. 
 %We first show that there is no difference between the polynomials defined by left- and right regular action. 
 One could of course also define polynomials by means of right differences instead of left differences; i.e.
for a function $\xi\colon G \rightarrow 
 \mathbb{R}$,  consider  the \emph{right-difference operator} defined by 
\begin{equation}
(\rdiff_g \xi)(h) := \xi(hg) - \xi(h),\notag
\end{equation}
and introduce right polynomials accordingly. However, 
by  \cite[Corollary 2.13]{Leibman:Polynomial_mappings},  $\xi\in C(G,\mathbb{R})$ is a left polynomial map of degree $d$ if and only if  it is a right polynomial of degree $d$.

\end{remark}

\begin{rem}  We record the following basic facts concerning polynomials:
\begin{enumerate}
\item When $G$ is equal to $\RR$ (or more generally $\RR^n$) the above definition recovers the classical notion of polynomials and their degrees; we leave the argument as an exercise. 
\item The set $\Pol_0(G)$ consists of the constant functions on $G$ and  the set $\Pol_1(G)$ consists of functions of the type $\xi=\varphi +r$ where $\varphi\colon G\to \RR$ is a (continuous) homomorphism and $r\in \RR$ is a constant.

\item When $G$ is compact the only polynomials are the constant functions. For polynomials of degree 1 this is clear from the description just given, since the image of $G$ under a continuous homomorphism is a compact, additive subgroup of $\RR$ and hence equal to $\{0\}$. The general case now follows by induction on the degree. Note that this is a special case of the situation treated in Remark \ref{rem:compact-abelianization}.
\end{enumerate}
\end{rem}

In the language just introduced, Lemma \ref{lem:diff-og-diff-bar} and  Proposition \ref{prop:1-cocycle_description} simply say, that (with trivial coefficients) the space of  homogeneous $1$-cocycles of polynomial degree $d$ in the standard resolution is isomorphic to the space of  unital polynomial maps of degree at most $d$. The following proposition now describes (again for trivial coefficients) the space of polynomial $1$-coboundaries:

\begin{proposition} \label{prop:HdP1_R_description}
Let $G$ be a lcsc group and let $d\in \mathbb{N}$. Under the map $\beta\colon \xi\mapsto \bar{\xi}$ defined in Notation \ref{not:beta}, the set of polynomial 1-coboundaries $\operatorname{B}^1_{(d)}(G,\RR):=d^0(C(G,\mathbb{R})^{G(d)})$ maps bijectively onto the space of (continuous) unital polynomial maps of degree at most $d-1$; that is we obtain an isomorphism
\begin{equation}
\PolCohom{d}{1}{}{G}{\mathbb{R}} \cong \operatorname{Pol}_{d}(G)/\operatorname{Pol}_{d-1}(G).\notag
\end{equation}
In particular, $\PolCohom{d}{1}{}{G}{\mathbb{R}}$ is Hausdorff and the natural $G$-action on $\operatorname{Z}^1_{(d)}(G,\RR)$ induces  the trivial action on $\PolCohom{d}{1}{}{G}{\mathbb{R}}$.
\end{proposition}
We remark that all statements in Proposition \ref{prop:HdP1_R_description} are trivial when $d=1$, since $\Cohom^1(G,\RR)\simeq \Hom(G,\RR)$ and since the induced action is  trivial already  at the level of inhomogeneous cocycles (i.e.~$G$-invariant functions).

\begin{proof}
Since $\operatorname{B}^1_{(d)}(G,\RR):=d^0\left(C(G,\RR)^{G(d)}\right)$ is a subset of  $\operatorname{Z}^1_{(d)}(G,\RR)$, on which we know that the map $\beta$ is already injective and takes values in $\Pol_{d}^0(G)$, we have to prove that $\beta$ restricted to the coboundaries takes values in $\Pol_{d-1}^{0}(G)$ and is surjective onto this set. Recall that $d^0(\eta)(g,h):=\eta(h)-\eta(g)$, so if $\xi=d^0(\eta)$ for $\eta\in C(G,\RR)^{G(d)}$ then we have 
\begin{align}\label{beta-formula}
\beta(\xi)(g)=\beta(d^0\eta)(g)=d^0(\eta)(\bbb,g)=\eta(g)-\eta(\bbb).
\end{align}
Thus,
\[
\del_{g_1}\circ \cdots \del_{g_d}(\beta(\xi))= \del_{g_1}\circ \cdots \del_{g_d}(\eta-\eta(\bbb))=\del_{g_1}\circ \cdots \del_{g_d}(\eta)=0,
\]
and hence $\deg(\beta(\xi))\leq d-1$. On the other hand, given $\eta\in \Pol_{d-1}^0(G)\subset C(G,\RR)^{G(d)}$, the  computation \eqref{beta-formula} shows that $\beta(d^0\eta)=\eta$, %(since $\eta(\bbb)=1$) 
and hence $\beta$ is surjective from $\operatorname{B}^1_{(d)}(G,\RR)$ onto $\Pol_{d-1}^0(G)$. Moreover since $\beta\colon \operatorname{Z}^1_{(d)}(G,\RR) \to \Pol_{d}(G)$ is  continuous, it follows from this that $\operatorname{B}^1_{(d)}(G,\RR)=\beta^{-1}(\Pol_{d-1}^0(G))$ is closed in $\operatorname{Z}^1_{(d)}(G,\RR)$  since $\Pol_{d-1}^0(G)$ is closed in $\Pol_d^0(G)$, and hence that $\Cohom_{(d)}^{1}(G,\RR)$ is Hausdorff. Moreover,  $\beta$ induces an isomorphism of topological vector spaces
\[
\Cohom_{(d)}^1(G,\RR)\simeq \Pol_{d}^0(G)/\Pol_{d-1}^0(G)\simeq \Pol_d(G)/\Pol_{d-1}(G)
\]
as claimed, where the latter isomorphism is induced by the (split) inclusion $\iota\colon \Pol^0(G) \to \Pol(G)$.
 The induced action on the right hand side is trivial, since for $\xi\in \Pol_{d}(G)$ and $g\in G$ we have $\del_g\xi\in \Pol_{d-1}(G)$. However, $\iota \circ \beta$ is not quite a $G$-equivariant map at the level of cocycles, but we now show that the induced map $\Cohom_{(d)}^1(G,\RR) \to \Pol_{d}(G)/\Pol_{d-1}(G)$ is. More precisely, we show that $\iota\circ\beta(g.f)-g.(\iota\circ \beta(f))$ differ by a constant map  --- in particular the difference is in $\Pol_{d-1}(G)$. This follows from \eqref{cocycle-eq-3}, \eqref{cocycle-eq-2} and \eqref{d1=0-eq}, since for $f\in \ker(d^1)$ we have
\[
(\iota \circ \beta(g.f)-g. (\iota\circ\beta(f))(x)= f(g^{-1},g^{-1}x)-f(\bbb,g^{-1}x)=f(g^{-1},g^{-1}x)+f(g^{-1}x,\bbb)= f(g^{-1},\bbb).
\]
\end{proof}

\begin{corollary} \label{cor:cohom_fin_dim_implies_pol_fin_dim}
Let $G$ be a cohomologically finite dimensional lcsc group. Then $ \operatorname{Pol}_d(G)$ is finite dimensional for all $d\in \mathbb{N}_0$.
\end{corollary}

\begin{proof}
By induction on $d$. For $d=0$ this is trivial, and for $d=1$ we observe that 
\[
\dim_{\mathbb{R}} \operatorname{Pol}_1(G) = 1+ \dim_{\mathbb{R}} \Cohom^1(G,\mathbb{R}) < \infty. 
\]
The inductive step follows from part (i) of Proposition \ref{prop:PolCohom_linear_description}.
\end{proof}

\color{black}

\begin{remark}\label{rem:explicit_description_of_tau}
We now return to the isomorphism $\tau^1\colon \Cohom^1(G,\Pol_{d-1}(G)) \to \Cohom^1_{(d)}(G,\RR)$ given by part (i) of Proposition \ref{prop:PolCohom_linear_description}, with the aim of providing a more explicit description  of this map in terms of the description of $\Cohom^1_{(d)}(G,\RR)$ by means of polynomials on $G$.  For notational convenience, we denote $(\tau^1)^{-1}$ by $\tau' $. 
We first describe $\tau^1$ at the level of inhomogeneous cocycles, i.e.~from $\operatorname{Z}^1(G,\Pol_{d-1}(G))$  to  $\Pol_d(G)$. Given an inhomogeneous 1-cocycle $c\colon G\to \Pol_{d-1}(G)$, then the standard map back to the homogeneous picture sends $c$ to $\tilde{c}\colon G\times G\to \Pol_{d-1}(G)$ given by $\tilde{c}(g_0,g_1):=g_0c(g_0^{-1}g_1)$ (see e.g.~\cite[I, n$^{\circ}$ 3.2 \& III, n$^{\circ}$ 1.3 ]{Guichardet:Cohomologie}) and thus 
\[
\tau^1(\tilde{c})(g_0,g_1)=(g_0c(g_0^{-1}g_1))(\bbb)=c(g_0^{-1}g_1)(g_0^{-1}).
\]
This is then a homogeneous polynomial 1-cocycle, and to get back to the inhomogeneous picture (i.e.~the description using polynomials) we need to apply the `bar-map' $\beta$ defined in Notation \ref{not:beta}.  That is, we fix the first variable $g_0=\bbb$ and obtain the map
$
g\mapsto c(g)(\bbb),
$
and this is then the polynomial in $\Pol_d(G)$ representing  $\tau^1([c])$ in $\Cohom_{(d)}^1(G,\RR)=\Pol_{d}(G)/\Pol_{d-1}(G)$. To get an explicit description of the inverse map $\tau'$, consider a polynomial $\xi\in \Pol_d(G)$. The corresponding homogeneous polynomial 1-cocycle is given by $\hat{\xi}(g_0,g_1):=\xi(g_1)-\xi(g_0)$ (cf.~the proof of Proposition \ref{prop:1-cocycle_description}) and therefore
\[
\tau'(\hat{\xi})(g_0,g_1)(t)= \hat{\xi}(t^{-1}g_0,t^{-1}g_1)=\xi(t^{-1}g_1)-\xi(t^{-1}g_0)
\]
The inhomogeneous 1-cocycle corresponding to  $\tau'([\hat{\xi}])$ is then obtained by fixing the variable $g_0=\bbb$, and hence $\tau'([\xi])$ is represented the 1-cocycle $c\colon G\to \Pol_{d-1}(G)$  given by
\[
c(g)(t)= \xi(t^{-1}g)-\xi(t^{-1})=\del_g(\tilde{\xi})(t),
\]
where $\tilde{\xi}(g):=\xi(g^{-1})$. That is, $c(g)=\del_g\tilde{\xi}$.

\end{remark}

\begin{cor}\label{cor:proj_induces_injection}
The map $\pi^*\colon \Cohom^1(G,\Pol_d(G))\to \Cohom^1(G,\Cohom_{(d)}^1(G,\RR))$, induced by the quotient map $ \pi\colon \Pol_{d}(G)\to \Cohom_{(d)}^1(G, \RR)$, is injective 
\end{cor}
\begin{proof}
Considering the short exact sequence 
\[
0\to \Pol_{d-1}(G) \to \Pol_{d}(G) \to \Cohom^1_{(d)}(G,\RR) \to 0,
\]
and the corresponding long exact sequence in cohomology, the statement is seen to be equivalent to showing that $\iota\colon \Pol_{d-1}(G) \to \Pol_{d}(G)$ induces the zero map in 1-cohomology.  To this end, we first prove that the following diagram commutes
\[
\xymatrix{
\Cohom^1(G,\Pol_{d-1}(G)) \ar[r]^{\iota^*} \ar[d]^{\tau^1}_{\simeq} &  \Cohom^1(G, \Pol_d(G)) \ar[d]^{\tau^1}_{\simeq} \\
\Cohom^1_{(d)}(G,\RR) \ar[r]_{(\subset)^*} & \Cohom^1_{(d+1)}(G,\RR) ,
}
\]
where $(\subset)^*$ is the map induced by the inclusion $\Pol_{d}(G)\subset \Pol_{d+1}(G)$ and $\tau^1$ is the isomorphism given by Proposition \ref{prop:PolCohom_linear_description}. For this, we will use the explicit description of $\tau^1$ and $\tau':=(\tau^{1})^{-1}$ at the level of cocycles discussed in Remark \ref{rem:explicit_description_of_tau}. Let $\eta\in \Pol_{d}(G)$ be given. Since everything depends only on the class $[\eta]\in \Cohom^1_{(d)}(G,\RR)$, by subtracting a constant polynomial we may assume that $\eta(\bbb)=0$. Now consider the cocycle $g\mapsto \del_g\tilde{\eta}$ representing $\tau'([\eta])\in \Cohom^1(G,\Pol_{d-1}(G))$, where $\tilde{\eta}(g):=\eta(g^{-1})$. Composing with $\iota$ just gives the same cocycle now considered as taking values in $ \Pol_{d}(G)$, and applying $\tau^1$ amounts to evaluating at $\bbb$; that is, $\tau^1\circ \iota^*\circ \tau'([\eta])\in \Cohom^1_{(d+1)}(G,\RR)$ is represented by the polynomial:
\[
g\mapsto (\del_{g}\tilde{\eta})(\bbb)= \tilde{\eta}(g^{-1})-\tilde{\eta}(\bbb)=\eta(g)-\eta(\bbb)=\eta(g).
\]
Hence the map $\tau^1\circ \iota^*\circ\tau'$ agrees with  $(\subset)^*\colon\Cohom^1_{(d)}(G,\RR) \to \Cohom^1_{(d+1)}(G,\RR)$, and the latter map is clearly zero. \end{proof}

\color{black}

\begin{remark} \label{rmk:ldiff_iso_cohom}
As follows from Remark \ref{rem:explicit_description_of_tau}, the map $\ldiff\colon C(G,\mathbb{R}) \rightarrow C(G, C(G,\mathbb{R}))$ given by $ f \mapsto (g \mapsto \ldiff_g f)$, when pre-composed with $\operatorname{inv}^* \colon C(G,\mathbb{R}) \rightarrow C(G,\mathbb{R}) \colon f \mapsto (g\mapsto f(g^{-1}))$, induces an isomorphism $\overline{\ldiff \circ \operatorname{inv}^*} \colon \operatorname{Pol}_d(G)/\operatorname{Pol}_{d-1}(G) \xrightarrow{\cong} \Cohom^1(G,\operatorname{Pol}_{d-1}(G))$.  However, observe that since $\operatorname{inv}^*$ induces a degree-preserving  linear automorphism of $\operatorname{Pol}(G)$, it follows that the map $\bar{\ldiff} \colon \operatorname{Pol}_d(G)/\operatorname{Pol}_{d-1}(G) \rightarrow \Cohom^1(G,\operatorname{Pol}_{d-1}(G))$, mapping $\xi \in \operatorname{Pol}_d(G)$ to the equivalence class of the cocycle $g\mapsto \ldiff_g \xi$, is an isomorphism as well. 
%Below we will freely identify $\Pol_d(G)/\Pol_{d-1}(G)$ with $\Cohom^1(G,\Pol_{d-1}(G))$ and, unless explicitly specified otherwise we will do so using the isomorphism $\bar{\del}$.
%Note that, more generally, given any continuous $G$-module $\mathcal{E}$ and any $\xi \in \mathcal{E}^{G(d+1)}$, the map $g\mapsto \ldiff_g.\xi$ is a cocycle into $\mathcal{E}^{G(d)}$; so this is consistent with the above.
\end{remark}

The remainder of this section is devoted to a more detailed analysis of the degree function and its interplay with the differentiation operators,
and for this  we will  use the extended addition and subtraction on $\ZZ_*:=\NN_{0}\cup\{-\infty\}$ defined in the Section \ref{sec:notation}.

\begin{proposition} \label{prop:pol_diff_is_pol}
Let $G$ be a group and $\xi\colon G \rightarrow \mathbb{R}$ be a %(continuous) 
polynomial map of degree $ d \geq 1$. Then for every $s\in G$, the map $\varphi_{\xi,s}\colon g\mapsto (\ldiff_g \xi)(s)$ is a %(continuous) 
polynomial map of degree $ d$, and so is  $g\mapsto (\rdiff_g\xi)(s)$.
\end{proposition}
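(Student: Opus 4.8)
The plan is to reduce everything to two degree-preserving symmetries of $G$: translation and inversion. The key elementary observation is that if $\zeta\colon G\to\mathbb{R}$ is a polynomial of degree $d\ge 1$ and $c\in\mathbb{R}$, then $\zeta - c\cdot\bbb$ is again a polynomial of degree exactly $d$, because $\operatorname{Pol}_d(G) = C(G,\mathbb{R})^{G(d+1)}$ is a vector space containing the constants $\operatorname{Pol}_0(G)$, and no constant can cancel the top-degree part when $d\ge 1$.

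First I would treat the $\rdiff$-version, since it is the cleanest. Fix $s\in G$ and let $L_s\colon h\mapsto sh$. A one-line computation gives the intertwining relation $\rdiff_g(\xi\circ L_s) = (\rdiff_g\xi)\circ L_s$, and hence $(\rdiff_{g_1}\circ\cdots\circ\rdiff_{g_k})(\xi\circ L_s) = \bigl((\rdiff_{g_1}\circ\cdots\circ\rdiff_{g_k})\xi\bigr)\circ L_s$ for all $g_1,\dots,g_k$. Since $L_s$ is a bijection, the right-hand side vanishes identically if and only if the corresponding iterated difference of $\xi$ does, so $\xi\circ L_s$ is a polynomial of degree exactly $\deg\xi = d$ (measuring degree with right-differences, as justified by the Remark following the definition of polynomial maps). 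As a function of $g$ we then have $g\mapsto(\rdiff_g\xi)(s) = \xi(sg)-\xi(s) = (\xi\circ L_s)(g) - \xi(s)$, so by the preliminary observation this map has degree exactly $d = (d\minusdot 1)\plusdot 1$. Continuity is inherited since $L_s$ is continuous.

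Then I would obtain the stated $\ldiff$-version by the same mechanism, now using inversion $\iota\colon g\mapsto g^{-1}$ and right-translation $R_s\colon h\mapsto hs$. Here $\varphi_\xi(g) = (\ldiff_g\xi)(s) = \xi(g^{-1}s)-\xi(s) = (\xi\circ R_s\circ\iota)(g) - \xi(s)$. The computation $\ldiff_g(\xi\circ R_s) = (\ldiff_g\xi)\circ R_s$ shows $\xi\circ R_s$ has degree $d$, while $\rdiff_g(\zeta\circ\iota) = (\ldiff_g\zeta)\circ\iota$ shows that pre-composition with the bijection $\iota$ carries left-polynomials of degree $d$ to right-polynomials of degree $d$; combining these with the left/right equivalence of degree yields that $g\mapsto\xi(g^{-1}s)$ is a polynomial of degree $d$, and subtracting the constant $\xi(s)$ leaves the degree unchanged. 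I expect the only genuinely delicate point to be this last bookkeeping—keeping the left- and right-difference conventions and the inversion straight—whereas the conceptual core, that translation and inversion are degree-preserving bijections of $G$, is routine.
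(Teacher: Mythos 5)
Your proof is correct, but it takes a genuinely different route from the paper's. The paper's proof is a recursion plus induction on degree: from the identity $\ldiff_{gh}\xi = (\ldiff_g\circ\ldiff_h)(\xi)+\ldiff_g\xi+\ldiff_h\xi$ it derives $(\rdiff_h\varphi_{\xi})(g) = \varphi_{\ldiff_h\xi}(g) + (\ldiff_h\xi)(s)$, so each right-difference of $\varphi_\xi$ is again a map of the same form attached to the lower-degree polynomial $\ldiff_h\xi$, and the claim follows by induction on $\deg\xi$. You instead exhibit $\varphi_\xi$ and its $\rdiff$-analogue as pullbacks of $\xi$ along degree-preserving bijections of $G$, up to an additive constant: $g\mapsto(\rdiff_g\xi)(s)$ equals $\xi\circ L_s - \xi(s)\cdot\bbb$ and $\varphi_\xi = \xi\circ R_s\circ\iota - \xi(s)\cdot\bbb$, with the intertwining relations $\rdiff_g(\zeta\circ L_s)=(\rdiff_g\zeta)\circ L_s$, $\ldiff_g(\zeta\circ R_s)=(\ldiff_g\zeta)\circ R_s$ and $\rdiff_g(\zeta\circ\iota)=(\ldiff_g\zeta)\circ\iota$ doing all the work. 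Both arguments must invoke the left/right equivalence of degree recorded in the Remark after the definition of polynomial maps (the paper because it measures the left-difference map $\varphi_\xi$ by right-differences, you because of the inversion step), so neither is more self-contained on that point. What your route buys: no induction, and the exact equality $\deg\varphi_\xi = d$ comes essentially for free, since bijective pre-composition preserves exact degree and subtracting a constant cannot change a degree $\geq 1$; in the paper's induction, exactness requires the additional (implicit) observation that $\deg(\ldiff_h\xi)=d-1$ for some $h$. What the paper's route buys: the relation $\rdiff_h\varphi_\xi = \varphi_{\ldiff_h\xi} + \mathrm{const}$ is a structural "differencing commutes with differentiation" identity, and this style of manipulation recurs in this section (compare the cocycle identity for $g\mapsto\rdiff_g f$ used in the proof of Lemma \ref{lma:pol_determined_diff}).
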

Here $\rdiff_g$ denotes the right difference operator introduced in Remark \ref{rem:right-diff}.

\begin{proof}
For any function $\xi\colon G\rightarrow \mathbb{R}$, a direct computation verifies that the differential satisfies
\begin{equation}\label{diff-wrt-product}
\ldiff_{gh}\xi = (\ldiff_g\circ \ldiff_h)(\xi) + \ldiff_g\xi + \ldiff_h\xi, \quad g,h\in G.
\end{equation}
Thus for any $h\in G$ we have
\begin{align}
(\rdiff_h\varphi_{\xi,s})(g) & = \varphi_{\xi,s}(gh)-\varphi_{\xi,s}(g) \notag \\
 & = (\ldiff_g \circ \ldiff_h)(\xi)(s) + (\ldiff_h \xi)(s) \notag \\
 & = \varphi_{\ldiff_h \xi,s}(g) + \varphi_{\xi,s}(h)\label{eq:diff-of-phi}.
\end{align}
If $d=1$ then $\varphi_{\ldiff_h \xi,s}=0$ so $(\rdiff_h\varphi_{\xi})$ is constant equal to $\varphi_{\xi,s}(h)$,  and hence $\deg(\varphi_{\xi,s})\leq 1$. However, since $d=1$, $\ldiff_h \xi$ is constant for every $h\in G$ and for some $h_0$ this constant is non-zero. So, $\rdiff_{h_0}\varphi_{\xi,s} \neq 0$ proving that $\deg(\varphi_{\xi,s})=1$.
The general case now follows by induction on $d$. For the inductive step, assume that the statement is true for $d-1$ and that $\xi$ has degree $d\geq 2$. For $h\in G$, $\del_h\xi$ has degree at most $d-1$, so the induction takes over and gives $\deg(\varphi_{\ldiff_h \xi,s})\leq d-1$ and since $\varphi_{\xi,s}(h)$ is constant in the variable $g$, $\deg(\varphi_{\xi,s})\leq d$ by the computation \eqref{eq:diff-of-phi}. But for some $h_0$, $\del_{h_0}\xi$ has degree equal to $d-1 \geq 1$ and hence, by the induction, so does  $\varphi_{\ldiff_{h_0} \xi,s}(-) + \varphi_{\xi,s}(h_0)= \rdiff_{h_0}\varphi_{\xi,s}$; thus, $\deg(\varphi_{\xi,s})=d$.
\end{proof}

Let $G$ be a lcsc group and let $\xi,\eta\colon G \rightarrow \mathbb{R}$ be polynomial maps on $G$. Then it is easy to see that the pointwise product $\xi\cdot \eta \colon g\mapsto \xi(g) \eta(g)$ is again a  polynomial map with $\deg (\xi\cdot \eta) \leq \deg \xi \plusdot \deg \eta$; indeed, we have

\begin{align}\label{eq:product_rdiff}
\rdiff_g (\xi\cdot \eta)(h) &= \xi(hg)\cdot (\rdiff_g\eta)(h) + (\rdiff_g \xi)(h) \cdot \eta(h) \ \textrm{and} \notag \\
\ldiff_g (\xi\cdot \eta)(h) &= \xi(g^{-1}h)\cdot (\ldiff_g\eta)(h) + (\ldiff_g \xi)(h) \cdot \eta(h),
 \end{align}
from (either of) which the inequality  follows by induction on $\deg\xi \plusdot \deg \eta$ (we shall actually show below that equality holds for connected, simply connected, nilpotent Lie  groups).
In particular the multiplication map induces a linear map
\begin{equation}
\PolCohom{d}{1}{}{G}{\mathbb{R}} \otimes \PolCohom{d'}{1}{}{G}{\mathbb{R}} \rightarrow \PolCohom{d+d'}{1}{}{G}{\mathbb{R}}\notag
\end{equation}
for each pair $(d,d') \in \mathbb{N}^2$. Note also, that  equation \eqref{eq:product_rdiff} implies that we have the following version of the Leibniz rule for the differentials:
\begin{equation}\label{eq:leibniz_rule}
\rdiff_g(\xi\eta)=\rdiff_g(\xi)\rdiff_g(\eta) + \rdiff_g(\xi)\eta + \xi\rdiff_g(\eta),
\end{equation}
and similarly for  $\ldiff$. \\

The last goal in this section is to give a sharper estimate on the degree of $\rdiff_g \xi$ for a polynomial map $\xi$. 
%\begin{definition}[central series]
To this end, recall first that a \emph{central series} $\mathscr{G} = (G_{i})_{i\in \mathbb{N}}$ in a (topological) group $G$ is a decreasing sequence of (closed) normal subgroups $G_{i}\unlhd G$, with $G=G_{1}$ and such that $[G_{i},G] \subseteq G_{i+1}$ for all $i\in \mathbb{N}$.
%\end{definition}
The \emph{lower central series} of a group $G$ is the (decreasing) sequence $\mathscr{G}_{\text{min}} = (G_{[i]})_{i\in \NN}$ of subgroups of $G$ defined recursively by $G_{[1]} := G$ and $G_{[i+1]} := {[G,G_{[i]}]}$, where the latter denotes the group generated by commutators of elements from $G$ and $G_{[i]}$. 
In  case  $G$ is endowed with a topology, the lower central series is defined by closing up the algebraically defined ditto.
Observe that each $G_{[i]}$ is a characteristic subgroup of $G$; i.e.~globally preserved by any automorphism of $G$. Further, for any central series $\mathscr{G} = (G_i)_{i\in \NN}$ in $G$ we have, by construction, $G_{[i]} \leq G_i$ and moreover one may prove that $[G_{[i]},G_{[j]}]\leq G_{[i+j]}$ for all $i,j\in \NN$ (see e.g.~\cite[Corollary 0.31]{Baumslag:Nilpotent}).

%To do so, the following definition will be convenient.

\begin{definition}[degree wrt.~a central series]\label{def:degree-def}
Let $G$ be a (lcsc) group and $\mathscr{G}$ a central series in $G$ of finite length. For every $g\in G$ we define the degree $\deg_{\mathscr{G}} g$ of $g$ with respect to the central series $\mathscr{G}$ by
\begin{equation}
\deg_{\mathscr{G}} g := \max \{ i \mid g\in G_{i}, g\notin G_{i+1} \}.\notag
\end{equation}
%When $G$ is a  %residually 
%Mal'cev group, the degree $\deg g$ of an element $g$ in $G$, will refer to the degree with respect to the Mal'cev central series, unless explicitly stated otherwise.
\end{definition}
The following result now gives an improved bound on the degree of $\rdiff_g\xi$  in the situation where one knows where $g$ is located in the lower central series. 

\begin{lem}\label{lma:pol_diff_degree_wrt_lcs}
Let $G$ be a group and let $\xi\in \Pol(G)$. Then for $g\in G_{[k]}$ we have $\deg(\rdiff_g\xi)\leq \deg \xi \minusdot k$. Hence, $\deg(\rdiff_g\xi)\leq \deg \xi \minusdot \deg g$, when $\deg(g)$ is taken with respect to the lower central series.
\end{lem}

\begin{rem}
In the statement of Lemma \ref{lma:pol_diff_degree_wrt_lcs}, the group $G$ is not a priori assumed to carry a topology and the lower central series is therefore to be understood in the purely algebraic sense. Note, however, that if $G$ is a csc Lie group, then the algebraically  defined lower central series automatically consists of closed subgroups \cite[XII, Theorem 3.1]{Hochschild:Structure-of-Lie-grps} and hence, in this case, there is no difference between the topological and algebraic lower central series. More generally, if  $G$ is a lcsc group (possibly not of Lie type) and $\xi\in \Pol_{d}(G)$, then Lemma \ref{lma:pol_diff_degree_wrt_lcs} shows that $\rdiff_g \xi \in \Pol_{d\minusdot k}(G)$ for all $g\in G_{[k]}$ (the algebraically defined lower central series).  Moreover, the map $g\mapsto \rdiff_g\xi$ is continuous into  $C(G,\RR)$ (endowed with the Fr{\'e}chet topology of uniform convergence on compacts), and since $\Pol_{d\minusdot k}(G)$ is a closed subspace in $C(G, \RR)$, this shows that $\rdiff_g \xi \in \Pol_{d\minusdot k}(G)$ also for $g$ in the closure $\overline{G_{[k]}}$; i.e., the statement of Lemma \ref{lma:pol_diff_degree_wrt_lcs} holds true in the topological context as well.
\end{rem}

\begin{proof}[Proof of Lemma \ref{lma:pol_diff_degree_wrt_lcs}]
We prove the statement by induction on $k$. If $k=1$ then the statement is true by the definition of a polynomial map. 
 Assume now that the statement is true for $k-1\geq 1$ and let $x\in G_{[k]}$ be given. Assume first that $x=g^{-1}h^{-1}gh$ with $g\in G_{[k-1]}$ and $h\in G$. Then, as $G_{[k-1]}$ is normal in $G$, by computing modulo $\Pol_{d\minusdot k}(G)$ (symbolically represented by `$\equiv$') we get
\begin{align*}
\rdiff_x\xi &=\rdiff_{g^{-1}(h^{-1}gh)}(\xi)\\
&=\rdiff_{g^{-1}}\circ \rdiff_{h^{-1}gh}\xi + \rdiff_{g^{-1}}(\xi) +\rdiff_{h^{-1}g h}(\xi) \tag{by \eqref{diff-wrt-product}}\\
& \equiv  \rdiff_{g^{-1}}(\xi) +\rdiff_{h^{-1}g h}(\xi)\\
& \equiv  \rdiff_{g^{-1}}(\xi) +\rdiff_{h^{-1}} \circ \rdiff_{gh}(\xi) + \rdiff_{h^{-1}}(\xi) +\rdiff_{gh}(\xi)\\
& =  \rdiff_{g^{-1}}(\xi) +\rdiff_{h^{-1}}\Big(\rdiff_{g}\circ \rdiff_{h}(\xi) +\rdiff_{g}(\xi) +\rdiff_{h}(\xi) \Big) +\rdiff_{h^{-1}}(\xi) +\rdiff_{g}\circ \rdiff_{h}(\xi) +\rdiff_{g}(\xi)+\rdiff_{h}(\xi)\\
& \equiv \rdiff_{g^{-1}}(\xi) + \rdiff_{h^{-1}}\circ \rdiff_{h}(\xi)+\rdiff_{h^{-1}}(\xi) + \rdiff_{g}(\xi) + \rdiff_{h}(\xi)\\
&= \rdiff_{g^{-1}}(\xi) + \rdiff_{hh^{-1}}(\xi) +\rdiff_{g}(\xi)\\
&= \rdiff_{gg^{-1}}(\xi) -\rdiff_{g}\circ\rdiff_{g^{-1}}(\xi) \\
&=-\rdiff_{g}\circ\rdiff_{g^{-1}}(\xi)\equiv 0.
\end{align*} 
A completely analogous computation shows that  also $\rdiff_{x^{-1}}\xi\equiv 0$ and from \eqref{diff-wrt-product}  if follows that $\rdiff_x\xi\equiv \rdiff_y(\xi)\equiv 0$ implies that $\rdiff_{xy} \xi \equiv 0$. Hence $\rdiff_z(\xi)\equiv 0$ for all $z\in G_{[k]}$ as desired.
\end{proof}

As a consequence of Lemma \ref{lma:pol_diff_degree_wrt_lcs}, we also record the following result due to Leibman:
\begin{cor}[{\cite[Lemma 2.12 \& 2.14]{Leibman:Polynomial_mappings}}]\label{cor:leibman-cor}
If $\xi \in \Pol_{d}^0(G)$ then $\xi$ vanishes on $G_{[d+1]}$.
\end{cor}
\begin{proof}
For $g\in G_{[d+1]}$ we have $\deg(\rdiff_g\xi)\leq d\minusdot (d+1)=-\infty $ so $\rdiff_g\xi=0$. Thus
\[
0= \rdiff_g(\xi)(\bbb)=\xi(g)-\xi(\bbb)=\xi(g). \qedhere
\]
\end{proof}

We end this section with a small lemma to be used in the section to follow.
\begin{lem}\label{lem:poly-on-quotient}
Let $G$ be a lcsc group and let $Z\leq G$ be a normal subgroup isomorphic to $\RR$. If  $z_0\in Z\setminus \{\bbb_G\}$ and $\xi\in \Pol(G)$ satisfies $\rdiff_{z_0}\xi=0$ then $\xi$ descends to a polynomial $\bar{\xi}$ on  $G/Z$ of the same degree.
\end{lem}
\begin{proof}
To see that $\bar{\xi}$ is well defined we need to show that $\xi$ is constant on the cosets of $Z$. For $g\in G$, the left translate $g^{-1}.\xi$ is again a polynomial and 
hence so is the restriction $\eta:=(g^{-1}.\xi)\restriction_Z$. By assumption we have $\xi(hz_0)=\xi(h)$ for all $h\in G$ and hence $\xi(gz_0^2)=\xi(gz_0)=\xi(g)$ and, recursively, 
$
\xi(gz_0^n)=\xi(g)
$
for all $n\in \NN$. The map $\eta$ is therefore a polynomial on $Z\simeq \RR$  which is constant on an infinite set, and since the polynomials on $\RR$ are exactly the classical polynomials
this can only happen if $\eta=0$. That is,  $\xi(gz)=\xi(g)$ for all $z\in Z$ and therefore $\bar{\xi}\colon G/Z\to \RR$ is well defined. We furthermore have
\[
\rdiff_{z_0} \circ\ldiff_g(\xi) = \ldiff_g \circ \rdiff_{z_0}(\xi)=0 \text{ for all } g\in G,
\]
and hence $\overline{\del_g\xi}$ is well defined as well, and a direct computation verifies that
\[
\del_{\bar{g}_1}\circ \dots \circ \del_{\bar{g}_d} \bar{\xi} =\overline{\del_{{g}_1}\circ \dots \circ \del_{{g}_d} \xi},
\]
from which it follows that $\bar{\xi}$ is a polynomial of degree  $\deg(\xi)$. 
\end{proof}
\begin{rem}\label{rem:normal-subgroup-poly}
The proof of Lemma \ref{lem:poly-on-quotient} also shows the following general fact: if $G$ is a group and $H\leq G$ is a normal subgroup, then any  $\xi\in \Pol^0(G)$ with the property that $\rdiff_h\xi=0$ for all $h\in H$ descends to a polynomial $\bar{\xi}\in \Pol^0(G/H)$ of the same degree.
\end{rem}

\section{Nilpotent groups and their cohomology} \label{sec:Malcev}
In this section we collect the necessary prerequisites concerning nilpotent groups and their cohomology.
 For general background on nilpotent groups we refer to \cite{Kirillov:Introduction,Corwin-Greenleaf}.\\

%Let $G$ be a group and recall that the commutator of two elements $g,h\in G$ is defined as the element $[g,h] := g^{-1}h^{-1}gh \in G$. For subgroups $H,K \subseteq G$ the commutator $[H,K]$ is the subgroup generated by all elements $[h,k]$ with $h\in H$ and  $k\in K$.
%\begin{definition}[central series]
%Recall that a \emph{central series} $\mathscr{G} = (G_{i})_{i\in \mathbb{N}}$ in a (topological) group $G$ is a decreasing sequence of (closed) normal subgroups $G_{i}\unlhd G$, with $G=G_{1}$ and such that $[G_{i},G] \subseteq G_{i+1}$ for all $i\in \mathbb{N}$.
%\end{definition}
%The \emph{lower central series} of a topological group $G$ is the (decreasing) sequence $\mathscr{G}_{\text{min}} = (G_{[i]})_{i\in \NN}$ of subgroups of $G$ defined recursively by $G_{[1]} := G$ and $G_{[i+1]} := \overline{[G,G_{[i]}]}$. Observe that each $G_{[i]}$ is a characteristic subgroup of $G$; i.e.~globally preserved by any automorphism of $G$. Further, for any central series $\mathscr{G} = (G_i)_{i\in \NN}$ in $G$ we have, by construction, $G_{[i]} \leq G_i$ and moreover one may prove that $[G_{[i]},G_{[j]}]\leq G_{[i+j]}$ for all $i,j\in \NN$ (see e.g.~\cite[Corollary 0.31]{Baumslag:Nilpotent}). 
Recall first that a group $G$ is called \emph{nilpotent} if $G_{[d]} = \{\bbb\}$ for some $d\in \mathbb{N}$, where  $G_{[d]}$ denotes the $d$'th group in the lower central series (see e.g.~the remarks preceding Definition \ref{def:degree-def} for more details);
in this case the \emph{(nilpotency) class} of $G$ is defined as the number $\operatorname{cl}(G) := \min \{ d \mid G_{[d]} = \{\bbb\} \} - 1$. Note that in the special case where $G$ is a connected, simply connected, nilpotent  Lie group, the algebraically defined lower central series automatically consists of closed subgroups  \cite[XII, Theorem 3.1]{Hochschild:Structure-of-Lie-grps}.\\
 %In the remainder of this section we recall some fundamental results about nilpotent (torsion-free) groups and real Lie algebras, and introduce the class of groups that will be our primary object of study in the sequel.\\

Secondly, recall that for any (real) Lie algebra $\mathfrak{g}$, the lower central series is defined (analogously to the definition for groups) by $\mathfrak{g}_{[i+1]}= \operatorname{span}_{\RR} [\mathfrak{g},\mathfrak{g}_{[i]}]$. Let $G$ be a connected, simply connected (henceforth abbreviated `csc'), nilpotent Lie group, and denote its Lie algebra $\mathfrak{g}$. Then for each $i$, one has that $G_{[i]}$ is a Lie subgroup of $G$ with Lie algebra $\mathfrak{g}_{[i]}$. Moreover,  for such $G$, the exponential map $\exp\colon \mathfrak{g} \rightarrow G$ is a \emph{global} diffeomorphism onto $G$, and it therefore also induces a diffeomorphism $\mathfrak{g}_{[i]} / \mathfrak{g}_{[i+1]} \rightarrow G_{[i]} / G_{[i+1]}$ for each each $i$. A (strong) \emph{Mal'cev basis} for $\mathfrak{g}$ (with respect to the lower central series) is a linear basis $( X_{i,j} )_{(i,j)\in\Bultiindex{B}{\multirank(G)}}$ of $\mathfrak{g}$, such that for each $i$, $X_{i,j} \in \mathfrak{g}_{[i]}$ for all $j$, and the set $\{X_{i,j}\}_j$ projects to a linear basis of $\mathfrak{g}_{[i]}/\mathfrak{g}_{[i+1]}$ (see Section \ref{sec:notation} for a definition of the multiindex $\Bultiindex{B}{\multirank(G)}$). Such a basis always exists \cite[Section 1.1]{Corwin-Greenleaf}  and once a Mal'cev basis is chosen,   the map
\begin{equation} \label{eq:Malcev_coordinates_def}
\mathfrak{g} \owns \sum_{(i,j)\in \Bultiindex{B}{\multirank(G)}} t_{i,j}X_{i,j} \longmapsto \prod_{(i,j)\in \Bultiindex{B}{\multirank(G)}} \exp(t_{i,j}X_{i,j})\in G,
\end{equation}
is a diffeomorphism as well \cite[Section 1.2]{Corwin-Greenleaf}, and the induced global coordinate system on $G$ is called (the system of) \emph{Mal'cev coordinates} relative to the chosen Mal'cev basis. Abusing terminology slightly, we will therefore also refer to the family $\{g_{i,j} := \exp(X_{i,j}) \mid (i,j) \in \Bultiindex{B}{\multirank(G)} \}$ as a Mal'cev basis of $G$ and denote $\exp(tX_{ij})$ by $g_{ij}^t$ so that each element  $g\in G$ can be uniquely written as
\[
g=\prod_{(i,j)\in \Bultiindex{B}{\multirank(G)}} g_{ij}^{t_{i,j}}, \qquad t_{ij}\in \RR.
\]
Here, and above, we use the ordered product notation introduced in the Section \ref{sec:notation}. Given any Mal'cev basis $(X_{i,j})_{(i,j)\in \Bultiindex{B}{\multirank(G)}}$ of $\mathfrak{g}$, for all $(i,j),(s,t)\in \Bultiindex{B}{\multirank(G)}$ the $ c^{i,j,s,t}_{k,l}\in \RR$ such that
\begin{equation}
[X_{i,j},X_{s,t}] = \sum_{(k,l)\in \Bultiindex{B}{\multirank(G)}} c^{i,j,s,t}_{k,l} X_{k,l}.\notag
\end{equation}
are called the structure constants of $\mathfrak{g}$ (with respect to the chosen basis), and in his groundbreaking paper \cite{Mal'cev}, Mal'cev proved the following result:

\begin{theorem}[Mal'cev] \label{thm:Malcev_rational}
A csc nilpotent Lie group $G$ has a lattice if and only if it has a Mal'cev basis with rational structure constants. Furthermore, every lattice $\Gamma$ in $G$ is cocompact and there exists a Mal'cev basis $(X_{ij})_{(i,j)\in \Bultiindex{B}{\multirank(G)}}$ which is based in $\Gamma$, in the sense that 
\[
\Gamma=\left\{ \prod_{(i,j)\in  \Bultiindex{B}{\multirank(G)}} g_{ij}^{m_{ij}}\  \Big{\rvert} \ m_{ij}\in \ZZ  \right\}.
\]
\end{theorem}

Any lattice in a csc nilpotent Lie group is necessarily torsion-free and finitely generated, and Mal'cev also proved  that the converse is true:

\begin{theorem}[Mal'cev]\label{thm:malcev-1}
Let $\Gamma$ be a finitely generated, torsion-free (discrete) nilpotent group. Then there exists a csc nilpotent Lie group $G$ such that $\Gamma$ embeds as a lattice in $G$. Furthermore, the embedding is unique up to natural isomorphism; that is, given any two such embeddings $i\colon \Gamma \rightarrow G$ and $j\colon \Gamma \rightarrow H$, there is an isomorphism $\psi\colon G \rightarrow H$ intertwining $i$ and $j$.
\end{theorem}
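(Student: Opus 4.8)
The plan is to produce $G$ concretely via Mal'cev coordinates. First I would use that a finitely generated, torsion-free nilpotent group is poly-(infinite cyclic): refining the lower central series and passing to isolators (root closures) of its terms yields a central series $\Gamma = \Gamma^{(1)} \supseteq \cdots \supseteq \Gamma^{(n+1)} = \{\bbb\}$ with each quotient $\Gamma^{(k)}/\Gamma^{(k+1)}$ infinite cyclic. Choosing $t_k \in \Gamma^{(k)}$ projecting to a generator gives coordinates in which every element is uniquely $t_1^{a_1}\cdots t_n^{a_n}$ with $a_k\in\mathbb{Z}$. By P.~Hall's theorem the coordinates of a product, written $(t_1^{a_1}\cdots t_n^{a_n})(t_1^{b_1}\cdots t_n^{b_n}) = t_1^{p_1(a,b)}\cdots t_n^{p_n(a,b)}$, are polynomials $p_k$ in $(a,b)$ with rational coefficients (and similarly for inverses and integer powers). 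I would then let $G$ be $\mathbb{R}^n$ equipped with the multiplication given by these very polynomials, now evaluated on real arguments. Since the group axioms are polynomial identities holding on the Zariski-dense subset $\mathbb{Z}^n$, they persist on all of $\mathbb{R}^n$; the resulting $G$ is a nilpotent Lie group diffeomorphic to $\mathbb{R}^n$, hence connected and simply connected, and $\Gamma = \mathbb{Z}^n \hookrightarrow \mathbb{R}^n = G$ is a discrete subgroup with fundamental domain $[0,1)^n$, hence a cocompact lattice. (That every lattice is then cocompact is exactly Theorem~\ref{thm:Malcev_rational}.)

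\textbf{Uniqueness.} I would reduce this to a single extension lemma: if $\Gamma$ is a lattice in a csc nilpotent Lie group $G$ and $\phi\colon \Gamma \to H$ is an abstract homomorphism into another csc nilpotent Lie group $H$, then $\phi$ extends uniquely to a Lie group homomorphism $\bar\phi\colon G \to H$. Granting this, apply it to $\phi = j\circ i^{-1}$ on the lattice $i(\Gamma) \subseteq G$ to obtain $\psi\colon G\to H$ with $\psi\circ i = j$, and to $i\circ j^{-1}$ to obtain $\psi'\colon H\to G$. Then $\psi'\circ\psi$ and $\psi\circ\psi'$ both extend identity maps on the respective lattices, so by the uniqueness half of the lemma they are the identities; thus $\psi$ is the desired intertwining isomorphism. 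To prove the lemma, let $\mathfrak{g},\mathfrak{h}$ be the Lie algebras, with $\exp$ a global diffeomorphism and inverse $\log$. I first extend $\phi$ from $\Gamma$ to the radicable hull $\Gamma\otimes\mathbb{Q} = \exp(\mathfrak{g}_\mathbb{Q})$, where $\mathfrak{g}_\mathbb{Q}$ is the rational span of a Mal'cev basis (a $\mathbb{Q}$-form of $\mathfrak{g}$, by Theorem~\ref{thm:Malcev_rational}): since $H$ is uniquely divisible and torsion-free, and every element of $\Gamma\otimes\mathbb{Q}$ has a power in $\Gamma$, its image is forced as the unique root of the corresponding image in $H$, giving existence and uniqueness of this first extension. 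I then set $\lambda := \log\circ\,\phi\circ\exp\colon \mathfrak{g}_\mathbb{Q}\to\mathfrak{h}$, and extend $\mathbb{R}$-linearly and exponentiate: $\bar\phi := \exp\circ\,(\lambda\otimes\mathrm{id}_\mathbb{R})\circ\log$.

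\textbf{The crux.} I expect the genuine obstacle to be the differentiation step: showing that $\lambda\colon\mathfrak{g}_\mathbb{Q}\to\mathfrak{h}$ is $\mathbb{Q}$-linear and bracket-preserving, purely algebraically. Here one invokes the Baker--Campbell--Hausdorff formula — a finite bracket series with rational coefficients — so that the homomorphism identity for $\phi$ becomes the identity $\lambda(\mathrm{BCH}(\xi,\eta)) = \mathrm{BCH}(\lambda\xi,\lambda\eta)$ on $\mathfrak{g}_\mathbb{Q}$; substituting rational rescalings $\xi\mapsto t\xi$ (available by radicability) and comparing lowest-order homogeneous terms forces additivity, $\mathbb{Q}$-homogeneity, and $\lambda([\xi,\eta])=[\lambda\xi,\lambda\eta]$. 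Once $\lambda$ is a $\mathbb{Q}$-Lie-algebra map, tensoring with $\mathbb{R}$ preserves the (linear) bracket identities, so $\lambda\otimes\mathrm{id}_\mathbb{R}$ is an $\mathbb{R}$-Lie-algebra homomorphism and $\bar\phi$ is a genuine Lie group homomorphism extending $\phi$; its uniqueness is immediate because $\Gamma\otimes\mathbb{Q}$ is dense in $G$ and $\bar\phi$ is continuous. The remaining ingredients — Hall's rational polynomials, the Zariski-density argument for the group axioms, and unique divisibility of $H$ — are either classical or formal.
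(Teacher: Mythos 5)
Your existence construction (isolator-refined central series, Hall's rational multiplication polynomials, Zariski density of the integer points) is the classical one and is sound, and so is the formal reduction of uniqueness to the extension lemma. The genuine gap sits exactly where you flag the crux. At that stage the map $\lambda=\log\circ\,\phi\circ\exp\colon\mathfrak{g}_{\mathbb{Q}}\to\mathfrak{h}$ is known to satisfy only two identities: $\lambda(\mathrm{BCH}(\xi,\eta))=\mathrm{BCH}(\lambda\xi,\lambda\eta)$, and $\lambda(t\xi)=t\lambda(\xi)$ for $t\in\mathbb{Q}$ (from unique roots). It is \emph{not} known to be continuous, additive, or polynomial, so the left-hand side $\lambda(\mathrm{BCH}(t\xi,t\eta))$ has no expansion in powers of $t$ whose coefficients you could compare with those of $\mathrm{BCH}(t\lambda\xi,t\lambda\eta)$. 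Indeed, after extracting one factor of $t$ by $\mathbb{Q}$-homogeneity the identity reads
\begin{equation}
\lambda\Bigl(\xi+\eta+\tfrac{t}{2}[\xi,\eta]+\cdots\Bigr)\;=\;\lambda\xi+\lambda\eta+\tfrac{t}{2}[\lambda\xi,\lambda\eta]+\cdots,\qquad t\in\mathbb{Q}\setminus\{0\},
\end{equation}
and to extract $\lambda(\xi+\eta)$ from the left side you must either let $t\to 0$ (unavailable: $\lambda$ is a priori a bare map on a $\mathbb{Q}$-vector space, with no continuity) or distribute $\lambda$ over the sum inside the argument, which is precisely the additivity being proved. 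As stated, the comparison of lowest-order terms is circular. A milder instance of the same optimism occurs one step earlier: unique divisibility of $H$ forces the \emph{values} of any extension of $\phi$ to $\Gamma\otimes\mathbb{Q}$, hence its uniqueness, but it does not show that the forced map is multiplicative; that too is a real, if classical, argument.

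Both gaps are fixable, so the architecture survives. For the crux, the identity that genuinely comes for free is additivity on \emph{commuting} pairs: if $[\xi,\eta]=0$ then $\exp\xi,\exp\eta$ commute, hence so do their $\phi$-images, hence $[\lambda\xi,\lambda\eta]=0$ (commuting elements of a csc nilpotent Lie group have commuting logarithms, since $e^{\operatorname{ad}A}-\operatorname{id}=\operatorname{ad}A\cdot u$ with $u$ unipotent), and then $\lambda(\xi+\eta)=\mathrm{BCH}(\lambda\xi,\lambda\eta)=\lambda\xi+\lambda\eta$. Anchoring on this and inducting on the nilpotency class (working modulo the last term of the lower central series) makes your rescaling argument run; this is essentially Mal'cev's original proof and takes a page, not a line. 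Alternatively, and most naturally in the context of this paper, you can delete the differentiation step entirely: the graph of $\phi$ is a discrete, finitely generated, torsion-free nilpotent subgroup of $G\times H$; its Mal'cev hull $L$, i.e.\ the smallest connected closed subgroup containing it --- the same tool \cite[Theorem 5.4.3]{Corwin-Greenleaf} that the paper's own existence sketch invokes --- has dimension equal to the Hirsch length of $\Gamma$, which is $\dim G$, and projects onto $G$ (its image is a connected subgroup containing $\Gamma$, whose hull is all of $G$) with discrete kernel; since $G$ is simply connected this projection is an isomorphism, so $L$ is the graph of a continuous homomorphism $G\to H$ extending $\phi$, and minimality of the hull gives uniqueness. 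This bypasses BCH, the $\mathbb{Q}$-hull, and all continuity issues at once. For calibration: the paper itself proves neither half in detail --- it sketches existence via a free nilpotent cover and defers the rest to \cite{Malcev} and \cite{Baumslag:Nilpotent} --- so your uniqueness argument is extra material relative to the paper, and it needs the above repair to stand.
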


The csc nilpotent Lie group $G$ in Theorem \ref{thm:malcev-1} is called the \emph{Mal'cev completion} of $\Gamma$, and is occasionally denoted $ \Gamma \otimes \mathbb{R}$. For a proof of the theorem see \cite{Mal'cev}, or for an alternative approach \cite{Baumslag:Nilpotent} (which, in turn,  is based on \cite{Jennings:Residual_nilpotence}). The above motivates the following:
% we will indicate an approach below as well.
%Finally, we note that any locally compact, compactly generated, totally disconnected nilpotent group contains a neighbourhood basis of the identity consisting of compact open \emph{normal} subgroups \cite{Willis:Nilpotent_td}. Thus, up to quotienting out a totally disconnected compact group, such groups can be studied via discrete nilpotent groups. The techniques developed in this paper, based on cohomology with coefficients in vector spaces, essentially do not ``see'' compact subgroups; this motivates the following definition:

\begin{definition}[Mal'cev group]
Let $G$ be a locally compact, compactly generated topological group. We will say that $G$ is a \emph{Mal'cev group} if it satisfies either of the following two equivalent criteria:
\begin{enumerate}[(i)]
\item $G$ embeds as a closed, cocompact subgroup in a csc nilpotent Lie group,

\item $G$ is a torsion-free, nilpotent Lie group.
\end{enumerate}
If $G$ is a Mal'cev group then the csc nilpotent Lie group into which it embeds cocompactly is uniquely determined up to isomorphism and is, in analogy with the discrete case, called the (real) \emph{Mal'cev completion} of $G$ and denoted $G\tens \RR$. If $G$ is a Mal'cev group then $G_i:=G\cap (G\tens \RR)_{[i]}$ defines a central series in $G$ which we will refer to as the \emph{Mal'cev central series}. 
\end{definition}
\begin{rem}
Since csc nilpotent Lie groups are torsion-free, the implication from (i) to (ii) is clear and the fact that (ii) implies (i) is due to Mal'cev in the case when $G$ is discrete and 
Wang in general; see comments right before Proposition 4.6 in \cite{Wang:Discrete_subgroups} or \cite[Theorem 2.20]{Raghunathan}. 
\end{rem}

We will need the following additional facts about the class of Mal'cev groups.
\begin{enumerate}

\item Reasoning exactly as in the discrete case (see e.g.~\cite[Chapter 5]{Corwin-Greenleaf}) one may prove that any Mal'cev group $G$ admits a Mal'cev basis for $G\tens \RR$ (strongly) based in $G$; that is, there exists a Mal'cev basis $(X_{ij})_{(i,j)\in  \Bultiindex{B}{\multirank(G\tens \RR)}}$ such that
\[
G=\left\{ \prod_{(i,j)\in  \Bultiindex{B}{\multirank(G\tens \RR)}} \exp(m_{ij}X_{ij})\  \Big{\rvert} \ m_{ij}\in Z_{ij}  \right\},
\]
where the sets $Z_{ij}\subset \RR$ are equal to either $\ZZ$ or $\RR$. Abusing notation slightly,  we will refer to the elements $g_{ij}:=\exp(X_{ij})$ as a Mal'cev basis for $G$. 
\item
By fixing a Mal'cev basis for $G$ we also obtain  isomorphisms of abelian groups
\[
G_i/G_{i+1} \simeq \oplus_{j=1}^{n_i} Z_{ij} \simeq \ZZ^{m_i} \oplus\RR^{m_i'},
\]
where $m_i,m_i'\in \NN_0$ sum up to $n_i:=\dim(\mfg_{[i]}/\mfg_{[i+1]})$;  here $\mfg$ denotes the Lie algebra of $G\tens \RR$ and $(G_i)_i$ is the Mal'cev central series defined above.

\item Since  a Mal'cev group $G$ is nilpotent, it always has non-trivial center, and upon choosing a Mal'cev basis for $G$, one  can always find a central subgroup $Z$ such that $Z$ is of the form $Z:=\{g_{\text{cl}(G),j_0}^m\mid m\in Z_{\text{cl}(G),j_0} \}$, where $Z_{\text{cl}(G),j_0}$ is either $\ZZ$ or $\RR$ and such that $G/Z$ is again a Mal'cev group with Mal'cev basis $(\bar{g}_{ij})_{(i,j)\neq (\text{cl}(G),j_0)}$. 
In particular, we get a natural, continuous cross section $\sigma\colon G/Z \to G$ of the quotient homomorphism by setting
\[
\sigma \colon \prod_{(i,j)\neq (\text{cl}(G),j_0)}\bar{g}_{ij}^{t_{ij}} \longmapsto \prod_{(i,j)\neq (\text{cl}(G),j_0)}g_{ij}^{t_{ij}}
\]
This will be of importance in the sequel, as it is a necessary requirement for using the Hochschild-Serre spectral sequence in group cohomology \cite[III,  n$^\text{o}$ 5.1]{Guichardet:Cohomologie}.
\end{enumerate}

\begin{definition}[length and rank]
Let $G$ be a %residually 
Mal'cev group. We denote the length of the (Mal'cev-, equivalently lower-) central series by $\operatorname{cl}(G)$.%, if it is finite. 
We denote by $\operatorname{rk}(G)$ the \emph{rank} of $G$, defined by $\operatorname{rk}(G) := \dim_{\mathbb{R}} \mathfrak{g}/\mathfrak{g}_{[2]}$, where $\mathfrak{g}$ is the Lie algebra of $G\otimes \mathbb{R}$. That is, we have $G_1/G_2 \cong \mathbb{R}^{m_1}\times \mathbb{Z}^{m_1'}$ for some uniquely determined $m_1,m_1'\in \NN_0$ and  $\operatorname{rk}(G) = m_1+m_1'$.
\end{definition}

\subsection{Cohomology of Mal'cev groups} \label{sec:Cohom_nilpotent}
In this section we gather the results needed about the cohomological properties of Mal'cev groups, which turn out, not surprisingly, to be very much alike those for csc nilpotent Lie groups.

\begin{prop}\label{prop:malcev_cohom_fd}
Mal'cev groups are cohomologically finite dimensional.
\end{prop}

\begin{proof}
Let $G$ be a Mal'cev group and let $\tilde{G}$ be the csc, nilpotent Lie group in which $G$ is cocompact. First note that the cohomology of $G$ stops after degree $d:=\dim(\tilde{G})$; indeed, for a Fr{\'e}chet $G$-module $\E$, the Shapiro lemma \cite[III, Proposition 4.1]{Guichardet:Cohomologie} gives
\[
\Cohom^n(G,\E)\simeq \Cohom^n(\tilde{G}, \operatorname{Ind}(\E)), 
\]
and for $n> d$ the right hand side vanishes (for instance by the van Est theorem \cite[III, Corollaire 7.2]{Guichardet:Cohomologie}).
Let $V$ be a continuous, finite dimensional $G$-module; we prove the statement by induction on $d=\dim(\tilde{G})$. In the case $d=1$, we have $G\simeq \RR$ or $G\simeq \ZZ$ and both of these are cohomologically finite dimensional. 
For the inductive step, let $G$ be a Mal'cev group with $d$-dimensional ambient Lie group and take  a central subgroup $Z\leq G$, isomorphic to either $\ZZ$ or $\RR$, such that $G/Z$ is again a Mal'cev group whose Mal'cev completion has dimension $d-1$. Then, as we just saw, $Z$ is cohomologically finite dimensional and thus $\Cohom^q(Z, V)$ is finite dimensional and, in particular, Hausdorff \cite[III, Proposition 2.4]{Guichardet:Cohomologie}, so the Hochschild-Serre spectral sequence exists \cite[III,  n$^\text{o}$ 5.1]{Guichardet:Cohomologie} and has $E_2$-term 
\[
E^{pq}_2= \Cohom^p(G/Z, \Cohom^q(Z,V)).
\]
So $E^{pq}_2=0$ whenever $q>2$ or $p>d-1$ and all non-vanishing terms are finite dimensional by the induction hypothesis; thus also $\Cohom^n(G,V)$ is finite dimensional.
\end{proof}

We now recall some well-known results concerning the continuous cohomology of nilpotent groups \cite{Delorme:1-Cohomologie,Shalom:Harmonic,Bader-Rosendal-Sauer:Vanishing}. In order to have the standard operator theoretic tools at our disposal and to comply with the standing assumption in \cite{Guichardet:Cohomologie} that vector spaces are complex,  in  the rest of this section the terminology `$\mathcal{H}$ is a unitary Hilbert $G$-module' will mean that $\mathcal{H}$ is %either 
a complex Hilbert space with a continuous, unitary $G$-action.
This, however, is not a serious restriction since in all our applications we will be able to pass from the setting of real topological vector spaces to the complex ditto via the standard complexification procedure, as one has 
$\PolCohom{d}{n}{}{G}{\mathcal{E}\otimes_{\RR} \mathbb{C}} \cong \PolCohom{d}{n}{}{G}{\mathcal{E}} \otimes_{\RR} \mathbb{C}$. \\

We first recall Shalom's property $H_T$ introduced in \cite{Shalom:Harmonic}.  Here, as usual, $\Cohomred^n(G,\mathcal{H})$ denotes the reduced cohomology; i.e.~the maximal Hausdorff quotient of the ordinary cohomology.

\begin{definition}[Property $H_T$ {\cite{Shalom:Harmonic}}] \label{def:H_T}
A lcsc group $G$ is said to have \emph{property $H_T$} if for any continuous unitary  $G$-module $\mathcal{H}$ with $\mathcal{H}^G = 0$ one has
%\begin{equation}
$\Cohomred^n(G,\mathcal{H}) = 0$
%\end{equation}
for any $n\in \NN$. 
\end{definition}

 %When $\mathscr{M}$ is the class of continuous unitary Hilbert $G$-modules, we will often suppress the explicit reference to $\mathscr{M}$ and simply say that $G$ has property $H_T(I)$, or even just property $H_T$ if $I=\NN$ and no confusion can arise.
A well known result, essentially due to Delorme, concerning the vanishing of cohomology for nilpotent (Lie) groups, ensures that such groups have property $H_T$.
The classical form of the statement is the following: 

\begin{theorem}[{\cite[Theorem 10.1]{Blanc:Cohomologie}}]  \label{thm:Delorme}
Let $G$ be a csc nilpotent Lie group. For every irreducible, continuous unitary Hilbert $G$-module $\mathcal{H}$ such that $\mathcal{H}^G = 0$, we have $\Cohom^n(G,\mathcal{H}) = 0$ for all $n\in \mathbb{N}_0$. In particular $G$ has property $H_T$.
\end{theorem}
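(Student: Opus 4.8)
The plan is to establish the vanishing of the ordinary continuous cohomology for irreducible modules by induction on the nilpotency class $c := \operatorname{cl}(G)$, and then to deduce property $H_T$ from the irreducible case. Working in the complex case throughout (the real case follows by complexification, as noted above), I use the structure of a Mal'cev group: by criterion (ii) of the definition $G$ carries a central series with subquotients isomorphic to $\mathbb{R}^{m_i}\times\mathbb{Z}^{m_i'}$, and one has at hand a closed \emph{central} subgroup $Z \cong \mathbb{R}^a\times\mathbb{Z}^b$ (for instance $Z := G_{[c]}$, the last nontrivial term of the lower central series) with $G/Z$ again a Mal'cev group, of class $c-1$. The engine of the induction is the continuous-cohomology version of the Lyndon--Hochschild--Serre spectral sequence $E_2^{p,q} = H^p(G/Z, H^q(Z,\mathcal{H})) \Rightarrow H^{p+q}(G,\mathcal{H})$ attached to the central extension $1 \to Z \to G \to G/Z \to 1$, which is available within the framework of \cite{Guichardet:Cohomologie}.

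For the inductive step, fix an irreducible $\mathcal{H}$ with $\mathcal{H}^G = 0$; then $\pi$ is nontrivial, and since $Z$ is central, Schur's lemma forces each $z\in Z$ to act by a scalar, producing a character $\chi\colon Z \to \mathbb{T}$. I distinguish two cases. If $\chi$ is \emph{trivial}, then $\pi$ descends to an irreducible representation of $G/Z$ with $\mathcal{H}^{G/Z} = \mathcal{H}^G = 0$; the inductive hypothesis gives $H^p(G/Z,\mathcal{H}) = 0$ for all $p$, while $H^q(Z,\mathcal{H}) \cong H^q(Z,\mathbb{R})\otimes\mathcal{H}$ as (trivial-on-$Z$) $G/Z$-modules, so each $E_2^{p,q} \cong H^q(Z,\mathbb{R})\otimes H^p(G/Z,\mathcal{H})$ vanishes and $H^n(G,\mathcal{H}) = 0$. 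If $\chi$ is \emph{nontrivial}, I instead show $H^q(Z,\mathcal{H}) = 0$ for \emph{all} $q$, which kills the entire $E_2$-page irrespective of the $G/Z$-action and again yields $H^n(G,\mathcal{H}) = 0$.

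Both the vanishing $H^q(Z,\mathcal{H}) = 0$ in the nontrivial case and the base case $c = 1$ reduce to the computation $H^q(\mathbb{R}^a\times\mathbb{Z}^b,\mathbb{C}_\chi) = 0$ for a nontrivial unitary character $\chi$ (the coefficient is $\mathcal{H}$ as a $\chi$-isotypic, i.e.\ scalar, $Z$-module, which at the level of the cochain complex behaves as $\mathbb{C}_\chi$ tensored with a multiplicity space). Since $\chi$ is nontrivial it is nontrivial on some $\mathbb{R}$- or $\mathbb{Z}$-factor $Z_0$, and there the operator $\chi-1$ (respectively the infinitesimal generator $i\lambda$ with $\lambda\neq 0$ in the $\mathbb{R}$-case) is invertible, giving an explicit contracting homotopy of $C(Z_0^\bullet,\mathbb{C}_\chi)$; a Künneth argument propagates the acyclicity to all of $Z$. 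The base case is exactly this, an irreducible $\mathcal{H}$ of the abelian group $\mathbb{R}^a\times\mathbb{Z}^b$ with $\mathcal{H}^G=0$ being a single nontrivial character.

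Finally, to obtain property $H_T$, I decompose an arbitrary continuous unitary module $\mathcal{H}$ with $\mathcal{H}^G=0$ as a direct integral of irreducibles over a standard measure space (legitimate since nilpotent lcsc groups are type I); the hypothesis $\mathcal{H}^G=0$ forces the trivial representation to carry zero measure, so almost every fibre is a nontrivial irreducible, hence by the first part has vanishing ordinary cohomology. \textbf{I expect this last reduction to be the main obstacle}: unlike ordinary cohomology, the \emph{reduced} cohomology of a direct integral is not governed fibrewise in a purely formal way, since the primitives witnessing that a cocycle is a coboundary may have norms blowing up as the fibre approaches the trivial representation. This is precisely the accumulation of the central subgroup's spectral measure at the trivial character, the very phenomenon separating reduced from unreduced cohomology. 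My plan to resolve it is to return, via the central subgroup $Z$ and the same spectral-sequence bookkeeping, to a \emph{quantitative} statement about the $Z$-action, controlling the coboundary operators uniformly in the fibre so that $\underline{H}^n(G,\mathcal{H})$ vanishes although the integrand vanishes only pointwise; this uniform/spectral estimate is the technical heart of the theorem.
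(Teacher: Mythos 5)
Your treatment of the irreducible case is essentially the paper's own argument: a central subgroup with subquotient structure $\mathbb{R}^a\times\mathbb{Z}^b$, Schur's lemma producing a character of it, the trivial/nontrivial character dichotomy, and the Hochschild--Serre spectral sequence in continuous cohomology driving an induction. (The paper inducts on $\dim_{\mathbb{R}}\mathfrak{g}$ with a one-dimensional $Z\cong\mathbb{Z}$ or $\mathbb{R}$, and in the nontrivial-character case applies Guichardet's Proposition III.3.1 directly to $G$ rather than first killing $H^q(Z,\mathcal{H})$ and then invoking the spectral sequence; these differences are cosmetic.) One slip in your setup: $Z:=G_{[c]}$ is a bad choice, because $G/G_{[c]}$ need not be torsion free, hence need not be Mal'cev, and your induction then has nothing to apply to. For instance, for $\Gamma=\langle x,y,z \mid z \text{ central},\ [x,y]=z^2\rangle$ (a discrete Mal'cev group of class $2$) one has $\Gamma/\Gamma_{[2]}\cong\mathbb{Z}^2\times(\mathbb{Z}/2)$. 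You must instead take the last nontrivial term of the central series from criterion (ii) of the definition (the Mal'cev central series), which is still central and of the form $\mathbb{R}^a\times\mathbb{Z}^b$, and whose quotient is again Mal'cev of smaller class.

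The genuine gap is the deduction of property $H_T$, which you flag but do not close. Two points. First, your justification of the direct integral decomposition ("nilpotent lcsc groups are type I") is false for discrete Mal'cev groups: by Thoma's theorem a discrete group is type I only if it is virtually abelian. Fortunately type I is irrelevant here, since any separable continuous unitary module of a second countable group admits some direct integral decomposition into irreducibles, and a positive-measure set of trivial fibres would produce nonzero invariant vectors, so $\mathcal{H}^G=0$ does force a.e.\ fibre to be a nontrivial irreducible. Second, and more seriously, your proposed resolution -- controlling the coboundary operators \emph{uniformly} in the fibre -- is not only unexecuted but is the wrong tool: no uniform bound can hold as the fibres approach the trivial representation, which is exactly why the ordinary cohomology of the integral can be nonzero while every fibre is acyclic (e.g.\ $\mathbb{Z}$ acting on $L^2_0(\mathbb{T})$, where $H^1(\mathbb{Z},\mathbb{C}_\chi)=0$ for every $\chi\neq 1$ yet $H^1(\mathbb{Z},L^2_0(\mathbb{T}))\neq 0$). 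What is needed, and what the paper invokes, is the soft, non-quantitative statement that a.e.-fibrewise vanishing of $H^n(G,\mathcal{H}_t)$ already implies $\underline{H}^n\bigl(G,\int^{\oplus}\mathcal{H}_t\bigr)=0$; this is Guichardet's duality argument (cited in the paper for degree one, higher degrees being analogous), which replaces uniform estimates by a Hahn--Banach/disintegration argument about the closure of the space of coboundaries. Until that step is supplied, by proof or by citation, the clause "in particular $G$ has property $H_T$" -- the part of the theorem the paper actually uses later -- remains unproved; you also omit the routine reduction of a general (nonseparable) module to an increasing net of separable invariant subspaces, which the paper notes.
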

Note that the latter statement in the theorem above does indeed follow from the former, since any unitary representation is a direct integral of irreducible representations, and property $H_T$ therefore follows from \cite[Theorem 7.2]{Blanc:Cohomologie}.  It will be convenient for us to have the following alternate form of Theorem \ref{thm:Delorme}, which at the same time generalizes the statement to the class of Mal'cev groups.

\begin{theorem}\label{thm:Delorme_prime}
If $G$ is a Mal'cev group and $\H$ is a unitary $G$-module, then there exists an increasing sequence $(\H_i)_{i\in \NN}$ of closed, $G$-invariant subspaces of $\H$ with dense union, such that $\H^G\subset \H_i$ for each $i\in \NN$ and such that the inclusion map induces an isomorphism $\Cohom^n(G,\H^G)\simeq \Cohom^n(G,\H_i)$ for each $n\in \NN$. 
\end{theorem}
Note that it is part of the conclusion that $\Cohom^n(G,\mathcal{H}_i)$ is Hausdorff if $\mathcal{H}^G$ is finite dimensional, since, in this case, $\Cohom^n(G,\H^G)$ is finite dimensional (and thus Hausdorff  \cite[III, Proposition 2.4]{Guichardet:Cohomologie}) by Proposition \ref{prop:malcev_cohom_fd}. For the proof of  Theorem \ref{thm:Delorme_prime} the following lemma is convenient.

\begin{lem}\label{existence-of-subspace-seq}
Let $G$ be lcsc group, $\H$ be a unitary $G$-module and assume that there exists a central element $z\in G$ such that the corresponding unitary $u\in \BB(\H)$ satisfies that $T:=u-1$ is injective. Then there exists an increasing sequence $(\H_i)_{i\in \NN}$ of closed, $G$-invariant subspaces with dense union and such that $\Cohom^n(G,\H_i)=0$ for each $i\in \NN$ and $n\in \NN_0$.
\end{lem}

\begin{proof} Denote the representation by $\pi$.
Since $u:=\pi(z)$ is unitary, the operator $T:=u-\bbb$ is normal and hence  the Borel functional calculus may be applied to $T$. As $z$ is central, $T$ commutes with $\pi(G)$  and hence so do its spectral projections. Because $T$ is assumed to be injective, its spectral projections $P_n:=\chi_{\sigma(T)\setminus \{z\mid |z|\leq \sfrac{1}{n} \}}(T)$ are increasing and converging strongly to $\bbb$,  
and since each $P_n$ commutes with $\pi(G)$, its range $\H_n:=P_n(\H)$ is a closed $G$-invariant subspace; we denote the restricted representation of $G$ on $\H_n$ by $\pi_n$. Since $u$ is unitary, the injectivity of $T$ implies that its range is dense
and from this it follows that the operator $\pi_n(z)-1$, which is simply $T\restriction_{\H_n}$, has dense range as well. The operator  $\pi_n(z)-1$ is furthermore bounded away from zero,  and  thus invertible on $\H_n$, and by \cite[III, Proposition 3.1]{Guichardet:Cohomologie} this implies that $\Cohom^k(G,\H_n)$ vanishes for each $k\in \NN_0$ and $n\in \NN$.
\end{proof}

We are now ready to give the proof of Theorem \ref{thm:Delorme_prime}. In the proof we will several times use the fact that for a unitary $\ZZ$-module $\H$, having $\H^{\ZZ}=\{0\}$ is equivalent with $u_1-\bbb$ acting injectively (here $u_1$ is the unitary corresponding to $1\in \ZZ$), a fact that not true for unitary $\RR$-modules, 
which  accounts for the distinction between discrete and continuous one dimensional subgroups present in the proof.
\begin{proof}[{Proof of Theorem  \ref{thm:Delorme_prime}}]
By splitting $\H$ as $\H=\H^G \oplus \H^{G\perp}$, it suffices to treat the case where $\H^G=\{0\}$. Denote by $\tilde{G}$ the csc, nilpotent Lie group in which $G$ embeds cocompactly; we now prove the statement by induction on $d:=\dim(\tilde{G})$.
 For $d=1$, the group $G$ is isomorphic to either $\RR$ or $\ZZ$, and in the latter case the statement follows directly from Lemma \ref{existence-of-subspace-seq}. If $G\simeq \RR$, consider the subgroup $Z$ corresponding to $\ZZ\leq \RR$ and split $\H$ as $\H^Z\oplus \H^{Z\perp}$. Since $G$ is abelian, this is a splitting of $\H$ as a unitary $G$-module and by Lemma \ref{existence-of-subspace-seq} we now get increasing, closed, $G$-invariant subspaces $\K_i\leq \H^{Z\perp} $ with dense union an vanishing cohomology. Put $\H_i:= \H^{Z} \oplus  \K_i $. Then we have $\Cohom^n(G,\H_i)=\Cohom^n(G,\H^Z)$, so our task is to prove that the later vanishes in all degrees. To this end, note that
\[
\Cohom^q\left(Z, \H^Z\right)=
\begin{cases} \H^Z &\mbox{if } q=0,1  \\
\{0\}& \mbox{otherwise}  \end{cases}  
\]
and, in particular, $\Cohom^q(Z, \H^Z)$ is Hausdorff for all $q\in \NN_0$.  The Hochschild-Serre spectral sequence therefore exists \cite[III,  n$^\text{o}$ 5.1]{Guichardet:Cohomologie} and has $E_2$-term
\[
E_2^{pq}= \Cohom^p(G/Z, \Cohom^q(Z,\H^Z))=
\begin{cases} \Cohom^p(G/Z,\H^Z) &\mbox{if } q=0,1  \\
\{0\}& \mbox{otherwise}  \end{cases} 
\]
However, since $G/Z\simeq S^1$ is compact, we have $\Cohom^p(G/Z,\H^Z) =\{0\}$ when $p> 0$ \cite[III, Corollaire 2.1]{Guichardet:Cohomologie} and in degree zero we have
\[
\Cohom^0\left(G/Z,\H^Z\right)= \left(\H^Z\right)^{G/Z}=\H^G=\{0\}.
\]
Thus, $E_2^{pq}=\{0\}$ for all $p,q\geq 0$ and hence $\Cohom^n(G,\H^Z)=\{0\}$, as claimed.\\
For the inductive step, let $G$ be  a Mal'cev group with $\dim(\tilde{G})=d$ and choose central subgroups  $Z\leq Z' \leq G$ such that $Z \simeq \ZZ$,  $K:=Z'/Z$ is compact and $G/Z'$ is again a Mal'cev group whose ambient csc nilpotent Lie group has dimension $d-1$; this is always possible since $G_{\text{cl}(G)}\simeq \ZZ^k\oplus \RR^l$ for some $k,l\in \NN_0$ so we have that $K$ is either trivial or $S^1$. Now decompose $\H=\H^Z\oplus \H^{Z\perp}$ and note that the decomposition respects the $G$-action since $Z$ is central. For the restricted action $G\curvearrowright \H^{Z\perp}$ we have, by construction, a central element such that the corresponding unitary acts without fixed points,  so by Lemma \ref{existence-of-subspace-seq} we get a sequence of closed $G$-equivariant subspaces $\K_i\subset \H^{Z\perp}$ with dense union and such that $\Cohom^n(G,\K_i)=\{0\}$ for all $n\in \NN_0$ and $i\in \NN$. Next split $\H^Z=(\H^Z)^K \oplus (\H^Z)^{K\perp}$, and since $K$ is central in $G/Z$ this decomposition respects the natural $G/Z$-action. On $(\H^Z)^K$ we get an induced action of $(G/Z)/K=G/Z'$ without non-trivial fixed points, so the induction takes over and  provides us with an increasing family of closed $G/Z'$-invariant subspaces $\L_i\leq (\H^Z)^K $ for which $\Cohom^n(G/Z', \L_i)=\{0\}$ for all $i\in \NN$ and $n\in \NN_0$. We now define
\[
\H_i:=\L_i\oplus \left(\H^Z\right)^{K\perp} \oplus \K_i\leq \left(\H^Z\right)^K\oplus \left(\H^Z\right)^{K\perp} \oplus \H^{Z\perp}=\H,
\]
and note that, as $\Cohom^n(G,\K_i)=\{0\}$, we have to show that $\Cohom^n(G,\L_i\oplus (\H^Z)^{K\perp})=\{0\}$ for all $i\in \NN$ and $n\in \NN_0$. As in the case $d=1$, this can be deduced by a spectral sequence argument: Since $\L_i\oplus (\H^Z)^{K\perp}\leq \H^Z$ we have
\[
\Cohom^q\left(Z, \L_i\oplus (\H^Z)^{K\perp}\right)=
\begin{cases}\L_i\oplus (\H^Z)^{K\perp} &\mbox{if } q=0,1  \\
\{0\}& \mbox{otherwise}  \end{cases}  
\]
so the Hochschild-Serre spectral sequence exists \cite[III,  n$^\text{o}$ 5.1]{Guichardet:Cohomologie} and has $E_2$-term
\[
E_2^{pq}= \Cohom^p\left(G/Z, \Cohom^q\big(Z,\L_i\oplus (\H^Z)^{K\perp}\big)\right)=
\begin{cases} \Cohom^p\left(G/Z,\L_i\oplus (\H^Z)^{K\perp}\right) &\mbox{if } q=0,1  \\
\{0\}& \mbox{otherwise}  \end{cases} 
\]
Since $K\leq G/Z$ is central and compact another application of the Hochschild-Serre spectral sequence (similar to the one carried out above in the case $d=1$)  
yields that
\[
\Cohom^p\left(G/Z,\L_i\oplus (\H^Z)^{K\perp}\right)\simeq \Cohom^p\left((G/Z)/K,\left(\L_i\oplus (\H^Z)^{K\perp} \right)^K\right)=\Cohom^p\left(G/Z',\L_i \right)=\{0\}.
\]
Thus $E_2^{pq}=\{0\}$ for all $p,q\geq 0$ and we conclude that $\Cohom^n(G,\H_i)=\{0\}$ for all $n\in \NN_0$ and $i\in \NN$, as desired.
\end{proof}

\begin{cor}\label{cor:Malcev_prop_H_T}
Mal'cev groups have property $H_T$.
\end{cor}
\begin{proof}
Let $G$ be a Mal'cev group and let $\H$ be a unitary Hilbert $G$-module without fixed points, and choose, according to Theorem \ref{thm:Delorme_prime}, an increasing sequence $\H_i\leq \H$ of closed, $G$-invariant subspaces with vanishing cohomology and dense union. Denote by $P_i$ the orthogonal projection onto $\H_i$; then the sequence $(P_i)_{i \in \NN}$ converges strongly to $\bbb$, and hence the convergence also holds uniformly (in the Hilbert space norm) on compact subsets of $\H$.
Fix an $n\in \NN$ and a continuous $n$-cocycle $c\colon G^n\to \H$. Since $\H_i$ is $G$-invariant, the projection $P_i$ commutes with the $G$-action, so  the map $c_i:=P_ic(-)\colon G^n \to \H_i$ is again a cocycle and hence inner by the defining properties of $\H_i$. Viewing $c_i$ as a sequence of cocycles with values in $\H$, 
we are therefore done if we can show that $(c_i)_i$ converges to $c$ in the standard topology on $\operatorname{Z}^n(G,\H)$ given by uniform convergence on compact subsets. For a compact set $K\subset G^n$, by continuity of $c$ the subset $c(K)\subset \H$ is also compact and thus
\[
\sup_{g\in K}\| c(g)- c_i(g)\|=\sup_{\xi \in c(K)}\|(\bbb-P_i)\xi\|\to 0. \qedhere
\]
\end{proof}
Observe that, in the proof just given,  we only used the fact that Mal'cev groups satisfy the conclusion of Theorem \ref{thm:Delorme_prime} to conclude that they have property $H_T$, and for the sake of generality it is convenient to promote this property to a definition:
\begin{defi}
A lcsc group  $G$ is said to have \emph{strong property $H_T$} if for any continuous unitary $G$-module $\H$
there exists an increasing sequence $\H_i$ of closed, $G$-invariant subspaces with dense union,  such that each of them contains $\H^G$  and such that the inclusion induces an isomorphism $\Cohom^n(G,\H^G)\simeq \Cohom^n(G,\H_i)$ for all $n\in \NN_0$.

%et $I\subseteq \NN$ be any subset and let $\mathscr{M}$ be a class of topological $G$-modules.
%Then $G$ is said to have \emph{strong property $H_T(I)$} with respect to  $\mathscr{M}$  if for any $\E\in \mathscr{M}$ there exists an increasing sequence of closed, $G$-invariant subspaces with dense union, for which $\E^G\subset \E_i $ for each $i\in \NN$, and such that the inclusion induces an isomorphism $\Cohom^n(G,\E^G)\simeq \Cohom^n(G,\E_i)$ for all $n\in I$. As with the ordinary property $H_T$, we will often suppress the reference to the class $\mathscr{M}$ when it is the class of unitary Hilbert $G$-modules, and when $I=\NN$ we will also often  suppress the reference to it, and simply say that $G$ has strong property $H_T$.
\end{defi}

The following corollary provides a very direct and useful extension of Theorem \ref{thm:Delorme}.

\begin{corollary} \label{cor:Delorme_prime}
Let $G$ be cohomologically finite dimensional lcsc group with strong property $H_T$. If $\H$ is a continuous, unitary Hilbert $G$-module 
with $\dim_{\mathbb{R}} \mathcal{H}^G < \infty$, and $\mathcal{F}$ is a continuous, finite dimensional $G$-module with $\mathcal{F} = \mathcal{F}^{G(d)}$ for some $d\in \mathbb{N}$,  then the natural inclusion map $\mathcal{H}^{G}\otimes \mathcal{F} \rightarrow \H\otimes \F$ induces an isomorphism
\begin{equation}
\xymatrix{ \Cohom^n(G,\mathcal{H}^G\otimes \mathcal{F}) \ar[r]^>>>>>{\cong} & \underline{\Cohom}^n(G,\H\otimes \F) }.\notag
\end{equation}
\end{corollary}

\begin{proof}
Indeed, denoting $(\mathcal{H}^G)^{\perp}$ by $\mathcal{K}$ we have the following decomposition (respecting the topology)
\begin{equation}
{\operatorname{{H}}}^n(G,\mathcal{H}\tens \mathcal{F}) = {\operatorname{{H}}}^n(G,\mathcal{K}\otimes \mathcal{F}) \oplus {\operatorname{{H}}}^n(G,\mathcal{H}^G\otimes \mathcal{F}).\notag
\end{equation}
and since $\dim(\H^G \tens \F)<\infty$,  ${\operatorname{{H}}}^n(G,\mathcal{H}^G\otimes \mathcal{F})$ is also finite dimensional and hence automatically reduced (\cite[III, Proposition 2.4]{Guichardet:Cohomologie}).
Thus, we have to show that ${\underline{\operatorname{{H}}}}^n(G,\mathcal{K}\otimes \mathcal{F})=0$. As $G$ has strong property $H_T$, %Applying Theorem \ref{thm:Delorme_prime}, 
we obtain an increasing sequence $(\K_i)$ of closed, $G$-invariant subspaces of $\mathcal{K}$ with dense union and vanishing cohomology, and we now prove, by induction on $d$, that $\Cohom^n(G,\mathcal{K}_i\otimes \mathcal{F}) = \{0\}$ for all $i$. Indeed, if $d=1$ then $\mathcal{F}=\mathcal{F}^G$, and the action therefore trivial, and since ${\operatorname{{H}}}^n(G,\mathcal{K}_i)=0$ and $\mathcal{K}_i\tens \mathcal{F}$ is, as a $G$-module, just a finite direct sum of copies of $\mathcal{K}_i$,  we also have ${\operatorname{{H}}}^n(G,\mathcal{K}_i \tens \mathcal{F})=\{0\}$. For the inductive step, consider the short exact sequence 
\[
0\to \mathcal{F}^{G(d-1)}\tens \mathcal{K}_i \to \mathcal{F}^{G(d)}\tens \mathcal{K}_i \to \mathcal{F}^{G(d)}/\mathcal{F}^{G(d-1)}\tens \mathcal{K}_i\to 0.
\]
The induction hypothesis  implies that ${\operatorname{{H}}}^n(G, \mathcal{F}^{G(d-1)}\tens \mathcal{K}_i )=\{0\}$, and the induced action on the quotient  $\mathcal{F}^{G(d)}/\mathcal{F}^{G(d-1)}$ is  easily seen to be trivial %(same argument as in the case of polynomials) 
so, as in the case $d=1$, we also get ${\operatorname{{H}}}^n(G, \mathcal{F}^{G(d)}/\mathcal{F}^{G(d-1)}\tens \K_i)=\{0\}$. Since $\F$ is assumed to be equal to $\F^{G(d)}$,  the long exact sequence in cohomology now shows that also ${\operatorname{{H}}}^n(G,\mathcal{F}\tens\K_i)=\{0\}$.
Finally, since $\mathcal{K}\otimes \mathcal{F}$ admits a continuous $G$-equivariant projection $P_{\mathcal{K}_i}\otimes \bbb$ onto $\mathcal{K}_i\otimes \mathcal{F}$ for all $i$,
and we have that $P_{\mathcal{K}_i}\tens \bbb$ converges strongly to $\bbb\tens \bbb$, we conclude, as in the proof of Corollary \ref{cor:Malcev_prop_H_T}, that $\underline{{\operatorname{{H}}}}^n(G, \mathcal{K} \tens \mathcal{F})=0$.
\end{proof}

\section{Polynomials on Mal'cev groups} 

Building on the general results in the previous sections, we can now give a complete description of the polynomials on a Mal'cev group. Let therefore $G$ be a Mal'cev group and let $(g_{i,j})$ be a Mal'cev basis of $G$. Then for each pair $(i_0,j_0)\in \Bultiindex{B}{\multirank(G)}$ we consider the map $\zeta_{g_{i_0,j_0}} \colon G\rightarrow \mathbb{R}$ given by
\begin{equation} \label{eq:Malcev_pol_def}
\zeta_{g_{i_0,j_0}} \colon  \prod_{(i,j)\in \Bultiindex{B}{\multirank(G)}} g_{i,j}^{t_{i,j}} \longmapsto t_{i_0,j_0},
\end{equation}
where $t_{i,j}$ ranges over the set $Z_{i,j}$, either equal to $\ZZ$ or $\RR$ (see Section \ref{sec:Malcev} for this and  Section \ref{sec:notation} for the definition of the multiindex notation). 
More generally, we will need the following notation: for any multi-index $\multiindex{d}\in \Multiindex{D}{d}{\multidim(G)}$ we define
\begin{equation}
\zeta_{\multiindex{d}} := \prod_{(i,j)\in \Bultiindex{B}{\multirank(G)}} \zeta_{g_{i,j}}(-)^{d_{i,j}} \colon \prod_{(i,j)\in \Bultiindex{B}{\multirank(G)}} g_{i,j}^{t_{i,j}} \longmapsto \prod_{(i,j)\in \Bultiindex{B}{\multirank(G)}} t_{i,j}^{d_{ij}}.\notag
\end{equation}
With this notation at our disposal, we can now give the promised description of polynomials on Mal'cev groups.
\begin{theorem} \label{thm:pol_Malcev_description}
Let $G$ be a %residually 
Mal'cev group and let $(g_{i,j})_{i,j}$ be a Mal'cev basis. Then for all $(i_0,j_0)\in  \Bultiindex{B}{\multirank(G)}$ the map $\zeta_{g_{i_0,j_0}}$ defined above is a polynomial map of degree $\deg \zeta_{g_{i_0,j_0}} = i_0$. Furthermore, the set $\{ \zeta_{\multiindex{d}} \mid \multiindex{d}\in \Multiindex{D}{d}{\multidim(G)} \}$ is a linear basis of $\operatorname{Pol}_d(G)$.
\end{theorem}

\begin{proof}

Assume first that $G$ is a csc nilpotent Lie group. We shall then show, by induction on $m:=\dim(G)$,  that $\deg(\zeta_{g_{ij}})=i$; the case of $m=1$ being trivial. 
For the inductive step, we need a bit of notation.  We first recall Leibman's definition of lc-polynomials 
from \cite{Leibman:Polynomial_mappings}: if $G$ is nilpotent with $\text{cl}(G)=c$ and $H$ is any group then $\varphi\colon H\to G$ is an \emph{lc-polynomial of lc-degree at most
$(1,2,\cdots, c)$}    if for all $i=1,\dots, c$ and $h_{1},\dots, h_{i+1}\in H$: $\rdiff_{h_1}\circ \cdots \cdot \circ \rdiff_{h_{i+1}}(\varphi) (H) \in G_{[i+1]}$,  where the differentiation operator is defined as $\rdiff_h(\varphi)(g):=\varphi(g)^{-1}\varphi(gh)$. In particular, since $G_{[c+1]}=\{\bbb\}$ this forces $\varphi$ to be a ``\hspace{0.01cm}polynomial of degree at most $c$\hspace{0.04cm}''; i.e.to satisfy $\rdiff_{h_1}\circ \cdots \circ \rdiff_{h_{c+1}}\xi=\bbb_G$.
The main virtue of the class of lc-polynomials is that they, by \cite[Proposition 3.4]{Leibman:Polynomial_mappings},  form a group under pointwise multiplication.
Now, if $(g_{ij})$ is  a Mal'cev basis and $\xi\colon G\to \RR$ is a polynomial of degree at most $i$, then the map $\varphi\colon G\to G$ given by $\varphi(h)=g_{ij}^{\xi(h)}$ is an lc-polynomial with lc-degree  at most $(1,\dots, c)$. To see this, note that
\begin{align}\label{diff-of-phi}
\rdiff_{g_1}\circ \cdots \circ \rdiff_{g_{l+1}} (\varphi)(h)=g_{ij}^{\rdiff_{g_1}\circ \cdots \circ \rdiff_{g_{l+1}}(\xi)(h)}
%\del_g(\varphi)(h)=g_{ij}^{\del_g\xi(h)},
\end{align}
and since $g_{ij}^{\RR}\leq G_{[i]}$ we have $\rdiff_{g_1}\circ \cdots \circ \rdiff_{g_{l+1}} (\varphi)(h)\in G_{[i]} \leq G_{[l+1]}$
when $l+1\leq  i$   and $\rdiff_{g_1}\circ \cdots \circ \rdiff_{g_{l+1}} (\varphi)(h)=\bbb \in G_{[l+1]}$ when $l+1>i\geq \deg(\xi)$. 
We are now ready to return to  the inductive step. Fix some $z:=g_{c,j_{0}}\in G_{[c]}\leq Z(G)$ and denote $G/z^{\RR}$ by $\bar{G}$ and  the quotient map $G\to \bar{G}$ by $\pi$.  Note that the $g_{ij}$'s with $(i,j)\neq (c,j_0)$ project onto a Mal'cev basis $\bar{g}_{ij}$ for $\bar{G}$ and hence $\zeta_{\bar{g}_{ij}}$ has degree $i$ by the induction hypothesis. But $\zeta_{g_{ij}}=\zeta_{\bar{g}_{ij}}\circ \pi$ so we also obtain $\deg(\zeta_{g_{ij}})=i$. Thus, we only have to prove that $\deg(\zeta_z)=c$. To this end, write
\begin{align}\label{eq:pol_Malcev_description_zeta_z}
z^{\zeta_z(h)} = h\cdot \left( \prod_{(i,j)\neq \operatorname{cl}(G),j_0} g_{i,j}^{\zeta_{g_{i,j}}(h)} \right)^{-1}.
\end{align}
As already mentioned, by  \cite[Proposition 3.4]{Leibman:Polynomial_mappings}  the set of lc-polynomials from $G$ to $G$ of degree at most (1,\dots, c) is a group under pointwise multiplication, so since the identity map is clearly such an lc-polynomial, if we can prove that each of the factors  $h\mapsto g_{i,j}^{\zeta_{g_{i,j}}(h)}$ in the product has lc-degree at most $(1,\dots, c)$ we obtain that also $\varphi\colon h\mapsto z^{\zeta_z(h)}$ has lc-degree at most $(1,\dots, c)$.
However,  as we just saw, when $(i,j)\neq (c,j_0)$, $\zeta_{g_{ij}}$ has degree $i$ and hence $h\mapsto g_{ij}^{\zeta_{g_{ij}}(h)}$ has lc-degree at most $(1,\dots, c)$ as desired , and from this it follows, using \eqref{diff-of-phi}, that $ \zeta_z\colon G\to \RR$ has degree at most $c$. We still need to prove that $\deg(\zeta_z)=c$, but if  $\deg(\zeta_z)\leq c-1$  then, by Corollary \ref{cor:leibman-cor}, the (unital) polynomial $\zeta_z$ must vanish on $G_{[c]}$,  which cannot be the case as $z\in G_{[c]}$ and $\zeta_z(z)=1$.\\

For the second part of the statement, denote again by $c=\text{cl}(G)$ the class  of $G$ and define for $d\geq 0$
\[
\A_d:=\spann_{\RR}\left\{\zeta_{\mathbf{d}}\mid \mathbf{d}\in \Multiindex{D}{d}{\multidim(G)}\right\}
\]
(recall that $\mathbf{d}=(d_{ij})\in \Multiindex{D}{d}{\multidim(G)}$ iff $\sum_{ij}id_{ij}\leq d$) and put $\A_{-\infty}=\{0\}$.  We need to prove that $\Pol_d(G)=\A_d$. Here the inclusion ``$\supseteq$'' follows from what was already proven and the general estimate $\deg(\xi\eta)\leq \deg(\xi)+\deg(\eta)$. To see the opposite inclusion,
we run an induction on $m=\dim(G)$, in which the  base case $m=1$ is trivial. For the inductive step, let $\xi \in \Pol_d(G)$  be given and fix $z:=g_{c,j_0}\in G_{[c]}\leq Z(G)$; we now run a finite subinduction on the minimal number $n\in \NN$ such that $\rdiff_z^{(n)}(\xi)=0$. If $n=1$, then  $\rdiff_z\xi=0$ 
and hence, by Lemma \ref{lem:poly-on-quotient}, induces a polynomial $\bar{\xi}$ on $\bar{G}:=G/z^{\RR}$ which, by the primary induction,  can be written as a linear combination of products of 
$(\zeta_{\bar{g}_{i,j}})_{(i,j)\neq (c,j_0)}$.  This means that $\xi\in \A_d$  as it can be written as a linear combination of the $\zeta_{\mathbf{d}}$'s even without using $\zeta_z$. For $n=2$, a direct computation shows that
\[
\rdiff_z\left( \rdiff_z(\xi)\zeta_z - \xi \right)=0.
\]
Thus, by the $n=1$ case just covered, this means that $\rdiff_z(\xi)\zeta_z - \xi \in \A_d$. Moreover, by Lemma \ref{lma:pol_diff_degree_wrt_lcs},  $\deg(\rdiff_z(\xi))\leq d\minusdot c $ and since $\rdiff_{z}(\rdiff_z(\xi))=0$, the $n=1$ case gives that $\rdiff_z\xi\in \A_{d\minusdot c}$. Hence $\rdiff_z(\xi)\zeta_z\in \A_d$ and thus also $\xi\in \A_d$. The general case  is a bit more involved, but overall builds on the same idea  used for $n=2$, and for that we need some more detailed information about differentiation and integration with respect to $z$, contained in the following three claims.
\setcounter{claim-counter}{0}%\paragraph{Claim 1:} 
\begin{claim}
We have $\rdiff_z(\zeta_z^k) \in \spann_{\NN} \{ \zeta_z^l\mid 0\leq l\leq k-1 \}$.
\end{claim}
\noindent We stress the fact that the span appearing in Claim 1 is over the naturals, so that, in particular, the leading coefficient is non-zero.

\begin{proof}[Proof of Claim 1]
For $k=1$, $\rdiff_z(\zeta_z)=1$ and for $k=2$ we have $\rdiff_z(\zeta_z^2)=1+2\zeta_z$. The general case follows inductively: assuming the result for $k-1$ we have, using the Leibniz rule \eqref{eq:leibniz_rule}, that
\[
\rdiff_z(\zeta_z^k)=\rdiff_z(\zeta_z^{k-1}\zeta_z)= \rdiff_z(\zeta_z^{k-1})\cdot1 + \rdiff_z(\zeta_z^{k-1})\cdot \zeta_z + \zeta_z^{k-1}\cdot 1
\]
By the induction hypothesis, $\rdiff_z(\zeta_z^{k-1})\in \spann_{\NN} \{ \zeta_z^l\mid 0\leq l\leq k-2 \}$ and thus  $\rdiff_z(\zeta_z^{k-1})\cdot \zeta_z\in \spann_{\NN} \{ \zeta_z^l\mid 0\leq l\leq k-1 \}$ and hence also $\rdiff_z(\zeta_z^k)\in \spann_{\NN} \{ \zeta_z^l\mid 0\leq l\leq k-1 \}$.
\end{proof}
%\paragraph{Claim 2:} 
\begin{claim}
For each $k\in \NN_0$ there exists  $\Upsilon_k\in \spann_{\RR} \{ \zeta_z^l\mid 0\leq l\leq k+1 \} $ such that $\rdiff_z\Upsilon_k=\zeta_z^k$. 
\end{claim}
\begin{proof}[Proof of Claim 2]
For $k=0$ this is clear as $\rdiff_z\zeta_z=1$ and for $k=1$ we have $\rdiff_z(\zeta_z^2)=1+2\zeta_z$ so $\Upsilon_1:=\frac{1}{2}\zeta_z^2 - \frac{1}{2}\zeta_z$ does the job.  The general case follows inductively: assume Claim 2 true for $k-1$. By Claim 1, we get $a_0,\dots, a_k\in \NN$ such that
\[
\rdiff_z(\zeta_z^{k+1})=a_0 + a_1\zeta_z +\cdots + a_{k}\zeta_z^k,
\]
and since $a_k\in \NN$ we have
\begin{align*}
\zeta_z^k&=\frac{1}{a_k}\left(\rdiff_z(\zeta_z^{k+1})- \sum_{i=0}^{k-1}a_{i}\zeta_z^{i} \right)
= \frac{1}{a_k}\left(\rdiff_z(\zeta_z^{k+1})- \sum_{i=0}^{k-1}a_{i}\rdiff_z\Upsilon_i \right)\\
&= \rdiff_z \underbrace{\left(\frac{1}{a_k}\zeta_z^{k+1}- \frac{1}{a_k}\sum_{i=0}^{k-1}a_{i}\Upsilon_i \right)}_{=:\Upsilon_k}.
\end{align*}
\end{proof}

%\paragraph{Claim 3:} 
\begin{claim}
For every $\xi \in \A_{d\minusdot c}$ there exists $\Xi\in \A_{d}$ such that $\rdiff_z \Xi=\xi$.
\end{claim}
\begin{proof}[Proof of Claim 3] For $\xi \in \A_{d\minusdot c}$, by grouping summands together according to their power of $\zeta_z$ we can write it as
\[
\xi=\sum_{k=0}^m r_k \eta_k\zeta_z^k
\]
(for some $m\in \NN$ and $r_k\in \RR$) where $\eta_k\in \A_{d\minusdot c\minusdot kc}$  and $\rdiff_z(\eta_k)=0$.  Putting $\Xi:= \sum_{k=0}^m r_k \eta_k\Upsilon_k$, where the $\Upsilon_k$'s are as in Claim 2, we get, using the Leibniz rule \eqref{eq:leibniz_rule}, that $\rdiff_z(\Xi)=\xi$. Moreover, by Claim 2 we have $\Upsilon_k\in \spann_{\RR}\{\zeta_z^l \mid 0\leq l\leq k+1\} $ and since $\eta_k\in \A_{d\minusdot c\minusdot kc}$
%and $\deg(\zeta_z^l)=cl$ 
we conclude that $\Xi\in \A_d$ as desired.
\end{proof}
We can now finish the (sub-)induction argument, which is running over the minimal $n$ such that $\rdiff^{(n)}_z(\xi)=0$. Given  $\xi \in \Pol_{d}(G)$, $\rdiff_z\xi$ falls under the induction hypothesis and has degree at most $d\minusdot c$ by Lemma \ref{lma:pol_diff_degree_wrt_lcs}. Hence $\rdiff_z(\xi)\in \A_{d\minusdot c}$, so by Claim 3 there exists $\Xi\in \A_d$ such that $\rdiff_z\Xi=\rdiff_z(\xi)$. Thus,
$
\rdiff_z(\Xi-\xi)=0,
$
and by the base case ($n=1$) this means that $\Xi-\xi\in \A_d$. By construction, $\Xi\in \A_d$ and hence also $\xi\in \A_d$.   This finishes the proof that that $\Pol_d(G)=\A_d$,  and the linear independence of the polynomials $\{\zeta_{\multiindex{d}} \mid  \multiindex{d}\in \Multiindex{D}{d}{\multidim(G)}  \}$ is clear, since they pull back to linearly independent polynomials on  $\mathbb{R}^{\dim \mathfrak{g}}$ via the Mal'cev coordinates. This completes the proof in the case where $G$ is a csc nilpotent Lie group. \\

In the general case, we know that $G$ is cocompactly embedded in its Mal'cev completion $L$, and that we may choose a Mal'cev basis for $L$ based in $G$. Denote the dimension of $L$ by $n$. Then the Mal'cev coordinates gives a diffeomorphism
$ L \simeq \RR^n$ which identifies $G$ with a (cocompact) subset of the form $\ZZ^{m}\times \RR^{m'}$ where $m+m'=n$. Moreover, by what was just proven we know that polynomials on $L$ pull back to polynomials on $\RR^n$ via the Mal'cev coordinates, and polynomials on $\RR^n$ are uniquely determined on the subset $\ZZ^{m}\times \RR^{m'}$. Thus, polynomials on $L$ are uniquely determined by their values on $G$, so the restriction map $\text{res}_d\colon \Pol_d(L) \to \Pol_d(G)$ is injective for all $d\in \NN_0$. 
%note that res_d cannot increase the degree so that it indeed takes values in Pol_d(G)
We now need to prove that it is also surjective. We first note that this is trivially the case when $d=0$, and we now proceed by induction on $d$. Assuming this to be true up to $d-1$ we have\footnote{The tensor product with $\CC$ is included in order to formally conform with framework in Section \ref{sec:Malcev}; see remarks preceding Definition \ref{def:H_T}.}
\begin{align*}
 \CC\tens_{\RR}\left(\Pol_d(G)/\Pol_{d-1}(G)\right) &\simeq   \CC\tens_{\RR}\Cohom^1(G,\Pol_{d-1}(G)) \tag{Prop.~\ref{prop:PolCohom_linear_description}}\\
 &\simeq  \Cohom^1(G,\CC\tens_{\RR}\Pol_{d-1}(G))\\
 &\simeq \Cohom^1(G, \CC\tens_{\RR} \Pol_{d-1}(L)) \\
 &\simeq \Cohom^1\left(L, \text{Ind}_G^L\left(\CC\tens_{\RR}  \Pol_{d-1}(L)\right)\right) \tag{\cite[III, Prop.~4.6]{Guichardet:Cohomologie}}\\
 &\simeq \Cohom^1\left(L, \operatorname{L}^2(L/G)\tens_{\CC}\CC\tens_{\RR}  \Pol_{d-1}(L)\right) \tag{\cite[Cor. E.2.6 (i)]{BHV}}\\
 &\simeq \Cohom^1(L, \CC\tens_{\RR}\Pol_{d-1}(L)) \tag{Cor.~\ref{cor:Delorme_prime}}\\
 &\simeq \CC\tens_{\RR}\left(\Pol_d(L)/\Pol_{d-1}(L)\right). \tag{Prop.~\ref{prop:PolCohom_linear_description}}
\end{align*}
Note that we may indeed apply Corollary \ref{cor:Delorme_prime} to obtain the penultimate equality, because $L^2(L/G)^L=\CC.1_{L/G}$ and $ \Cohom^1(G,\Pol_{d-1}(G))$, and hence also $ \Cohom^1(L, \operatorname{L}^2(L/G)\tens_{\CC}\CC\tens_{\RR}  \Pol_{d-1}(L))$, is finite dimensional (by Proposition \ref{prop:malcev_cohom_fd} and Corollary \ref{cor:cohom_fin_dim_implies_pol_fin_dim}) which implies that the latter is  automatically Hausdorff \cite[III, Proposition 2.4]{Guichardet:Cohomologie}. From this we conclude that $\Pol_d(G)$ and $\Pol_d(L)$ have the same (finite) linear dimension and hence the restriction map $\text{res}_d\colon \Pol_d(L) \to \Pol_d(G)$ must be surjective as well.  The only thing left to prove is that $\deg(\zeta_{g_{ij}})=i$ when $\zeta_{g_{ij}}$ is considered as a polynomial on $G$.  
However, as we saw above the restriction map $\Pol_d(L)\to \Pol_d(G)$ is a linear isomorphism for each $d\in \ZZ_*$ and this forces $\deg \circ  \text{ res}_d(\xi) =\deg(\xi)$ and we proved above that when considered a polynomial on $L$ the degree of $\zeta_{g_{ij}}$  is indeed $i$.
%if for some \xi of degree d the degree of res(\xi) is d-1 or lower then \res(\xi) is also the image of a polynomial in Pol_{d-1}(L) and hence res would not be injective on Pol_d.
\end{proof}

\begin{rem}\label{rem:exp-description}
Theorem \ref{thm:pol_Malcev_description} describes the polynomials on a csc nilpotent Lie group in terms of a  Mal'cev basis, but by \cite[Proposition 1.2.7]{Corwin-Greenleaf} these may  equivalently be described  as those maps that pull back to classical polynomials on the associated Lie algebra via the exponential map.
\end{rem}

\begin{cor}\label{cor:degree-of-product}
For a Mal'cev group $G$ and $\xi,\eta\in \Pol(G)$ we have $\deg(\xi\cdot\eta)=\deg(\xi) \plusdot \deg(\eta)$.  In particular, $\deg(\zeta_{\multiindex{d}})=\sum_{i,j}id_{ij}$.
\end{cor}
\begin{proof}
As we saw above (cf.~\eqref{eq:product_rdiff} and the remarks preceding it), the inequality `$\leq$' is true for any group $G$ so we only need to prove the opposite. 
Upon picking a Mal'cev basis for $G$, by Theorem \ref{thm:pol_Malcev_description} we therefore have $\deg(\zeta_{\multiindex{d}})\leq \sum_{i,j}id_{ij}=:d$ for any multi-index $\multiindex{d}$. However, if $\deg(\zeta_{\multiindex{d}})<d$ then  $\zeta_{\multiindex{d}}\in \A_{d-1}$  by Theorem \ref{thm:pol_Malcev_description} which contradicts the linear independence of the basis also provided by Theorem \ref{thm:pol_Malcev_description}. \\
% Moroever, by pulling $\zeta_{\multiindex{d}}$ back to $\RR^n$ via the Mal'cev coordinates we obtain a classical polynomial on $\RR^n$ which therfore cannot coincide wi
For the general claim about products, put $d:=\deg(\xi)$ and $d':=\deg(\eta)$  and note that the statement is trivial if either number is $-\infty$, so we may assume that this is not the case. Write, according to Theorem \ref{thm:pol_Malcev_description}, the polynomials as $\xi=\sum_{\multiindex{d}\in \Multiindex{D}{d}{\multidim(G)}} r_{\multiindex{d}}\zeta_{\multiindex{d}}$ and $\eta=\sum_{\multiindex{c}\in \Multiindex{D}{d'}{\multidim(G)}} s_{\multiindex{c}}\zeta_{\multiindex{c}}$. 
Due to the linear independence of the $\zeta_{\multiindex{d}}$'s, the only way that  we can have $\deg(\xi\cdot\eta)<d+d'$ is if
\[
\Big(\sum_{\multiindex{d}\in \Multiindex{D}{d}{\multidim(G)}^{=}} r_{\multiindex{d}}\zeta_{\multiindex{d}} \Big) \Big(\sum_{\multiindex{c}\in \Multiindex{D}{d'}{\multidim(G)}^{=}}s_{\multiindex{c}}\zeta_{\multiindex{c}} \Big)=
% \sum_{\multiindex{d}\in \Multiindex{D}{d}{\multidim(G)}^{=}} \sum_{\multiindex{c}\in \Multiindex{D}{d'}{\multidim(G)}^{=}} r_{\multiindex{d}}s_{\multiindex{c}}\zeta_{\multiindex{d}}\zeta_{\multiindex{c}}=
\sum_{\multiindex{e}\in \Multiindex{D}{d+d'}{\multidim(G)}}\Big( \sum_{ \substack{ \multiindex{d}\in \Multiindex{D}{d}{\multidim(G)}^{=}  \\   \multiindex{c}\in \Multiindex{D}{d'}{\multidim(G)}^{=} \\ \multiindex{d} +\multiindex{c}=\multiindex{e}  }} r_{\multiindex{d}}s_{\multiindex{c}} \Big)\zeta_{\multiindex{e}}
\]
has degree less than $d+d'$, and by what was already shown $\deg(\zeta_{\multiindex{c} + \multiindex{d}})=d+d'$ for all $\multiindex{d}\in \Multiindex{D}{d}{\multidim(G)}^{=}$ and $\multiindex{c}\in \Multiindex{D}{d'}{\multidim(G)}^{=}$. This therefore forces  the product on the left hand side to be zero, and pulling the polynomials back to $\RR^{\dim(\mathfrak{g})}$ via the Mal'cev coordinates we obtain classical polynomials in $\dim(G)$ variables, and since these constitute a domain one of the two factors needs to be zero, thus contradicting the the fact that $\deg(\xi)=d$ and $\deg(\eta)=d'$.
\end{proof}

\color{black}

\begin{cor} \label{lma:pol_diff_degree}
Let $G$ be a %(residually) 
Mal'cev group and let $\xi\in \operatorname{Pol}(G)$. Then for every $g\in G$ we have $\deg \ldiff_g \xi \leq \deg \xi \minusdot \deg g$ and analogously for $\rdiff_g \xi$, where $\deg(g)$ is the degree with respect to the Mal'cev central series.
\end{cor}

\begin{proof}
The Mal'cev central series is  given as $G_i=G\cap (G\tens \RR)_{[i]}$ and hence $\deg(g)$ is the same whether we compute it with respect to the Mal'cev central series in $G$ or the lower central series in $G\tens \RR$. Furthermore, by  Theorem \ref{thm:pol_Malcev_description} we know that the restriction map $\Pol_d(G\tens \RR) \to \Pol_d(G)$ is bijective and degree preserving and the result therefore follows from Corollary \ref{lma:pol_diff_degree_wrt_lcs}.
\end{proof}

We will need also the following uniqueness results for polynomials.

\begin{lemma} \label{lma:pol_determined_subgroup}
Let $G$ be a %(residually) 
Mal'cev group and let $(g_{i,j})_{i,j}$ be a Mal'cev basis for $G$. Denote $n := \operatorname{rk} G$ and let $G_0$ be the (not necessarily closed) subgroup of $G$ generated (algebraically) by $S:= \{g_{1,1},\dots ,g_{1,n}\}$. Finally, denote by $S^{\leq d}$ the set of words  on $S$ of length at most $d\in \mathbb{N}$. Then for  $d\in \mathbb{N}$ and  $\xi,\eta \in \operatorname{Pol}_d(G)$ we have
\begin{equation}
\xi = \eta \Leftrightarrow \xi_{\vert S^{\leq d}} = \eta_{\vert S^{\leq d}}.\notag
\end{equation}
In particular, any polynomial map  on $G$ is uniquely  determined by its values on $G_0$.
\end{lemma}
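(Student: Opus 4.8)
The forward implication is immediate, so I would set $\phi := \xi - \eta \in \operatorname{Pol}_d(G)$ and aim to show that $\phi|_{S^{\leq d}} = 0$ already forces $\phi \equiv 0$; this simultaneously yields the ``in particular'' clause. The plan is to induct on $d$, the case $d = 0$ (constants, with $S^{\leq 0} = \{\bbb\}$) being trivial.

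Step 1 (reduce the degree by finite differences). For each $s \in S \cup S^{-1}$, Proposition \ref{prop:pol_diff_is_pol} gives $\rdiff_s \phi \in \operatorname{Pol}_{d-1}(G)$. Since $(\rdiff_s\phi)(w) = \phi(ws) - \phi(w)$ and both $w$ and $ws$ lie in $S^{\leq d}$ whenever $w \in S^{\leq d-1}$, the map $\rdiff_s\phi$ vanishes on $S^{\leq d-1}$. By the inductive hypothesis, $\rdiff_s \phi \equiv 0$ on all of $G$, i.e.\ $\phi(hs) = \phi(h)$ for every $h \in G$ and every generator $s$. As $G_0 = \langle S\rangle$, this makes $\phi$ right-$G_0$-invariant, and together with $\phi(\bbb)=0$ it gives in particular $\phi|_{G_0} = 0$. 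It therefore suffices to prove the (equivalent, sharper-looking) statement that a continuous polynomial on $G$ vanishing on $G_0$ vanishes identically.

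Step 2 (Zariski density of $G_0$). First I would reduce to the case where $G = L$ is a csc nilpotent Lie group: by the argument in the proof of Lemma \ref{lma:pol_Malcev_description}, every $\phi \in \operatorname{Pol}_d(G)$ factors through the Mal'cev group $G/G_{d+1}$ and, via the restriction isomorphism $\operatorname{Pol}_d(G\otimes\mathbb{R}) \cong \operatorname{Pol}_d(G)$, comes from the completion $G\otimes\mathbb{R}$; under both steps the image of $G_0$ remains the subgroup generated by the first-layer Mal'cev generators. By Lemma \ref{lma:pol_Malcev_description}, the Mal'cev coordinates identify $\operatorname{Pol}(L)$ with the genuine polynomials in the coordinate functions $\zeta_{g_{i,j}}$, so the task becomes showing that the coordinate image of $G_0$ is Zariski dense. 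I would prove this by a second induction on $\dim \mathfrak{g}$. Let $Z := L_{[\operatorname{cl}(L)]}$ be the (central) top layer; since $\log g_{1,j} = X_{1,j}$ generate $\mathfrak{g}$ as a Lie algebra, the iterated length-$\operatorname{cl}(L)$ group commutators of the $g_{1,j}$ lie in $Z$ and their logarithms span $\mathfrak{g}_{[\operatorname{cl}(L)]}$; hence $G_0 \cap Z$ spans $Z \cong \mathbb{R}^m$ and so contains a full-rank lattice, making it Zariski dense in $Z$. Right multiplication by $Z$ is translation in the top coordinates, so for fixed $g \in G_0$ the map $w \mapsto \phi(gw)$ is a genuine polynomial on $Z$ vanishing on $G_0 \cap Z$, whence $\phi|_{G_0 Z} = 0$. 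Writing $\phi = \sum_\alpha c_\alpha\,(t_Z)^\alpha$ with $t_Z$ the top-layer coordinates and $c_\alpha \in \operatorname{Pol}(L/Z)$, the vanishing on $G_0 Z$ forces each $c_\alpha$ to vanish on the image of $G_0$ in $L/Z$, which is exactly the first-layer-generated subgroup of the lower-dimensional group $L/Z$; by induction $c_\alpha \equiv 0$, and hence $\phi \equiv 0$.

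The routine half is Step 1. The main obstacle is Step 2: making the density statement rigorous, in particular verifying that the iterated commutators of the first-layer generators exhaust each central layer (so that $G_0 \cap Z$ is genuinely large), and keeping the coordinate and quotient bookkeeping consistent with the reduction to the csc nilpotent Lie case.
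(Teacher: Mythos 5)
Your proof is correct and follows essentially the same route as the paper's: first reduce, via a finite-difference induction on $d$ (this is exactly the argument of \cite[Proposition 1.15]{Leibman:Polynomial_mappings}, which the paper simply cites), to the statement that a polynomial vanishing on $G_0$ vanishes identically, and then prove that statement by induction on $\dim_{\mathbb{R}}\mathfrak{g}$ along central directions lying in $G_0$. The only difference is one of packaging: the paper strips a single central element $z\in G_0\cap Z(G)$ at a time, using the decomposition $\xi=\sum_i \eta_i\cdot\zeta_z^i$ furnished by Lemma \ref{lma:pol_Malcev_description}, whereas you pass to the Mal'cev completion and quotient by the whole top layer $Z=L_{[\operatorname{cl}(L)]}$ at once; your explicit commutator/lattice/Zariski-density argument is precisely the justification, left implicit in the paper, that $G_0\cap Z(G)$ is nontrivial (indeed of full rank in the top central layer).
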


\begin{proof}
By Theorem \ref{thm:pol_Malcev_description}, the polynomials on $G$ and on its Mal'cev completion are the same, so by passing to the Mal'cev completion we may assume that $G$ is a csc, nilpotent Lie group.  As in \cite[Proposition 1.15]{Leibman:Polynomial_mappings}, we see that if $\xi_{\vert S^{\leq d}} = \eta_{\vert S^{\leq d}}$ then $\xi_{\vert G_0} = \eta_{\vert G_0}$, so the lemma will follow if we show that any polynomial vanishes on $G$ if it vanishes on $G_0$. Denote the closure of $G_0$ by $H$ and note that the quotient map $\pi\colon G\to G/G_{[2]}$ maps $H$ onto a cocompact subgroup (e.g. since $\pi(g_{1,1}),\dots, \pi({g_{1,n}})$ is a Mal'cev basis for the (abelian) quotient $G/G_{[2]}$ and all products of the form $\pi(g_{1,1})^{m_1}\cdots \pi({g_{1,n}})^{m_n}$ with $m_i\in \ZZ$ are contained in $\pi(H)).$
%I actually think $H$ surjects onto $G/G_{[2]}$ . 
By \cite[Theorem 5.4.13]{Corwin-Greenleaf} (and the generalizing remarks following it in section 5.5), this implies that $H$ is cocompact in $G$.  In other words, $H$ is a Mal'cev group with Mal'cev completion $G$ and we therefore know that polynomials are uniquely determined by their values on $H$ and, by continuity, on its dense subgroup $G_0$.
\end{proof}

\begin{lemma} \label{lma:pol_determined_diff}
Let $G$ be a %(residually)
 Mal'cev group, $(g_{i,j})_{i,j}$ a Mal'cev basis and let $\xi,\eta\in \operatorname{Pol}(G)$. If $\xi(\bbb) = \eta(\bbb)$ and $\rdiff_{g_{1,j}} \xi = \rdiff_{g_{1,j}} \eta$ for all $j=1,\dots ,\operatorname{rk}(G)$ then $\xi=\eta$.
\end{lemma}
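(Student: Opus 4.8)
The plan is to reduce immediately to a vanishing statement and then invoke the preceding lemma. First I would set $\phi := \xi - \eta$. Since each $\operatorname{Pol}_d(G) = C(G,\mathbb{R})^{G(d+1)}$ is a linear subspace of $C(G,\mathbb{R})$ and these spaces are nested, $\operatorname{Pol}(G)$ is itself a vector space; hence $\phi \in \operatorname{Pol}(G)$. The two hypotheses translate into $\phi(\bbb) = 0$ and $\rdiff_{g_{1,j}}\phi = 0$ for all $j = 1,\dots,\operatorname{rk}(G)$, and the goal becomes to show $\phi \equiv 0$.

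Next I would unwind the meaning of $\rdiff_{g_{1,j}}\phi = 0$. By the definition $(\rdiff_g\phi)(h) = \phi(hg) - \phi(h)$, this says precisely that $\phi(h g_{1,j}) = \phi(h)$ for all $h \in G$; substituting $h \mapsto h g_{1,j}^{-1}$ gives invariance under $g_{1,j}^{-1}$ as well. A straightforward induction on word length then shows that $\phi(hw) = \phi(h)$ for every $h \in G$ and every $w$ in the (abstract) subgroup $G_0$ generated by $S = \{g_{1,1},\dots,g_{1,\operatorname{rk}(G)}\}$ -- that is, $\phi$ is right-$G_0$-invariant. Taking $h = \bbb$ and using $\phi(\bbb) = 0$, I conclude that $\phi$ vanishes identically on $G_0$.

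Finally, the subgroup $G_0$ appearing here is exactly the one featured in Lemma \ref{lma:pol_determined_subgroup} (it is generated by the same set $S$ of first-layer Mal'cev basis elements, and $\operatorname{rk}(G)$ is finite so $S$ is finite), whose conclusion is that any polynomial map in $\operatorname{Pol}(G)$ is determined by its restriction to $G_0$; equivalently, a polynomial vanishing on $G_0$ vanishes everywhere. Applying this to $\phi$ yields $\phi \equiv 0$, i.e. $\xi = \eta$. I expect essentially no obstacle in this argument: all of the substantive work -- propagating the vanishing from the abstract subgroup $G_0$ to the whole (topological) group -- has already been carried out in Lemma \ref{lma:pol_determined_subgroup}, so the present lemma amounts to the observation that agreement of $\xi,\eta$ at $\bbb$ together with agreement of their first-layer right-differences forces $\xi - \eta$ to be right-$G_0$-invariant and hence to vanish on $G_0$.
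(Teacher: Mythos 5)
Your proof is correct, and it is in fact precisely the first argument the paper itself indicates: immediately before its displayed proof, the paper remarks that the lemma ``follows from the previous lemma, by showing that $\xi(g)=\eta(g)$ for all $g\in G_0$, by induction on word-length,'' which is exactly your reduction --- $\phi:=\xi-\eta$ lies in $\operatorname{Pol}(G)$, is right-invariant under each $g_{1,j}^{\pm 1}$ and hence under all of $G_0$, so vanishes on $G_0$ because $\phi(\bbb)=0$, and Lemma \ref{lma:pol_determined_subgroup} then forces $\phi\equiv 0$. The proof the paper actually writes out is a stated ``alternative argument'' of a slightly different shape: it observes that $g\mapsto \rdiff_g f$ satisfies the $1$-cocycle identity for the right-regular representation, deduces $\rdiff_g(\xi-\eta)=0$ for all $g\in G_0$, and then applies Lemma \ref{lma:pol_determined_subgroup} not to $\xi-\eta$ itself but to the functions $g\mapsto \left(\rdiff_g(\xi-\eta)\right)(h)$ for fixed $h$, which are continuous polynomial maps in $g$ by Proposition \ref{prop:pol_diff_is_pol}; this yields $\rdiff_g(\xi-\eta)\equiv 0$ for \emph{all} $g\in G$, whence $\xi-\eta$ is constant and equals $(\xi-\eta)(\bbb)=0$. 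Both routes hinge on the same key lemma; yours is marginally more elementary in that it bypasses Proposition \ref{prop:pol_diff_is_pol} entirely, while the paper's version produces along the way the stronger intermediate fact that every right-difference of $\xi-\eta$ vanishes, not only those by elements of $G_0$.
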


The statement may be deduced from the previous lemma, by showing that $\xi(g)=\eta(g)$ for all $g\in G_0$, by induction on word-length. Here is an alternative argument:

\begin{proof} For any $f\in C(G,\mathbb{R})$, the function $g\mapsto \rdiff_g f$ satisfies the $1$-cocycle identity, when $C(G,\RR)$ is considered a $G$-module for the right regular action. Hence we conclude that $\rdiff_g (\xi-\eta) = 0$ for all $g\in G_0$, the subgroup of $G$ generated by  $(g_{1,j})_j$. 
By Proposition \ref{prop:pol_diff_is_pol},  the map $g\mapsto \rdiff_g (\xi-\eta)(\bbb)$ is itself a  polynomial map on $G$, and since it vanishes on $G_0$ it vanishes on all of $G$ by the previous lemma. Thus, $\xi(g)=\eta(g)$ as desired.

\end{proof}

%% new section %%

\subsection{The Hopf  algebra of polynomial maps} \label{sec:algebra_pol}
The space of polynomial maps $\operatorname{Pol}_d(G)$ may be seen as containing certain ``$d$'th order dual structure''. For instance, $\operatorname{Pol}_1(G)$, being essentially (that is, up to addition of some constant) the space of continuous group homomorphisms into $\mathbb{R}$, contains very precise information about the (torsion-free part of the) abelianization of $G$. 
In this section we elaborate on these considerations and Theorem \ref{thm:hom_existence} below makes precise in which way $\Pol(G)$ should be considered a dual object. 
\begin{rem}\label{rm:alg-geom-rem}
If $G$ is a csc nilpotent Lie group, when thinking of $G$ as the set of real points on an algebraic group, it follows from Theorem \ref{thm:pol_Malcev_description} that $\Pol(G)$ is the set of \emph{regular functions} on $G$, in the sense of algebraic geometry \cite{Borel-GTM}.
Many of the results deduced in this section therefore also follow from well-known results in algebraic geometry (e.g.~the fact that $\Pol(G)$ is a Hopf algebra \cite[Chapter 1]{Borel-GTM}), but for the sake of completeness, and since we wish to keep track of the degree of polynomials, which is not covered by algebraic geometry, we include the details below. 
\end{rem}

\begin{lemma}\label{lem: pol-of-product-grps}
Let $G$ and $H$ be  Mal'cev groups. The map $\alpha\colon C(G) \tens C(H) \to C(G\times H)$ given by $\alpha(\xi\tens \eta)(g,h):=\xi(g)\eta(h)$ restricts to an algebra-isomorphism $\Pol(G)\tens \Pol(H) \simeq \Pol(G\times H)$ which respects the grading given by the polynomial degree;  that is $\deg \alpha(\xi\otimes \eta) = \deg \xi + \deg \eta$.
\end{lemma}
Here, and in what follows, the symbol ``$\otimes$'' is used for the algebraic tensor product of real vector spaces.

\begin{proof}
If $(g_{ij})$ is a Mal'cev basis for $G$ and $(h_{kl})$ is one for $H$, then the set $((g_{ij}, \bbb), (\bbb, h_{kl}))_{i,j,k,l}$ is a Mal'cev basis for $G\times H$ and a direct computation verifies that $\alpha(\zeta_{g_{ij}}\tens 1)=\zeta_{(g_{ij}, \bbb)}$ and $\alpha(1\tens \zeta_{h_{kl}})=\zeta_{(\bbb,h_{kl})}$. From this it follows that $\alpha$, which is easily seen to be an algebra homomorphism, maps 
$\Pol(G) \tens \Pol(H)$  to $\Pol(G\times H)$ and as $\Pol(G\times H)$ is generated, as an algebra, 
by $(\zeta_{(g_{ij},\bbb)}, \zeta_{(\bbb,h_{kl})})_{ijkl}$ (Theorem \ref{thm:pol_Malcev_description}) the restriction of $\alpha$ is surjective.
Furthermore, by Theorem \ref{thm:pol_Malcev_description} the elements $\zeta_{\multiindex{d}}\tens \zeta_{\multiindex{c}}$ with $\multiindex{d}\in \Multiindex{D}{d}{\multidim(G)}$ and $\multiindex{c}\in \Multiindex{D}{d'}{\multidim(H)}$ constitute a basis for $\Pol_{d}(G)\tens \Pol_{d'}(H)$, and since $\alpha(\zeta_{\multiindex{d}}\tens \zeta_{\multiindex{c}})\in \{\zeta_{\multiindex{b}} \mid \Multiindex{D}{d+d'}{\multidim(G\times H)} \}$ 
%(which is linearly indenpendent by Proposition \ref{prop:pol_Malcev_description}) 
it follows that $\alpha$ is injective on $\Pol_{d}(G)\tens \Pol_{d'}(G)$ for any $d,d'\in \NN$, and hence globally. That $\alpha $ is degree preserving can be seen by the same argument used to prove Corollary \ref{cor:degree-of-product}.
\end{proof}

The previous lemma, in particular, shows that, given Mal'cev groups $G$ and $H$, any linear map $\Psi \colon \operatorname{Pol}(G) \rightarrow \operatorname{Pol}(H)$ satisfying that $\deg \Psi(\xi) \leq \deg \xi$ for all $\xi\in \operatorname{Pol}(G)$, induces a map $\Psi\otimes \Psi \colon \operatorname{Pol}(G\times G) \rightarrow \operatorname{Pol}(H\times H)$ given by $\Psi(\xi\otimes \eta) = \Psi(\xi)\otimes \Psi(\eta)$  such that $\deg (\Psi\otimes \Psi)(\zeta) \leq \deg \zeta$ for all $\zeta \in \Pol(G\times G)$.

\begin{definition}[degree-preserving maps]
Let $G$ and $H$ be lcsc groups. We will say that a linear map $\Psi\colon \operatorname{Pol}(G) \rightarrow \operatorname{Pol}(H)$ is \emph{degree-preserving} if $\deg \Psi(\xi) \leq \deg \xi$ for all $\xi \in \operatorname{Pol}(G)$, and \emph{properly degree-preserving} if equality holds.
\end{definition}

\begin{definition}[strongly unital maps]
We say that a linear map $\Psi\colon \operatorname{Pol}(G) \rightarrow \operatorname{Pol}(H)$ is \emph{strongly unital} if it is unital and if $\Psi(\xi)(\bbb) = \xi(\bbb)$ for all $\xi\in \Pol(G)$.
\end{definition}

\begin{prop}\label{prop:mult-preserves-poly}
Let $G$ be a Mal'cev group with multiplication $m\colon G\times G\to G$ and let $\xi\in \Pol(G)$. Then $m^*(\xi):=\xi\circ m\in \Pol(G\times G)$ and $\deg(m^*(\xi))=\deg(\xi)$. That is, $m^*\colon \Pol(G) \to \Pol(G\times G)$ is properly degree preserving.

\end{prop}

\begin{proof}
That $m^*\xi$ is a polynomial for every polynomial  map $\xi$ follows from \cite{Leibman:Polynomial_mappings}: Indeed, we claim that multiplication $m\colon G\times G \rightarrow G$ is a polynomial map of lc-degree (cf.~\cite[Section 3]{Leibman:Polynomial_mappings}) $\operatorname{lc-deg} m = (1,\dots ,\operatorname{cl}(G))$. To see this, let $\pi_i\colon G\times G \rightarrow G, i=1,2$, denote the projections on the first and second factor,  respectively. Then $m(g) = \pi_1(g)\cdot \pi_2(g)$ is a pointwise product of homomorphisms, so the claim follows by \cite[Theorem 3.2]{Leibman:Polynomial_mappings}.  Next we now show that $m^*$ is properly degree preserving.
To this end, we first show that if $(\xi_i)_{i=1}^\infty$ is basis for $\Pol(G)$, chosen such that $\xi_0=1$, $\xi_i(\bbb)=0$ for $i\geq 1$ and  $\{\xi_i \mid \deg(\xi_i)\leq d\}$ is a basis for $\Pol_d(G)$ for every $d\in \NN_0$, and $m^*(\xi)$ is written as 
\[
m^*(\xi)=\sum_{i=0}^m \xi_i\tens \eta_i,
\]
with $\eta_i\in \Pol(G)$  then $\deg(\xi_i)\dotplus\deg(\eta_i)\leq \deg(\xi)$, from which we obtain $\deg(m^*(\xi))\leq \deg(\xi)$ by Lemma \ref{lem: pol-of-product-grps}.
When $\deg(\xi)=0$ this is basic linear algebra, 
and the general case now follows by induction on $n:=\deg(\xi)$: a direct computation shows that
\begin{align}\label{m-and-diff}
m^*(\rdiff_g(\xi))=\sum_{i=0}^m \xi_i \tens \rdiff_g\eta_i 
\end{align}
so the induction hypothesis gives that
\begin{align}\label{eq:deg-og-diff}
\deg(\xi_i) \dotplus \deg(\rdiff_g\eta_i)\leq \deg(\rdiff_g\xi)\leq \deg(\xi)-1=n-1, \ \text{ for all } g\in G.
\end{align}
For each non-constant $\eta_i$ there exists a $g\in G$ such that $\deg(\rdiff_g\eta_i)=\deg(\eta_i)-1\geq 0$ and hence
\[
\deg(\xi_i) \plusdot \deg(\rdiff_g\eta_i)= \deg(\xi_i) + \deg(\eta_i)-1\leq n-1.
\]
Thus, for those $i$ we have $\deg(\xi_i) \plusdot \deg(\eta_i)\leq n$ and $\deg(\xi_i)\leq n-1$. We may write
\[
\xi(g)=m^*(\xi)(g,\bbb) =\sum_{i: \deg(\eta_i)>0} \xi_i(g)\eta_i(\bbb) +\sum_{i:\deg(\eta)\leq 0} \xi_i(g) \eta_i(\bbb),
\]
and  since $\xi$ can be uniquely expressed as a linear combination of the elements $\{\xi_i \mid \deg(\xi_i)\leq n\}$, if $\deg(\xi_i)> n$ for some $i$, then $\eta_i(\bbb)=0$ and  $\eta_i$ is constant by \eqref{eq:deg-og-diff}; thus in this case $\deg(\xi_i) \dotplus \deg(\eta_i)=-\infty \leq n$ which proves the claim.
To obtain that $\deg(m^*(\xi))=\deg(\xi)$, we first show that when $\xi$ is a unital polynomial then the pull back takes the form
\begin{equation} \label{eq:m-tilde_form}
m^*(\xi)=1\tens \xi+\sum_{i}\xi_i\tens \xi'_i + \xi\tens 1,
\end{equation}
where $\xi_i,\xi_i'$ are unital, non-constant polynomials with $\deg(\xi_i) + \deg(\xi_i')\leq \deg(\xi)$. To see this, we expand $m^*(\xi)=\sum_{i} \xi_i\tens \eta_i$ according to the basis $(\xi_i)_{i}$ chosen above and, by what was just proven, this means  that $\deg(\xi_i) \plusdot \deg(\eta_i)\leq \deg(\xi)$. Then write $\eta_i=\xi_i'+r_i1$ with $\xi_i'$ unital and $r_i\in \RR$, and note that since $\xi$ and $(\xi_i)_{i>0}$ are unital and $\xi_0=1$ we have 
\begin{align*}
0 &=\xi(\bbb)=m^*(\xi)(\bbb,\bbb)=\eta_0(\bbb)=r_0;\\
\xi(g)&=m^*(\xi)(\bbb, g)=\eta_0(g) +\sum_{i>0}\xi_i(\bbb)\eta_i(g)=\eta_0(g);\\
\xi(g) &= m^*(\xi)(g,\bbb)=1\tens \eta_0(\bbb) +\sum_{i>0} \xi_i(g)\xi_i'(\bbb) +\sum_{i>0} \xi_i(g)r_i =\sum_{i>0} \xi_i(g)r_i 
\end{align*}
Thus, 
\begin{align*}
m^*(\xi) = 1\tens \eta_0 + \sum_{i>0} \xi_i\tens \xi_i' + \sum_{i>0} r_i\xi_i \tens \bbb =  1 \tens \xi + \sum_{i>0} \xi_i\tens \xi_i' + \xi \tens \bbb,
\end{align*}
and restricting the last sum to those $i$ for which $\xi_i'\neq 0$ we get the decomposition \eqref{eq:m-tilde_form}.
From \eqref{eq:m-tilde_form} we see that $m^*(\xi)(g, \bbb)=\xi(g)$ and hence that $\deg(m^*(\xi))\geq \deg(\xi)$ as desired.
\end{proof}

By  Proposition \ref{prop:mult-preserves-poly}, the multiplication map $m\colon G\times G\to G$ dualizes to a degree preserving map at the level of polynomial algebras, and hence the following definition makes sense.

\begin{definition}[co-multiplicativity] \label{def:comultiplicativity}
Let $G,H$ be  Mal'cev groups. We say that a linear map $\Psi\colon \operatorname{Pol}(G) \rightarrow \operatorname{Pol}(H)$ is \emph{co-multiplicative} if the following diagram commutes:
\begin{equation} \label{eq:pol_iso_uniqueness_diagram}
\xymatrix{ \operatorname{Pol}(G) \ar[rr]^{m^*} \ar[d]_{\Psi} & & \operatorname{Pol}(G^2) \ar[d]^{\Psi \otimes \Psi} \\ \operatorname{Pol}(H) \ar[rr]^{m^*} & & \operatorname{Pol}(H^2) }. 
\end{equation}

\end{definition}

\begin{rem}
By Lemma \ref{lem: pol-of-product-grps} and Proposition \ref{prop:mult-preserves-poly}, the multiplication map $m\colon G\times G \to G$ on a Mal'cev group dualizes to a (degree preserving) map $m^*\colon \Pol(G) \to \Pol(G) \tens \Pol(G)$ and it is now straight forward to check that $\Pol(G)$ is a commutative Hopf-algebra with comultiplication $m^*$, antipode $\inv^*$ and counit $\ev_\bbb$.  Note also, that in this terminology a strongly unital, comultiplicative algebra homomorphism between polynomial algebras is nothing but a morphism in the category of Hopf algebras. In what follows we will therefore stick to the  Hopf-algebraic language and use the term `morphism of Hopf algebras' rather than the more ad hoc terminology `strongly unital, co-multiplicative, %degree-preserving 
algebra homomorphism'.
\end{rem}

\begin{lemma} \label{lma:pol_iso_uniqueness}
Let $G$ and $H$ be %residually 
Mal'cev groups and let $\Psi_1,\Psi_2 \colon \operatorname{Pol}(G) \rightarrow \operatorname{Pol}(H)$ be Hopf algebra homomorphisms.
 Let $(g_{i,j})$ be a Mal'cev basis for $G$ and $(h_{k,l})$ be a Mal'cev basis for $H$ and suppose that for all $\ell=1,\dots ,\operatorname{rk}(H)$ and all $i,j$ we have
\begin{equation} \label{eq:pol_iso_uniqueness_assumption}
(\Psi_1 \zeta_{g_{i,j}})(h_{1,\ell}) = (\Psi_2 \zeta_{g_{i,j}})(h_{1,\ell}).
\end{equation}
Then $\Psi_1 = \Psi_2$.
\end{lemma}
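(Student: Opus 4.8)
We have two strongly unital, co-multiplicative, degree-preserving algebra homomorphisms $\Psi_1,\Psi_2\colon \operatorname{Pol}(G)\to\operatorname{Pol}(H)$ that agree on the generators $\zeta_{g_{i,j}}$ when evaluated against the degree-one Mal'cev basis elements $h_{1,\ell}$. I want to conclude $\Psi_1=\Psi_2$.

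Let me think carefully about how the various structural hypotheses on $\Psi_k$ should be used.

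First, the **strategy of reduction**: Since by Lemma \ref{lma:pol_Malcev_description} the monomials $\zeta_{\mathbf{d}}$ form a basis of $\operatorname{Pol}(G)$, and since both $\Psi_k$ are *algebra homomorphisms*, once I know $\Psi_1\zeta_{g_{i,j}} = \Psi_2\zeta_{g_{i,j}}$ for every individual generator $\zeta_{g_{i,j}}$, I immediately get $\Psi_1 = \Psi_2$ everywhere (multiplicativity extends equality on generators to equality on all products, hence on all of $\operatorname{Pol}(G)$). So the whole problem reduces to: **show $\Psi_1\zeta_{g_{i,j}} = \Psi_2\zeta_{g_{i,j}}$ for each fixed $(i,j)$.** Call this polynomial $\xi^{(k)} := \Psi_k\zeta_{g_{i,j}} \in \operatorname{Pol}(H)$.

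Now I have the uniqueness tools for individual polynomials on $H$. By Lemma \ref{lma:pol_determined_diff}, a polynomial on $H$ is determined by its value at $\bbb$ together with the right-differences $\rdiff_{h_{1,\ell}}(\cdot)$ for all $\ell=1,\dots,\operatorname{rk}(H)$. And these right-differences are themselves polynomials of strictly smaller degree (Lemma \ref{lma:pol_diff_degree}: $\deg \rdiff_{h_{1,\ell}}\xi \le \deg\xi \minusdot 1$, since $\deg h_{1,\ell}=1$). This is exactly the structure that invites an induction on degree.

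**The plan.** I would prove, by induction on $d = \deg\zeta_{g_{i,j}} = i$, that $\Psi_1\zeta_{g_{i,j}} = \Psi_2\zeta_{g_{i,j}}$ for every basis element of degree $d$. (More precisely, since products of low-degree generators can have higher degree, I phrase the induction as: for all $\xi\in\operatorname{Pol}(G)$ with $\xi(\bbb)=0$ and $\deg\xi\le d$, one has $\Psi_1\xi = \Psi_2\xi$; the strongly unital hypothesis handles the constants, so it suffices to treat unital $\xi$.) The base case $d=0$ is the constant polynomials, handled by unitality. For the inductive step I fix $\xi=\zeta_{g_{i,j}}$ with $\deg\xi=d$ and apply Lemma \ref{lma:pol_determined_diff} to the two polynomials $\xi^{(1)},\xi^{(2)}\in\operatorname{Pol}(H)$: it suffices to check that $\xi^{(1)}(\bbb)=\xi^{(2)}(\bbb)$ and that $\rdiff_{h_{1,\ell}}\xi^{(1)} = \rdiff_{h_{1,\ell}}\xi^{(2)}$ for all $\ell$. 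The value at the identity is equal by the strongly unital property (both equal $0$, since $\zeta_{g_{i,j}}(\bbb)=0$). The value $\xi^{(1)}(h_{1,\ell}) = \xi^{(2)}(h_{1,\ell})$ is precisely the hypothesis \eqref{eq:pol_iso_uniqueness_assumption}. So everything comes down to matching the right-differences.

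**The key computation — and the main obstacle.** The crux is to express $\rdiff_{h_{1,\ell}}\Psi_k(\xi)$ in terms of data to which the induction hypothesis applies. This is exactly where co-multiplicativity enters. The right-difference $\rdiff_{h}\Psi_k(\xi)(s) = \Psi_k(\xi)(sh) - \Psi_k(\xi)(s)$ is read off from $m^*(\Psi_k\xi)$ evaluated at $(s,h)$, and co-multiplicativity identifies $m^*(\Psi_k\xi)$ with $(\Psi_k\otimes\Psi_k)(m^*\xi)$. Using the explicit form \eqref{eq:m-tilde_form} of the comultiplication, $m^*\xi = \xi\otimes\bbb + \sum_i \xi_i\otimes\xi_i' + \bbb\otimes\xi$ with every $\xi_i,\xi_i'$ of degree strictly less than $d$, applying $\Psi_k\otimes\Psi_k$ and evaluating the second variable at $h=h_{1,\ell}$ expresses $\rdiff_{h_{1,\ell}}\Psi_k(\xi)$ as a sum $\sum_i \Psi_k(\xi_i)\cdot \big(\Psi_k(\xi_i')(h_{1,\ell})\big) + \Psi_k(\xi)(h_{1,\ell})\cdot\bbb$, where each $\xi_i,\xi_i'$ has degree $<d$. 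The scalar factors $\Psi_k(\xi_i')(h_{1,\ell})$ agree for $k=1,2$ by the induction hypothesis applied to the lower-degree $\xi_i'$ (these are unital of smaller degree), and the polynomial factors $\Psi_k(\xi_i)$ agree for the same reason. The residual term $\Psi_k(\xi)(h_{1,\ell})$ is handled by \eqref{eq:pol_iso_uniqueness_assumption} together with the fact that $\xi=\zeta_{g_{i,j}}$ is a generator. Hence $\rdiff_{h_{1,\ell}}\Psi_1(\xi) = \rdiff_{h_{1,\ell}}\Psi_2(\xi)$, and Lemma \ref{lma:pol_determined_diff} closes the induction. The delicate bookkeeping — the genuine obstacle — is verifying that every factor appearing after comultiplication really has degree strictly smaller than $d$ (so that the induction hypothesis is available), which is precisely guaranteed by the last sentence of the remark following Definition \ref{def:comultiplicativity}: for a simple basis element $\xi=\zeta_z$ one has $\deg\xi_i,\deg\xi_i' < \deg\xi$. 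One should double-check that degree-preservation of $\Psi_k$ is what keeps all these images inside the right filtration level so the induction is well-founded.
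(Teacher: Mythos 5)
Your proof is correct, and its skeleton coincides with the paper's: induction on degree, expansion of co-multiplicativity via the decomposition $m^*\xi = \xi\otimes\bbb + \sum_i\xi_i\otimes\xi_i' + \bbb\otimes\xi$ with $\deg\xi_i,\deg\xi_i' < \deg\xi$, strong unitality to pin down the value at $\bbb$, and the hypothesis \eqref{eq:pol_iso_uniqueness_assumption} to handle the residual generator term. Where you diverge is the finishing move. The paper specializes \emph{both} tensor legs to group elements: from $\Psi_1(\xi)(hk) = (\Psi_1\otimes\Psi_1)(m^*\xi)(h,k)$ it deduces that agreement of $\Psi_1\xi$ and $\Psi_2\xi$ at $h$ and at $k$ propagates to agreement at $hk$, hence from the $h_{1,\ell}$ to all powers $h_{1,\ell}^n$ and then to the whole subgroup $H_0$ of words in the $h_{1,\ell}$, and concludes with Lemma \ref{lma:pol_determined_subgroup}. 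You instead keep the first leg free and specialize only the second to $h_{1,\ell}$, obtaining an identity of polynomials $\rdiff_{h_{1,\ell}}\Psi_1\xi = \rdiff_{h_{1,\ell}}\Psi_2\xi$, and conclude with Lemma \ref{lma:pol_determined_diff}. The two routes are logically adjacent (the paper derives Lemma \ref{lma:pol_determined_diff} from Lemma \ref{lma:pol_determined_subgroup}), but yours is slightly more streamlined, avoiding the power-and-word propagation, while the paper's makes visible exactly how agreement spreads through the group. A genuine merit of your write-up is the explicit reduction to generators at the outset via the algebra-homomorphism property and Lemma \ref{lma:pol_Malcev_description}: the paper's induction is phrased for arbitrary $\xi$ even though hypothesis \eqref{eq:pol_iso_uniqueness_assumption} only constrains the $\zeta_{g_{i,j}}$, so this reduction is tacitly needed there too, and you supply it.
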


\begin{proof}
We show that $\Psi_1(\xi) = \Psi_2(\xi)$ by induction on $d:=\deg \xi$. The case $d=0$ is clear and the case $d=1$ follows directly from the hypotheses 
using Theorem \ref{thm:pol_Malcev_description}. Let $d> 1$ be given. Suppose that $(\Psi_1 \xi)(h) = (\Psi_2 \xi)(h)$ and $(\Psi_1 \xi)(k) = (\Psi_2 \xi)(k)$ for some $h,k\in H$. Writing 
$m^*(\xi)=1\tens \xi+\sum_{i}\xi_i\tens \xi'_i + \xi\tens 1$ as in \eqref{eq:m-tilde_form}, the induction hypothesis gives:
\begin{align*}
\Psi_1(\xi)(hk) & = m^*(\Psi_1(\xi))( h,k ) \\
 & = (\Psi_1\otimes \Psi_1)(m^*(\xi))( h, k ) \\
 & = \sum_{i} \Psi_1(\xi_{i})( h ) \cdot \Psi_1(\xi_{i}')( k ) + \Psi_1(\xi)(h) + \Psi_1(\xi)(k)  \\
 & = \sum_{i} \Psi_2(\xi_{i})( h ) \cdot \Psi_2(\xi_{i}')( k ) + \Psi_2(\xi)(h) + \Psi_2(\xi)(k) \\
 & = (\Psi_2\otimes \Psi_2)(m^*(\xi))( h, k ) \\
 & = \Psi_2(\xi)(hk).
\end{align*}
Using this computation repeatedly,  the assumption \eqref{eq:pol_iso_uniqueness_assumption} implies that $(\Psi_1\xi)(h) = (\Psi_2\xi)(h)$ for all words in  $h_{1,\ell}$, and by Lemma \ref{lma:pol_determined_subgroup} it follows that $\Psi_1\xi = \Psi_2\xi$.
\end{proof}

Observe that if $\varphi \colon H\rightarrow G$ is a homomorphism then it induces a  (degree-preserving)  homomorphism $\varphi^* \colon \operatorname{Pol}(G) \rightarrow \operatorname{Pol}(H)$ of Hopf algebras. The next result gives a converse to this, in the spirit that $\operatorname{Pol}(G)$ acts as a ``total'' dual space of $G$.
As mentioned already, by using the identification of $\Pol(G)$ with the algebra of regular functions the result can also be deduced from classical results in the theory of algebraic groups \cite[Chapter 1]{Borel-GTM}.

\begin{theorem} \label{thm:hom_existence}
Let $G$ and $H$ be %residually Mal'cev groups
csc nilpotent Lie groups
 and suppose that $\Psi \colon \operatorname{Pol}(G) \rightarrow \operatorname{Pol}(H)$ is a  Hopf algebra homomorphism. Then there is a unique continuous group homomorphism $\varphi\colon H \rightarrow G$ such that $\Psi$ is induced by $\varphi$. Further, $\varphi$ is an isomorphism if and only if $\Psi$ is %a properly degree-preserving isomorphism.
bijective.
\end{theorem}

\begin{proof}
 Fix Mal'cev bases $\{ g_{i,j}\}$ and  $\{ h_{i,j}\}$ for $G$ and $H$, respectively, and let $F$ be the free csc nilpotent Lie group of class $\operatorname{cl}(F) = \max \{ \operatorname{cl}(G), \operatorname{cl}(H) \}$ with $\operatorname{rk}(H)$ generators $f_{1,1},\dots ,f_{1,\operatorname{rk}(H)}$. Then there 
 are unique Lie group homomorphisms $\varphi_H\colon F\to H$ and $\varphi_G\colon F\to G$ with closed images,  defined on the generators by
 \begin{align*}
 \varphi_H(f_{1,\ell}) &:=h_{1,\ell}\\
 \varphi_G(f_{1,\ell}) &:= \prod_{(i,j)\in \Bultiindex{B}{\multirank(G)}} g_{i,j}^{(\Psi \zeta_{g_{i,j}})(h_{1,\ell})}
 \end{align*}
 Moreover, since $\varphi_H$ arises as the predual of a surjective map at the level of Lie algebras, $\varphi_H$ is also surjective and
 %the surjectivity at Lie algebra level follows from the construction (see corwin greenleaf) and then since exp is a diffeo the surjectivity of \varphi_H follows from the commutative diagram that can be found for instance in Warner's book
hence  induces an isomorphism $F/\ker(\varphi_H) \simeq H$. A direct computation shows that $(\varphi_H^* \circ \Psi)(\zeta_{g_{ij}})(f_{1l}) =\varphi_G^*(\zeta_{g_{ij}}) (f_{1l}) $ and thus, by Lemma \ref{lma:pol_iso_uniqueness}, we get $\varphi_H^*\circ \Psi=\varphi_G^*$. Let $f\in F$ be in $\ker(\varphi_H)$. Then for every $\zeta\in \Pol^0(G)$ we have, since $\Psi$ is assumed strongly unital, that
\[
0=\Psi(\zeta) (\bbb)=\Psi(\zeta) (\varphi_H(f))= (\varphi_H^*\circ \Psi)(\zeta)(f)=\varphi_G^*(\zeta)(f)=\zeta(\varphi_G(f)),
\]
and since $\Pol^{0}(G)$ separates points in $G$ (Theorem \ref{thm:pol_Malcev_description}) we conclude that $\varphi_G(f)=\bbb$. Thus $\varphi_G$ induces a map $\bar{\varphi}_G\colon F/\ker(\varphi_H)\to G$ and we therefore obtain a homomorphism
\[
\varphi\colon H \simeq F/\ker(\varphi_H)\overset{\bar{\varphi}_G}{\To} G
\]
Note that, since $\varphi_G$ has closed image the same is true for $\varphi$. By construction we have $\varphi(h_{1l})=\varphi_G(f_{1l})$ and a direct computations now shows that $\varphi^*(\zeta_{g_{ij}})(h_{1l})=\Psi(\zeta_{g_{ij}})(h_{1l})$ for all $i,j$ and $l$ and by Lemma \ref{lma:pol_iso_uniqueness} we conclude that $\varphi^*=\Psi$.  This also proves the uniqueness of $\varphi$, because if $\psi$ were another homomorphism predual to $\Psi$ then for every $h\in H$ and every $\xi\in \Pol(G)$ we have
\[
\xi(\psi(h))=\psi^*(\xi)(h)=\Psi(\xi)(h)=\varphi^*(\xi)(h)=\xi(\varphi(h)),
\]
and since $\Pol(G)$ separates points in $G$ we conclude that $\varphi(h)=\psi(h)$. If $\Psi$ is moreover assumed bijective, then $\Psi^{-1}$ is also a Hopf algebra homomorphism and is therefore induced by a unique group homomorphism $\psi\colon H\to G$. Again by the uniqueness of the homomorphism, it follows that $\psi\circ \varphi=\id_G$ and $\psi\circ\varphi=\id_H$.
\end{proof}

\begin{rem}
If $G$ and $H$ are Mal'cev groups then by Theorem \ref{thm:pol_Malcev_description}, the restriction map $\Pol(G\tens \RR) \to \Pol(G)$ is a (degree preserving) Hopf algebra isomorphism.  Thus, if $\Psi\colon \Pol(G) \to \Pol(H)$ is a Hopf algebra homomorphism then, by Theorem \ref{thm:hom_existence}, it is induced by a group homomorphism $\psi\colon H\otimes \RR \to G\otimes \RR$ at the level of Mal'cev completions which is an isomorphism exactly when $\Psi$ is bijective.
\end{rem}

% bibdiv kræver at amsref- er loaded
\begin{bibdiv} 
\begin{biblist}

%\bib{Alaoglu-Birkhoff:Decomposition}{article}{
%	AUTHOR = {{Leonidas  Alaoglu} and {Garrett Birkhoff Birkhoff}},
%	TITLE = {General ergodic theorems},
%	JOURNAL = {Ann. Math.},
%	VOLUME = {41},
%	YEAR = {1940},
%	PAGES = {293--309},
%}

%\bib{Austin:Integrable_measure_equivalence}{article}{
%	AUTHOR = {Austin, Tim},
%	TITLE = {Integrable measure equivalence for groups of polynomial growth},
%	YEAR = {2013},
%	NOTE = {Preprint},
%}

\bib{Babakhanian:Cohomological_Methods}{book}{
  AUTHOR = {Babakhanian, Ararat},
  TITLE = {Cohomological Methods in Group Theory},
  YEAR = {1972},
}

%\bib{Bader-Furman-Sauer:Integrable_measure_equivalence}{article}{
%	AUTHOR = {{Uri Bader} and {Alex Furman} and {Roman Sauer}},
%	TITLE = {Integrable measure equivalence and rigidity of hyperbolic lattices},
%	JOURNAL = {Invent. Math.},
%	VOLUME = {194},
%	YEAR = {2013},
%	PAGES = {313--379},
%}

\bib{Bader-Rosendal-Sauer:Vanishing}{article}{
	AUTHOR = {{Uri Bader} and {Christian Rosendal} and {Roman Sauer}},
	TITLE = {On the cohomology of weakly almost periodic group representations},
	JOURNAL = {J. Topology and Analysis},
	VOLUME = {6},
	YEAR = {2014},
	PAGES = {153--165},
}

\bib{Baumslag:Nilpotent}{book}{
	AUTHOR = {Gilbert Baumslag},
	TITLE = {Lecture Notes on Nilpotent Groups},
	YEAR = {1971},
}

\bib{BHV}{book}{
	AUTHOR = {Bachir Bekka and Pierre de la Harpe and Alain Valette},
	TITLE = {Kazhdan's property ({T})},
	PUBLISHER = {Cambridge University Press, Cambridge},
	    SERIES = {New Mathematical Monographs},
    VOLUME = {11},
    YEAR={2008}
	
}

\bib{Blanc:Cohomologie}{article}{
	AUTHOR = {Philippe Blanc},
	TITLE = {Sur la cohomologie continue des groupes localement compacts},
	JOURNAL = {Ann. Sci. \'{E}cole Norm. Sup.},
	VOLUME = {12},
	YEAR = {1979},
	PAGES = {137--168},
}

\bib{Borel-GTM}{book}{
	AUTHOR = {Armand Borel},
	TITLE = {Linear algebraic groups},
	   SERIES = {Graduate Texts in Mathematics},
    VOLUME = {126},
   EDITION = {Second},
 PUBLISHER = {Springer-Verlag, New York},
	YEAR = {1991},
}

\bib {Buckley-1}{article}{
    AUTHOR = {Buckley, Joseph T.},
     TITLE = {Polynomial functions and wreath products},
   JOURNAL = {Illinois J. Math.},
  FJOURNAL = {Illinois Journal of Mathematics},
    VOLUME = {14},
      YEAR = {1970},
     PAGES = {274--282},
      ISSN = {0019-2082},
   MRCLASS = {20.52},
  MRNUMBER = {0258964},
MRREVIEWER = {D. E. Cohen},
       URL = {http://projecteuclid.org/euclid.ijm/1256053186},
}

%\bib{Bridson-Reid}{article}{
%	AUTHOR = {{M.R. Bridson} and {A.W. Reid}},
%	TITLE = {Nilpotent completions of groups, {G}rothendieck pairs, and four problems of {B}aumslag},
%	YEAR = {2012},
%	NOTE = {Preprint},
%}

%\bib{Cornulier:Asymptotic_cones}{article}{
%	AUTHOR = {Yves Cornulier},
%	TITLE = {Asymptotic cones of {L}ie groups and cone equivalences},
%	JOURNAL = {Illinois J. Math.},
%	VOLUME = {55},
%	YEAR = {2011},
%	PAGES = {237--259},
%}

%\bib{Cornulier-de-la-Harpe:Metric_Geometry}{book}{
%	AUTHOR = {{Yves Cornulier} and {Pierre de la Harpe}},
%	TITLE = {Metric Geometry of Locally Compact Groups},
%	STATUS = {Book in preparation, arXiv:1403.3796},
%}

\bib{Corwin-Greenleaf}{book}{
	AUTHOR = {{Lawrence J. Corwin} and {Frederick P. Greenleaf}},
	TITLE = {Representations of nilpotent {L}ie groups and their applications, Part {$\mathrm{I}$}: Basic theory and examples},
	YEAR = {1990},
}

\bib{Delorme:1-Cohomologie}{article}{
	AUTHOR = {Patrick Delorme},
	TITLE = {{$1$}-{C}ohomologie des repr{\'e}sentations unitaires des groupes de {L}ie semi-simples et r{\'e}solubles. Produits tensoriels continus de repr{\'e}sentations},
	JOURNAL = {Bull. Soc. Math. France},
	VOLUME = {105},
	YEAR = {1977},
	PAGES = {281--336},
}

%\bib{Duchamp-Krob}{article}{
%	AUTHOR = {{G. Duchamp} and {D. Krob}},
%	TITLE = {The lower central series of the free partially commutative group},
%	JOURNAL = {Semigroup Forum},
%	VOLUME = {45},
%	YEAR = {1992},
%	PAGES = {385--394},
%}

%\bib{Falk-Randell:85}{article}{
%	AUTHOR = {{M. Falk} and {R.Randell}},
%	TITLE = {The lower central series of a fiber-type arrangement},
%	JOURNAL = {Invent. Math.},
%	VOLUME = {82},
%	YEAR = {1985},
%	PAGES = {77--88},
%}

%\bib{Falk-Randell:88}{article}{
%	AUTHOR = {{M. Falk} and {R.Randell}},
%	TITLE = {Pure braid groups and products of free groups},
%	book = {
%		TITLE = {Braids},
%		SERIES = {Contemporary Mathematics},
%		PUBLISHER = {A.M.S., Providence},
%	},
%	VOLUME = {78},
%	YEAR = {1988},
%	PAGES = {217--228},
%}

%\bib{Gromov:Asymptotic_invariants}{article}{
%	AUTHOR = {Mikhail Gromov},
%	TITLE = {Asymptotic invariants of infinite groups},
%	BOOK = {
%		TITLE = {Geometric Group Theory},
%		SERIES = {London Math. Soc. Lecture Note Ser.},
%		VOLUME = {182},
%		PUBLISHER = {Cambridge University Press},
%	},
%	YEAR = {1993},
%	PAGES = {1--295},
%}

\bib{Guichardet:Cohomologie}{book}{
	AUTHOR = {Alain Guichardet},
	TITLE = {Cohomologie des groupes topologiques et des alg{\`e}bres de Lie},
	YEAR = {1980},
}

\bib{Hochschild:Structure-of-Lie-grps}{book}{
    AUTHOR = {Hochschild, Gerhard.},
     TITLE = {The structure of {L}ie groups},
 PUBLISHER = {Holden-Day, Inc., San Francisco-London-Amsterdam},
      YEAR = {1965},
     PAGES = {ix+230},
   MRCLASS = {22.00 (57.40)},
  MRNUMBER = {0207883},
MRREVIEWER = {F. Hirzebruch},
}

%\bib{Hochschild:Relative}{article}{
%	AUTHOR = {Gerhard Hochschild},
%	TITLE = {Relative homological algebra},
%	JOURNAL = {Trans. Amer. Math. Soc.},
%	VOLUME = {82},
%	YEAR = {1956},
%	PAGES = {246--269},
%}

\bib{Jennings:Residual_nilpotence}{article}{
	AUTHOR = {Stephen A.~Jennings},
	TITLE = {The group ring of a class of infinite nilpotent groups},
	JOURNAL = {Canad. J. Math.},
	VOLUME = {7},
	YEAR = {1955},
	PAGES = {169--187},
}

%\bib{Kechris}{book}{
%	AUTHOR = {Alexander S.~Kechris},
%	TITLE = {Classical descriptive set theory},
%	PUBLISHER = {Springer-Verlag, New York},
%	YEAR = {1995},
%	SERIES = {Graduate Texts in Mathematics},
	
%}

%\bib{Kappos:Cohomology}{article}{
%	AUTHOR = {Elias Kappos},
%	TITLE = {{$L^p$}-{C}ohomology for groups of type {$FP_n$}},
%	NOTE = {Preprint},
%}

\bib{Kirillov:Introduction}{book}{
	AUTHOR = {Alexander Kirillov},
	TITLE = {An Introduction to Lie Groups and Lie Algebras},
	PUBLISHER = {Cambridge University Press},
	YEAR = {2008},
	SERIES = {Cambridge Studies in Advanced Mathematics},
	VOLUME = {113},
}

%\bib{Lazard:groupes_nilpotents}{article}{
%	AUTHOR = {Michel Lazard},
%	TITLE = {Sur les groupes nilpotents et les anneaux de Lie},
%	JOURNAL = {Annales Scientifiques de {l'{\'E}.N.S.}},
%	YEAR = {1954},
%	VOLUME = {71},
%	PAGES = {101--190},
%}

\bib{Leibman:Polynomial_mappings}{article}{
	AUTHOR = {Alexander Leibman},
	TITLE = {Polynomial mappings of groups},
	JOURNAL = {Israel Journal of Mathematics},
	VOLUME = {129},
	YEAR = {2002},
	PAGES = {29--60},
	NOTE = {Erratum available},
}

%\bib{Magnus-Karrass-Solitar}{book}{
%	AUTHOR = {{W. Magnus}, {A. Karrass}, {D. Solitar}},
%	TITLE = {Combinatorial group theory: Presentations of groups in terms of generators and relations},
%	SERIES = {Pure and Applied Mathematics},
%	VOLUME = {13},
%	PUBLISHER = {Interscience, New York},
%	YEAR = {1966},
%}

\bib{Mal'cev}{article}{
	AUTHOR = {Anatoly Mal'cev},
	TITLE = {On a class of homogeneous spaces},
	JOURNAL = {AMS Translation},
	VOLUME = {39},
	YEAR = {1951},
	PAGES = {},
}

\bib{Raghunathan}{book}{
    AUTHOR = {Raghunathan, Madabusi  S.},
     TITLE = {Discrete subgroups of {L}ie groups},
      NOTE = {Ergebnisse der Mathematik und ihrer Grenzgebiete, Band 68},
 PUBLISHER = {Springer-Verlag, New York-Heidelberg},
      YEAR = {1972},
     PAGES = {ix+227},

}

\bib{Passi-I}{article}{
    AUTHOR = {Passi, Inder Bir S.},
     TITLE = {Polynomial maps on groups},
   JOURNAL = {J. Algebra},
  FJOURNAL = {Journal of Algebra},
    VOLUME = {9},
      YEAR = {1968},
     PAGES = {121--151},
      ISSN = {0021-8693},
   MRCLASS = {20.50},
  MRNUMBER = {0231915},
MRREVIEWER = {M. F. Newman},
       %DOI = {10.1016/0021-8693(68)90017-3},
       URL = {https://doi.org/10.1016/0021-8693(68)90017-3},
}

\bib{Passi-functor}{article}{
    AUTHOR = {Passi, Inder Bir S.},
     TITLE = {Polynomial functors},
   JOURNAL = {Proc. Cambridge Philos. Soc.},
    VOLUME = {66},
      YEAR = {1969},
     PAGES = {505--512},
   MRCLASS = {20.50},
  MRNUMBER = {0285613},
MRREVIEWER = {C. R. Leedham-Green},
      % DOI = {10.1017/s0305004100045254},
       URL = {https://doi.org/10.1017/s0305004100045254},
}

\bib{Passi-II}{article}{
    AUTHOR = {Passi, Inder Bir S.},
     TITLE = {Polynomial maps on groups. {II}},
   JOURNAL = {Math. Z.},
  FJOURNAL = {Mathematische Zeitschrift},
    VOLUME = {135},
      YEAR = {1973/74},
     PAGES = {137--141},
      ISSN = {0025-5874},
   MRCLASS = {20E15},
  MRNUMBER = {0364454},
MRREVIEWER = {A. Babakhanian},
    %   DOI = {10.1007/BF01189350},
       URL = {https://doi.org/10.1007/BF01189350},
}

%\bib{Rosendal:Automatic_continuity}{article}{
%	AUTHOR = {Christian Rosendal},
%	TITLE = {Automatic continuity of group homomorphisms},
%	JOURNAL = {Bulletin of Symbolic Logic},
%	VOLUME = {15},
%	NUMBER = {2},
%	YEAR = {2009},
%	PAGES = {184--214},
%}

%\bib{Puls:Cohomology}{article}{
%	AUTHOR = {Michael J. Puls},
%	TITLE = {On the first {$L^p$}-cohomology of some finitely generated groups and $p$-harmonic functions},
%	JOURNAL = {Journal of Funcitonal Analysis},
%	VOLUME = {237},
%	YEAR = {2006},
%	PAGES = {391--401},
%}

%\bib{Rudin:Real_and_Complex_Analysis}{book}{
%	AUTHOR = {Walter Rudin},
%	TITLE = {Real and Complex Analysis},
%	YEAR = {1970},
%	PUBLISHER = {McGraw-Hill},
%}

%\bib{Rudin:Functional_Analysis}{book}{
%	AUTHOR = {Walter Rudin},
%	TITLE = {Functional Analysis},
%	EDITION = {Second edition},
%	YEAR = {1991},
%	PUBLISHER = {McGraw-Hill},
%}

%\bib{Sauer:Homological_invariants}{article}{
%	AUTHOR = {Roman Sauer},
%	TITLE = {Homological invariants and quasi-isometry},
%	JOURNAL = {Geom. Funct. Anal. (GAFA)},
%	VOLUME = {16},
%	YEAR = {2006},
%	PAGES = {476--515},
%}

%\bib{Schaefer:Topological_Vector_Spaces}{book}{
%	AUTHOR = {H.H. Schaefer},
%	TITLE = {Topological Vector Spaces},
%	YEAR = {1971},
%	PUBLISHER = {Springer-Verlag},
%}

\bib{Shalom:Harmonic}{article}{
	AUTHOR = {Yehuda Shalom},
	TITLE = {Harmonic analysis, cohomology, and large-scale geometry of amenable groups},
	JOURNAL = {Acta Math.},
	VOLUME = {192},
	YEAR = {2004},
	PAGES = {119--185},
}

\bib{Szekelyhidi}{article}{
    AUTHOR = {Sz\'{e}kelyhidi, L\'{a}szl\'{o}},
     TITLE = {Note on exponential polynomials},
   JOURNAL = {Pacific J. Math.},
  FJOURNAL = {Pacific Journal of Mathematics},
    VOLUME = {103},
      YEAR = {1982},
    NUMBER = {2},
     PAGES = {583--587},
      ISSN = {0030-8730},
   MRCLASS = {43A15 (22A10 22D10 43A60 43A65)},
  MRNUMBER = {705251},
MRREVIEWER = {H. G\"{u}nzler},
       URL = {http://projecteuclid.org.proxy1-bib.sdu.dk:2048/euclid.pjm/1102723984},
}

%\bib{Balazs:On_higher_order_Fourier_analysis}{article}{
%	AUTHOR = {Bal{\'a}zs Szegedy},
%	TITLE = {On higher order Fourier analysis},
%	STATUS = {preprint},
%	NOTE = {arXiv:1203.2260},
%}

%\bib{Tao:Higher_order_Fourier_analysis}{book}{
%	AUTHOR = {Terence Tao},
%	TITLE = {Higher order Fourier analysis},
%	SERIES = {Graduate studies in mathematics},
%	VOLUME = {142},
%	PUBLISHER = {American Mathematical Society},
%	YEAR = {2012},
%}

%\bib{vanEstI}{article}{
%	AUTHOR = {Willem T. van Est},
%	TITLE = {On the algebraic cohomology concept in {L}ie groups, {$\mathrm{I,II}$}},
%	JOURNAL = {Proceeding of the {K}inklijke {N}ederlandse {A}kademie van {W}etenschappen. Series {A}: {I}ndagationes {M}athematica},
%	YEAR = {1953},
%	VOLUME = {56},
%	PAGES = {484--504},
%}

%\bib{vanEstII}{article}{
%	AUTHOR = {Willem T. van Est},
%	TITLE = {On the algebraic cohomology concept in {L}ie groups, {$\mathrm{I,II}$}},
%	JOURNAL = {Proceeding of the {K}inklijke {N}ederlandse {A}kademie van {W}etenschappen. Series {A}: {I}ndagationes {M}athematica},
%	YEAR = {1955},
%	VOLUME = {58},
%	PAGES = {225--233,286--294},
%}

\bib{Wang:Discrete_subgroups}{article}{
	AUTHOR = {Hsien-Chung Wang},
	TITLE = {Discrete subgroups of solvable {L}ie groups, {$\mathrm{I}$}},
	JOURNAL = {Annals of Mathematics},
	VOLUME = {64},
	YEAR = {1956},
	PAGES = {1--19},
}

%\bib{Willis:Nilpotent_td}{article}{
%	AUTHOR = {George Willis},
%	TITLE = {Totally disconnected, nilpotent, locally compact groups},
%	JOURNAL = {Bulletin of the Australian Mathematical Society},
%	VOLUME = {55},
%	YEAR = {1997},
%	PAGES = {143--146},
%}

%\bib{Zimmer:Ergodic_theory}{book}{
%	AUTHOR = {Robert J.~Zimmer},
%	TITLE = {Ergodic theory and semisimple groups},
%	SERIES = {Monographs in Mathematics},
%	VOLUME = {81},
%	PUBLISHER = {Birkh\"auser Verlag, Basel},
%	YEAR = {1984},
%}

\end{biblist}
\end{bibdiv}

\end{document}